\newtheorem{theorem}{Theorem}
\newtheorem{lemma}[theorem]{Lemma}
\newtheorem{proposition}[theorem]{Proposition}
\newtheorem{lettertheorem}{Theorem}
\newtheorem{letterlemma}[lettertheorem]{Lemma}
\theoremstyle{definition}
\theoremstyle{remark}
\numberwithin{equation}{section}
\newcommand{\D}{\mathbb{D}}
\newcommand{\DD}{\widehat{\mathcal{D}}}
\newcommand{\Dd}{\widecheck{\mathcal{D}}}
\newcommand{\DDD}{\mathcal{D}}
\newcommand{\N}{\mathbb{N}}
\newcommand{\C}{\mathbb{C}}
\newcommand{\fgrnu}{\widehat{f}_{r,\nu}}
\renewcommand{\phi}{\varphi}
\def\BMO{\mathord{\rm BMO}}
\def\MO{\mathord{\rm MO}}
\def\BO{\mathord{\rm BO}}
\def\BA{\mathord{\rm BA}}
\newcommand{\Bg}{\mathcal{B}_{d\gamma}}
\def\a{\alpha}       \def\b{\beta}        \def\g{\gamma}
\def\d{\delta}           
     \def\om{\omega}      
\def\s{\sigma}              
         \def\r{\rho}         \def\z{\zeta}
                  \def\vp{\varphi}
\def\G{\Gamma}
\renewcommand{\H}{\mathcal{H}}
\newenvironment{Prf}{\noindent{\emph{Proof of}}}
{\hfill$\Box$ }
\begin{document}

\title[Hankel operators induced by radial Bekoll\'e-Bonami weights]{Hankel operators induced by radial Bekoll\'e-Bonami weights on  Bergman spaces}

\keywords{Hankel operator, Bekoll\'e-Bonami weight, Bergman space,  Bergman projection, doubling weight.}

\thanks{This research was supported in part by Ministerio de Econom\'{\i}a y Competitivivad, Spain, projects
MTM2014-52865-P and MTM2015-69323-REDT; La Junta de Andaluc{\'i}a,
project FQM210; Academy of Finland project no. 268009. AP acknowledges financial support from the Spanish Ministry of Economy and Competitiveness, through the Mar\'ia de Maeztu Programme for Units of Excellence in R\&D (MDM-2014-0445), Academy of Finland project no. 268009, and the grant MTM2017-83499-P (Ministerio de Educaci\'on y Ciencia).}

\author{Jos\'e \'Angel Pel\'aez}
\address{Departamento de An\'alisis Matem\'atico, Universidad de M\'alaga, Campus de
Teatinos, 29071 M\'alaga, Spain} \email{japelaez@uma.es}

\author[Antti Per\"{a}l\"{a}]{Antti Per\"{a}l\"{a}}
\address{Antti Per\"{a}l\"{a} \\Departament de Matem\`{a}tiques i Inform\`{a}tica,
Universitat de Barcelona, 08007 Barcelona, Catalonia, Spain,
Barcelona Graduate School of Mathematics (BGSMath).} \email{perala@ub.edu}

\author[Jouni R\"atty\"a]{Jouni R\"atty\"a}
\address{University of Eastern Finland\\
Department of Physics and Mathematics\\
P.O.Box 111\\FI-80101 Joensuu\\
Finland}
\email{jouni.rattya@uef.fi}

\subjclass[2010]{Primary 47B35; Secondary 32A36}

\date{\today}
\maketitle

\begin{abstract}
We study big Hankel operators $H_f^\nu:A^p_\omega \to L^q_\nu$ generated by radial Bekoll\'e-Bonami weights $\nu$, when $1<p\leq q<\infty$. Here the radial weight $\omega$ is assumed to satisfy a two-sided doubling condition, and $A^p_\omega$ denotes the corresponding weighted Bergman space. A characterization for simultaneous boundedness of $H_f^\nu$ and $H_{\overline{f}}^\nu$ is provided in terms of a general weighted mean oscillation. Compared to the case of standard weights that was recently obtained by Pau, Zhao and Zhu (Indiana Univ. Math. J. 2016), the respective spaces depend on the weights $\omega$ and $\nu$ in an essentially stronger sense. This makes our analysis deviate from the blueprint of this more classical setting. As a consequence of our main result, we also study the case of anti-analytic symbols.
\end{abstract}

\section{Introduction and main results}

Let $\H(\D)$ denote the space of analytic functions in the unit disc $\D=\{z\in\C:|z|<1\}$.
A function $\om:\D\to[0,\infty)$, integrable over the unit disc $\D$, is called a weight. It  is radial if $\om(z) = \om(|z|)$ for all $z\in\D$.
 For $0<p<\infty$ and a weight $\om$, the Lebesgue space $L^p_\om$ consists of (equivalence classes of) complex-valued measurable functions $f$ in $\D$ such that
    $$
    \|f\|_{L^p_\om}=\left(\int_\D|f(z)|^p\om(z)\,dA(z)\right)^{\frac1p}<\infty,
    $$
where $dA(z) = dx\,dy/\pi$ denotes the normalized Lebesgue area measure on $\D$. The weighted Bergman space $A^p_\om$ is the space of analytic functions in $L^p_\om$. As usual, $A^p_\alpha$ denotes the weighted Bergman
space induced by the standard radial weight $(\alpha+1)(1-|z|^2)^\alpha$.
 If $\nu$ is a radial weight then $A^2_\nu$ is a closed subspace of $L^2_\nu$ and  the orthogonal projection from $L^2_\nu$ to $A^2_\nu$ is given by
    $$
    P_\nu(f)(z) = \int_\D f(\zeta)\overline{B^\nu_z(\zeta)}\nu(\zeta)\,dA(\zeta),\quad z\in\D,
    $$
where $B^\nu_z$ are the reproducing kernels of $A^2_\nu$; $f(z)=\langle f, B^\nu_z\rangle_{A^2_\nu}$ for all $z\in\D$ and $f\in A^2_\nu$.

The study of the boundedness of weighted Bergman projections on $L^p$-spaces is a compelling topic that has attracted a considerable amount of attention during the last decades. A  well known result due to Bekoll\'e and Bonami~\cite{Bek,BB} describes the weights $\om$ such that the Bergman projection $P_\eta$, induced by the standard weight $(\eta+1)(1-|z|^2)^\eta$, is bounded on $L^q_\om$ for $1<q<\infty$. We denote this class of weights by $B_q(\eta)$, and write $B_q=\cup_{\eta>-1}B_q(\eta)$ for short. In the case of a standard weight, the Bergman reproducing kernels are given by the neat formula $(1-\overline{z}\zeta)^{-(2+\eta)}$. However, for a general radial weight $\nu$ the Bergman reproducing kernels $B^\nu_z$ may have zeros \cite{Peralakernels} and such explicit formulas for the kernels do not necessarily exist. This is one of the main obstacles in dealing with $P_\nu$~\cite{CP2,PR2016/1}. Nonetheless, we shall prove in Proposition~\ref{pr:bqDDD} below that if $\nu\in B_q$ is radial, then $P_\nu:L^q_\nu\to L^q_\nu$ is bounded for each $1<q<\infty$. The proof of this relies on accurate estimates for the integral means of $B^z_\nu$ recently obtained in \cite[Theorem~1]{PR2016/1}, and the result itself plays an important  role in the proof of the main discovery of this paper.

All the above makes the class of radial weights in $B_q$ an appropriate framework for the study of the big Hankel operator
    $$
    H_f^\nu(g)(z)=(I-P_\nu)(fg)(z),\quad f\in L^1_\nu,\quad z\in\D,
    $$
on weighted Bergman spaces. For an analytic function $f$, the Hankel operator $H^\b_{\overline{f}}$, induced by a standard projection, has been widely studied on Bergman spaces since the pioneering work of Axler~\cite{AxlerDuke}, which was later extended in \cite{AFP88}. In the case of a rapidly decreasing weight $\nu$ and $f\in\H(\D)$, Galanopoulos and Pau~\cite{GalPauAASF} did an extensive research on $H^\nu_{\overline{f}}$ on $A^2_\nu$;    see~\cite{Arrousi} for further results. For general symbols, Zhu~\cite{ZhuTAMS87} was the first to build up a bridge between Hankel operators and the mean oscillation of the symbols in the Bergman metric, and this idea has been further developed in several contexts~\cite{BBCZ,BCZ1,BCZ2,Zhu1992}; see \cite{Zhu} and the references therein for further information on the theory of Hankel operators. More recently, Pau, Zhao and Zhu~\cite{PZZ} described
the complex valued symbols $f$ such that the Hankel operators $H_f^\b$ and $H^\b_{\overline{f}}$ are simultaneously bounded from $A^p_\alpha$ to $L^q_\b$, provided $1<p\le q<\infty$. Our primary aim is to extend this last-mentioned result to the context of radial $B_q$-weights. To do this, some definitions are needed. For a radial weight $\om$, we assume throughout the paper that $\widehat{\om}(z)=\int_{|z|}^1 \om(s)\,ds>0$ for all $z \in \D$, for otherwise the Bergman space $A^p_\om$ would contain all analytic functions in $\D$. A radial weight $\om$ belongs to the class~$\DD$ if there exists a constant $C=C(\om)>1$ such that $\widehat{\om}(r)\le C\widehat{\om}(\frac{1+r}{2})$ for all $0\le r<1$. Moreover, if there exist $K=K(\om)>1$ and $C=C(\om)>1$ such that
    \begin{equation}\label{K}
    \widehat{\om}(r)\ge C\widehat{\om}\left(1-\frac{1-r}{K}\right),\quad 0\le r<1,
    \end{equation}
then $\om\in\Dd$. We write $\DDD=\DD\cap\Dd$ for short. For basic properties of these classes of weights and more, see~\cite{PelSum14,PR2014Memoirs} and the references therein. Let $\b(z,\z)$ denote the hyperbolic distance between $z,\z\in\D$, $\Delta(z,r)$ the hyperbolic disc of center $z$ and radius $r>0$, and $S(z)$ the Carleson square associated to $z$. For $0<p,q<\infty$ and radial weights $\om,\nu$, define
    \begin{equation}\label{eq:gammaintro}
    \g(z)=\g_{\om,\nu,p,q}(z)=\frac{\widehat{\nu}(z)^\frac1q(1-|z|)^{\frac1q}}{\widehat{\om}(z)^\frac1p(1-|z|)^{\frac1p}},\quad z\in\D.
    \end{equation}
Further, for $f\in L^1_{\nu,{\rm loc}}$, write $\fgrnu(z)=\frac{\int_{\Delta(z,r)}f(\z)\nu(\z)\,dA(\z)}{\nu(\Delta(z,r))}$ and
    $$
    \MO_{\nu,q,r}(f)(z)=\left(\frac{1}{\nu(\Delta(z,r))} \int_{\Delta(z,r)}|f(\z)-\fgrnu(z)|^q\nu(\z)\,dA(\z)\right)^{\frac{1}{q}}
    $$
for all $z\in\D$. It is worth noticing that for prefixed $r>0$, the quantity $\nu(\Delta(z,r))$ may equal to zero for some $z$ arbitrarily close to the boundary if $\nu\in\DD$. However, if $\nu\in\DDD$, then there exists $r_0=r_0(\nu)>0$ such that $\nu(\Delta(z,r))\asymp\nu(S(z))>0$ for all $z\in\D$ if $r\ge r_0$. The space $\BMO(\Delta)_{\om,\nu,p,q,r}$ consists of $f\in L^q_{\nu,{\rm loc}}$ such that
    $$
    \|f\|_{\BMO(\Delta)_{\om,\nu,p,q,r}}=\sup_{z\in\D}\left(\MO_{\nu,q,r}(f)(z)\g(z)\right)<\infty.
    $$
We will show that if $\nu\in\DDD$, then $\BMO(\Delta)_{\om,\nu,p,q,r}$ does not depend on $r$ for $r\ge r_0$. In this case, we simply write $\BMO(\Delta)_{\om,\nu,p,q}$. The main result of this study reads as follows and it will be proved in Section~\ref{Proof of main theorem}.

\begin{theorem}\label{th:Hankelqbiggerp}
Let $1<p\le q<\infty$, $\om\in\DDD$, $\nu\in B_q$ a radial weight and $f\in L^q_\nu$.
Then  $H_f^\nu,H_{\overline{f}}^\nu:A^p_\om\to L^q_\nu$ are bounded if and only if $f\in \BMO(\Delta)_{\om,\nu,p,q}$.
\end{theorem}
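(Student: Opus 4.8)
The plan is to base both directions on the pointwise formula
$$
H_f^\nu(g)(z)=f(z)g(z)-P_\nu(fg)(z)=\int_\D\bigl(f(z)-f(\z)\bigr)\,g(\z)\,\overline{B^\nu_z(\z)}\,\nu(\z)\,dA(\z),
$$
which follows from the reproducing property $g(z)=\int_\D g(\z)\overline{B^\nu_z(\z)}\nu(\z)\,dA(\z)$ and holds for $g$ in the dense subclass of polynomials, the general case following by density once the norm estimates are in place. Since $|f(z)-f(\z)|=|\overline{f(z)}-\overline{f(\z)}|$, this representation also makes transparent why $H_f^\nu$ and $H_{\overline f}^\nu$ must be handled together: a single Hankel operator only controls a one-sided quantity, whereas the mean oscillation $\MO_{\nu,q,r}(f)$ in the target space is symmetric in $f$.

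For the necessity, I would fix $a\in\D$ and test against a normalized family $g_a\in A^p_\om$ concentrated on $\Delta(a,r)$, of the kind standard in the theory of $\mathcal D$-weights, chosen so that $\|g_a\|_{A^p_\om}\lesssim 1$ while $|g_a|\asymp\bigl(\widehat\om(a)(1-|a|)\bigr)^{-1/p}$ throughout $\Delta(a,r)$. Estimating $\|H_f^\nu(g_a)\|_{L^q_\nu}$ and $\|H_{\overline f}^\nu(g_a)\|_{L^q_\nu}$ from below by restricting the integration to $\Delta(a,r)$, and using that on this disc $P_\nu(fg_a)$ is, up to the local average, comparable to $\fgrnu(a)\,g_a$, I expect the combination of the two operators to yield
$$
\g(a)\,\MO_{\nu,q,r}(f)(a)\lesssim \|H_f^\nu\|+\|H_{\overline f}^\nu\|
$$
uniformly in $a$, where I use $\nu(\Delta(a,r))\asymp\widehat\nu(a)(1-|a|)$, valid since radial $B_q$-weights belong to $\mathcal D$. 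Taking the supremum over $a$ then gives $f\in\BMO(\Delta)_{\om,\nu,p,q}$.

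For the sufficiency I would decompose $\BMO(\Delta)_{\om,\nu,p,q}=\BO+\BA$ into a weighted bounded-oscillation part and a weighted bounded-average part, a splitting analogous to the classical $\BMO=\BO+\BA$ decomposition and obtained from the doubling of $\om$ and $\nu$. Because $|f_2|=|\overline{f_2}|$, the $\BA$-part $f_2$ is handled for both symbols at once through
$$
\|H_{f_2}^\nu g\|_{L^q_\nu}\le\|f_2 g\|_{L^q_\nu}+\|P_\nu(f_2 g)\|_{L^q_\nu}\lesssim\|f_2 g\|_{L^q_\nu},
$$
where the boundedness of $P_\nu$ on $L^q_\nu$ is exactly Proposition~\ref{pr:bqDDD}; the weighted-average bound on $f_2$ is precisely the statement that $|f_2|^q\nu\,dA$ is a $q$-Carleson measure for $A^p_\om$, so that $\|f_2 g\|_{L^q_\nu}\lesssim\|g\|_{A^p_\om}$ by the Carleson-measure theory for Bergman spaces with weights in $\mathcal D$.

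The $\BO$-part $f_1$ is returned to the integral representation, where I would bound $|f_1(z)-f_1(\z)|$ by the oscillation norm times a factor growing controllably in $\b(z,\z)$ and compensated by $\g(z)$, and then run a Schur-type argument on the resulting positive operator. This last step is the main obstacle and the point at which the proof departs from the blueprint of Pau--Zhao--Zhu~\cite{PZZ}: for a general radial weight $\nu$ the kernel $B^\nu_z$ has no closed form and may vanish, so pointwise kernel bounds are unavailable and the Schur estimate must instead be driven entirely by the integral means of $B^\nu_z$ from \cite[Theorem~1]{PR2016/1}. Matching the averaged off-diagonal decay of the kernel with the growth of $|f_1(z)-f_1(\z)|$ and the $\g$-factor built from the two genuinely different weights $\om$ and $\nu$, so that Schur's test closes, is the technical heart of the argument; by comparison the independence of $\BMO(\Delta)_{\om,\nu,p,q,r}$ from $r\ge r_0$ and the $\BO+\BA$ decomposition are routine consequences of the doubling of $\om$ and $\nu$.
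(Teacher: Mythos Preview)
Your overall architecture matches the paper's: necessity via testing on a normalized kernel family, sufficiency via the decomposition $\BMO(\Delta)=\BA+\BO$, with the $\BA$-piece handled through Carleson measures and the boundedness of $P_\nu$ on $L^q_\nu$. The $\BA$ estimate you wrote is essentially the paper's.

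There is, however, a real gap in your plan for the $\BO$-piece. You propose to stay with the $\nu$-kernel $B^\nu_z$ and drive a Schur argument using only the integral means of $|B^\nu_z|$ from \cite{PR2016/1}. That is precisely what the paper avoids. The oscillation bound for $f_1\in\BO$ (Lemma~\ref{bo}) produces the factor $\Gamma_\tau(z,\z)$, which is built from $|1-\overline z\z|$ and $\max\{1-|z|,1-|\z|\}$; combining this with $|B^\nu_z(\z)|$, for which one has no usable pointwise control, does not yield a kernel to which a Schur-type test applies. The paper's key maneuver is Lemma~\ref{le:bq}: since $\nu\in B_q$, some standard projection $P_s$ is bounded on $L^q_\nu$, and then $\|H^\nu_f(g)\|_{L^q_\nu}\lesssim\|H^s_f(g)\|_{L^q_\nu}$. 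One is thus reduced to the explicit kernel $(1-\overline z\z)^{-(2+s)}$, which meshes with $\Gamma_\tau$ and permits the five-region splitting in Proposition~\ref{pr:SufBopart}, ultimately reduced to the boundedness of the model operators $S_{b,c}$ via Lemma~\ref{lemma:S}. Without this switch to a standard kernel your Schur step has no kernel to work with.

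Your necessity sketch is also too loose. The assertion that on $\Delta(a,r)$ the projection $P_\nu(fg_a)$ is ``up to the local average comparable to $\widehat f_{r,\nu}(a)\,g_a$'' is not something one can read off, and the paper does not proceed this way. Instead (Proposition~\ref{pr:Hankelnecessity}) it tests on $b^\eta_{z,\om}=B^\eta_z/\|B^\eta_z\|_{A^p_\om}$ with a standard kernel $B^\eta_z$, introduces the analytic auxiliary $g^\eta_{z,\om}=P_\nu(\overline f\,b^\eta_{z,\om})/b^\eta_{z,\om}$, and uses the reproducing identity $\overline{g(z)}\,b^\eta_{z,\om}=P_\eta(\overline g\,b^\eta_{z,\om})$ together with the $L^q_\nu$-boundedness of $P_\eta$. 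This replaces the vague ``local comparison'' by an exact algebraic identity and is where both $H^\nu_f$ and $H^\nu_{\overline f}$ enter; Lemma~\ref{le:BMOr} then converts the resulting global estimate into the $\BMO(\Delta)$ condition.

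Finally, you describe the $\BO+\BA$ decomposition as ``routine''. The direction needed for the main theorem, namely $\BMO\Rightarrow\BO+\BA$, is indeed close to \cite{PZZ}; but the $r$-independence of $\BMO(\Delta)_{\om,\nu,p,q,r}$ is obtained in the paper through the Berezin-transform characterization (Theorem~\ref{th:BMOdecom}(iii)), whose proof requires the four-region analysis of $\Gamma_\tau$ and is far from routine.
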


The approach employed  in the proof of this result follows the guideline of  \cite[Thorem~4.1]{PZZ}, however a good number of steps
cannot be adapted straightforwardly and need substantial modifications. In Section~\ref{Auxiliary results} we prove some results concerning the classes of weights
involved in this work and the boundedness of the Bergman projection $P_\nu$, while in Section~\ref{Some spaces of functions} we introduce and study two spaces of functions on $\D$.
One of them is denoted as $\BA(\Delta)_{\om,\nu,p,q}$, and although its initial definition depends on $r$, it can be described in terms of an appropriate Berezin transform or simply observing that $f\in\BA(\Delta)_{\om,\nu,p,q}$ if and only the multiplication operator $M_f(g)=fg$ is bounded from $A^p_\om$ to $L^q_\nu$~\cite{PR2015/2}. The second one, denoted by $\BO(\Delta)_{\om,\nu,p,q}$, consists of continuous functions on $\D$ such that the oscillation in the Bergman metric is bounded in terms of the auxiliary function $\g$ given in \eqref{eq:gammaintro}. We show that $f\in \BO(\Delta)_{\om,\nu,p,q}$ if and only if
    $$
    |f(z)-f(\z)|\lesssim\|f\|_{\BO(\Delta)_{\om,\nu,p,q}}(1+\beta(z,\z))\Gamma_\tau(z,\z)\quad z,\z\in\D,
    $$
where
    \begin{equation*}
    \Gamma_\tau(z,\z)=\frac{\left(\frac{|1-\overline{z}\z|^2}{\max\{1-|z|^2,1-|\z|^2\}}\right)^{\frac{1}{p}-\frac{\tau+1}{q}}
    \widehat{\om}\left(1-\frac{2|1-\overline{z}\z|^2}{\max\{1-|z|^2,1-|\z|^2\}}\right)^{\frac{1}{p}}}
    {\min\left\{ \frac{\widehat{\nu}(z)}{(1-|z|)^\tau},
    \frac{\widehat{\nu}(\z)}{(1-|\z|)^\tau}\right\}^{\frac1q}},\quad z,\z\in\D,
    \end{equation*}
for an appropriate (small) constant $\tau=\tau(\om,\nu)>0$. If $\om$ and $\nu$ are standard weights, then $\Gamma_\tau$ does not coincide with the function playing the corresponding role in \cite[Lemma~3.2]{PZZ}; in the latter case the function is simpler in many aspects and does not depend on the additional parameter $\tau$. Then, we show that
    \begin{equation}\label{eq:deco}
    \BMO(\Delta)_{\om,\nu,p,q}=\BA(\Delta)_{\om,\nu,p,q}+\BO(\Delta)_{\om,\nu,p,q}.
    \end{equation}
In order to prove this decomposition, due to the complex nature of $ \Gamma_\tau(z,\z)$, we are forced to split $\D$ into several regions depending on $z$, establish sharp estimates for $\Gamma_\tau(z,\z)$ in each region and then apply properties of weights in $\DDD$. The identity \eqref{eq:deco} together with a description of the boundedness of the integral operator $$T_{b,c}f(z)=\int_\D f(\z)\frac{(1-|\z|^2)^b}{(1-z\overline{\z})^c}\,dA(\z)$$ and its maximal counterpart from $A^p_\om$ to $L^q_\nu$, see Section~\ref{Boundedness of integral operators} below,
are  key tools to prove that each $f\in \BMO(\Delta)_{\om,\nu,p,q}$ induces a bounded Hankel operator from $A^p_\om$ to $L^q_\nu$. Theorem~\ref{th:Hankelqbiggerp} will be proved in Section~\ref{Proof of main theorem}.

Finally, in Section~\ref{Anti-analytic symbols}, as a byproduct of Theorem~\ref{th:Hankelqbiggerp}, we describe the analytic symbols such that $H_{\overline{f}}:A^p_\om\to L^q_\nu$ is bounded. The space $\Bg$ consists of $f\in\H(\D)$ such that
    $$
    \|f\|_{\Bg}=\sup_{z\in\D}|f'(z)|(1-|z|)\gamma(z)+|f(0)|<\infty,
    $$
where $\gamma$ is given by \eqref{eq:gammaintro}.

\begin{theorem}\label{th:analytic}
Let $1<p\le q<\infty$, $\om\in\DDD$, $\nu\in B_q$ a radial weight  and $f\in A^1_\nu$.
Then $H^{\nu}_{\overline{f}}:A^p_\om \to L^q_\nu$ is bounded if and only if $f\in\Bg$.
\end{theorem}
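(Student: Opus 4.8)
The plan is to deduce Theorem~\ref{th:analytic} from Theorem~\ref{th:Hankelqbiggerp} by exploiting that an analytic symbol contributes only a trivial analytic part. First I would record that if $f\in A^1_\nu$ is analytic, then $H^\nu_f$ vanishes on a dense subspace of $A^p_\om$: for $g\in H^\infty\cap A^p_\om$ the product $fg$ is analytic and lies in $A^1_\nu$, so the reproducing property gives $P_\nu(fg)=fg$ and hence $H^\nu_f(g)=(I-P_\nu)(fg)=0$; since such $g$ are dense in $A^p_\om$, the operator $H^\nu_f$ is bounded (indeed zero). Consequently the boundedness of $H^\nu_{\overline f}$ alone is equivalent to the simultaneous boundedness of $H^\nu_f$ and $H^\nu_{\overline f}$, so that Theorem~\ref{th:Hankelqbiggerp} becomes applicable and reduces the whole problem to the equivalence, for analytic $f$, of $f\in\BMO(\Delta)_{\om,\nu,p,q}$ and $f\in\Bg$.

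Before invoking Theorem~\ref{th:Hankelqbiggerp} I must supply its standing hypothesis $f\in L^q_\nu$, which is stronger than the assumption $f\in A^1_\nu$ imposed here. In the necessity direction this is automatic: since $\nu$ is radial, a monomial computation shows that $P_\nu(\overline f)=\overline{f(0)}$, so testing the bounded operator $H^\nu_{\overline f}$ on the constant function $g\equiv 1\in A^p_\om$ yields $\overline f-\overline{f(0)}=H^\nu_{\overline f}(1)\in L^q_\nu$; as $\nu$ is a finite measure, the constant $\overline{f(0)}$ also lies in $L^q_\nu$ and therefore $f\in A^q_\nu$. In the sufficiency direction I would verify the embedding $\Bg\subset A^q_\nu$ by integrating the pointwise bound $|f'(z)|(1-|z|)\le\|f\|_\Bg\,\gamma(z)^{-1}$ against $\nu$; this is routine and relies only on $p\le q$, the explicit shape of $\gamma$ in \eqref{eq:gammaintro}, and the doubling properties of $\om,\nu\in\DDD$.

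The analytic core of the argument is then the pointwise comparison
    $$
    \MO_{\nu,q,r}(f)(z)\asymp (1-|z|)\,|f'(z)|,\qquad z\in\D,
    $$
valid uniformly for $r\ge r_0$ and all analytic $f$. For the upper bound I would subtract the value $f(z)$ and estimate $|f(\z)-f(z)|$ on $\Delta(z,r)$ by integrating $f'$ along a path, using that subharmonicity of $|f'|$ together with the regularity of $\nu$ makes $|f'|$ comparable to $|f'(z)|$ on the slightly larger disc, so that $|f(\z)-f(z)|\lesssim (1-|z|)|f'(z)|$; replacing $f(z)$ by the nearly optimal constant $\fgrnu(z)$ only improves this. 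For the lower bound I would recover $f'(z)$ from $f-\fgrnu(z)$ through Cauchy's formula on a circle of radius comparable to $1-|z|$, and then pass from the resulting unweighted sub-mean-value estimate for the subharmonic function $|f-\fgrnu(z)|^q$ to its $\nu$-weighted average over $\Delta(z,r)$. Granting the comparison, I would multiply by $\gamma(z)$ and take the supremum to obtain $\|f\|_{\BMO(\Delta)_{\om,\nu,p,q}}\asymp\sup_{z\in\D}|f'(z)|(1-|z|)\gamma(z)\asymp\|f\|_\Bg$, the additive term $|f(0)|$ being immaterial because oscillation ignores constants and $f(0)$ is finite for $f\in A^1_\nu$; combined with the previous two paragraphs this proves the theorem.

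The step I expect to be the main obstacle is the lower bound in the displayed comparison, namely the passage from an unweighted sub-mean-value inequality to the $\nu$-weighted average of $|f-\fgrnu(z)|^q$. This is exactly where the radial structure of $\nu$ enters: I would use that a radial $\nu\in B_q$ belongs to $\DDD$ (Section~\ref{Auxiliary results}), so that $\nu(\Delta(z,r))\asymp\nu(S(z))>0$ for $r\ge r_0$ and $\nu$ is regular enough on hyperbolic discs for the weighted and unweighted averages to be comparable. Without this regularity $\MO_{\nu,q,r}(f)(z)$ could degenerate and the clean equivalence with $(1-|z|)|f'(z)|$ would break down.
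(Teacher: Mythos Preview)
Your reduction to the identification $\BMO(\Delta)\cap\H(\D)=\Bg$ is exactly right, as is the way you secure the standing hypothesis $f\in L^q_\nu$ in both directions (the computation $P_\nu(\overline f)=\overline{f(0)}$ and the embedding $\Bg\subset A^q_\nu$). This is precisely what the paper does, via Proposition~\ref{pr:ana} and the same test on $g\equiv1$.

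There is, however, a genuine gap in your ``analytic core''. The pointwise comparison
\[
\MO_{\nu,q,r}(f)(z)\asymp(1-|z|)\,|f'(z)|
\]
is \emph{false}: take $f(w)=w^{2}$ and $z=0$, so that $f'(0)=0$ while $\MO_{\nu,q,r}(f)(0)>0$. The flaw sits in the upper-bound argument: subharmonicity of $|f'|$ only yields $|f'(z)|\lesssim$ (average of $|f'|$), not the reverse inequality you invoke; there is no reason for $|f'|$ to be comparable to $|f'(z)|$ across $\Delta(z,r)$. What you actually need---and what suffices for the theorem---is the one-sided estimate $\MO_{\nu,q,r}(f)(z)\gamma(z)\lesssim\|f\|_{\Bg}$, which follows immediately once you bound $|f(\z)-f(z)|$ by integrating $|f'|\le\|f\|_{\Bg}\bigl((1-|\cdot|)\gamma\bigr)^{-1}$ along a segment in $\Delta(z,r)$ and use that $(1-|\cdot|)\gamma$ is essentially constant there. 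Your lower bound via Cauchy's formula does hold pointwise, so combining it with this corrected upper bound gives the desired equivalence of suprema.

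It is worth noting that the paper proves $\Bg\subset\BMO(\Delta)$ by a different and heavier route: it verifies the Berezin-transform criterion of Theorem~\ref{th:BMOdecom}(iv), invoking \cite[Lemma~7]{PZMathAnn15} and the weight lemmas of Section~\ref{Auxiliary results}. Your corrected direct estimate is more elementary. For the converse inclusion the paper also passes through the Berezin characterization and Lemma~\ref{LemmaDeMierda} to replace $\nu$ by $\widehat\nu/(1-|\cdot|)$, which is essentially constant on hyperbolic discs; this is exactly the device that resolves the weighted-versus-unweighted difficulty you flagged as the main obstacle.
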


Throughout the paper $\frac{1}{p}+\frac{1}{p'}=1$ for $1<p<\infty$. Further, the letter $C=C(\cdot)$ will denote an
absolute constant whose value depends on the parameters indicated
in the parenthesis, and may change from one occurrence to another.
We will use the notation $a\lesssim b$ if there exists a constant
$C=C(\cdot)>0$ such that $a\le Cb$, and $a\gtrsim b$ is understood
in an analogous manner. In particular, if $a\lesssim b$ and
$a\gtrsim b$, then we will write $a\asymp b$.

\section{Auxiliary results}
\label{Auxiliary results}

For a radial weight $\om$, $K>1$ and $0\le r<1$, let $\r_n^r=\r_n^r(\om,K)$ be defined by
$\widehat{\om}(\r^r_n)=\widehat{\om}(r)K^{-n}$ for all $n\in\N\cup\{0\}$. Write $\r_n=\r^0_n$ for short. For $x>-1$, write $\om_x=\int_0^1r^x\om(r)\,dr$. Denote
    $$
    \omega^\star(z)=\int_{|z|}^1\log\frac{s}{|z|}\om(s)s\,ds,\quad z\in\D\setminus\{0\}.
    $$

Throughout the proofs we will repeatedly use several basic properties of weights in the classes $\DD$ and $\Dd$. For a proof of the first lemma, see~\cite[Lemma~2.1]{PelSum14}; the second one can be proved by similar arguments.

\begin{letterlemma}\label{Lemma:weights-in-D-hat}
Let $\om$ be a radial weight. Then the following statements are equivalent:
\begin{itemize}
\item[\rm(i)] $\om\in\DD$;
\item[\rm(ii)] There exist $C=C(\om)>0$ and $\b=\b(\om)>0$ such that
    \begin{equation*}
    \begin{split}
    \widehat{\om}(r)\le C\left(\frac{1-r}{1-t}\right)^{\b}\widehat{\om}(t),\quad 0\le r\le t<1;
    \end{split}
    \end{equation*}
\item[\rm(iii)] There exist $C=C(\om)>0$ and $\gamma=\gamma(\om)>0$ such that
    \begin{equation*}
    \begin{split}
    \int_0^t\left(\frac{1-t}{1-s}\right)^\g\om(s)\,ds
    \le C\widehat{\om}(t),\quad 0\le t<1;
    \end{split}
    \end{equation*}
\item[\rm(iv)] There exists $\lambda=\lambda(\om)\ge0$ such that
    $$
    \int_\D\frac{dA(z)}{|1-\overline{\z}z|^{\lambda+1}}\asymp\frac{\widehat{\om}(\zeta)}{(1-|\z|)^\lambda},\quad \z\in\D;
    $$
\item[\rm(v)] There exist $K=K(\om)>1$ and $C=C(\om,K)>1$ such that $1-\r_n^r(\om,K)\ge C(1-\r^r_{n+1}(\om,K))$ for some (equivalently for all) $0\le r<1$ and for all $n\in\N\cup\{0\}$.
\end{itemize}
\end{letterlemma}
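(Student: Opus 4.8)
The plan is to treat all five conditions as reformulations of the doubling of the tail integral $\widehat{\om}$, using (ii) as a hub and establishing (i)$\Leftrightarrow$(ii)$\Leftrightarrow$(iii)$\Leftrightarrow$(iv) together with (i)$\Leftrightarrow$(v). For (ii)$\Rightarrow$(i) I would simply take $t=\frac{1+r}{2}$, so that $\frac{1-r}{1-t}=2$ and the constant becomes $C2^{\b}$. For the converse (i)$\Rightarrow$(ii), the idea is to iterate the defining halving inequality: setting $r_k=1-2^{-k}(1-r)$, membership in $\DD$ gives $\widehat{\om}(r_k)\le C\widehat{\om}(r_{k+1})$ and hence $\widehat{\om}(r)\le C^{n}\widehat{\om}(r_n)$; choosing $n=\lceil\log_2\frac{1-r}{1-t}\rceil$ forces $r_n\ge t$, whence $\widehat{\om}(r_n)\le\widehat{\om}(t)$ and $C^{n}\le C\big(\frac{1-r}{1-t}\big)^{\log_2 C}$, which is (ii) with $\b=\log_2 C$.

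For (ii)$\Rightarrow$(iii) I would split $[0,t]$ into the dyadic pieces $A_k=\{s:2^{k}(1-t)\le 1-s<2^{k+1}(1-t)\}$. On $A_k$ one has $\big(\frac{1-t}{1-s}\big)^{\g}\asymp 2^{-k\g}$, while $\int_{A_k}\om\le\widehat{\om}\big(1-2^{k+1}(1-t)\big)\lesssim 2^{(k+1)\b}\widehat{\om}(t)$ by (ii); summing the geometric series $\sum_k 2^{-k(\g-\b)}$ then yields (iii) for every $\g>\b$. Conversely, for (iii)$\Rightarrow$(i) I would restrict the integral in (iii) to $s\in[r,\frac{1+r}{2}]$, where the kernel is bounded below by $2^{-\g}$, obtaining $\int_r^{(1+r)/2}\om\lesssim\widehat{\om}(\frac{1+r}{2})$, and then add $\widehat{\om}(\frac{1+r}{2})$ to both sides to recover $\widehat{\om}(r)$.

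The equivalence (iii)$\Leftrightarrow$(iv) is the most delicate. Writing the left-hand side of (iv) as $\int_\D\om(z)|1-\overline{\z}z|^{-(\lambda+1)}\,dA(z)$ and using radiality together with the standard angular estimate $\int_0^{2\pi}|1-\overline{\z}\r e^{i\theta}|^{-(\lambda+1)}\,d\theta\asymp(1-|\z|\r)^{-\lambda}$ (valid for $\lambda>0$), the integral reduces to $\int_0^1\om(\r)(1-|\z|\r)^{-\lambda}\,d\r$, where $1-|\z|\r\asymp\max\{1-|\z|,1-\r\}$. Splitting at $\r=|\z|$, the outer part $\r\ge|\z|$ contributes exactly $\widehat{\om}(\z)(1-|\z|)^{-\lambda}$ (which also supplies the lower bound, valid for any weight), while the inner part $\r<|\z|$ equals $(1-|\z|)^{-\lambda}$ times the integral appearing in (iii) with $\g=\lambda$; hence (iv) with a large $\lambda$ is equivalent to (iii). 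Finally, for (i)$\Leftrightarrow$(v) I would exploit that $\widehat{\om}(\r_n^r)=\widehat{\om}(r)K^{-n}$, so $\widehat{\om}(\r_n^r)=K\widehat{\om}(\r_{n+1}^r)$; inserting this into (ii) gives $K\le C\big(\frac{1-\r_n^r}{1-\r_{n+1}^r}\big)^{\b}$, so taking $K>C$ proves (v), and conversely iterating (v) gives $1-\r_n^r\le C^{-n}(1-r)$, so for the first $n$ with $C^n\ge2$ one has $\r_n^r\ge\frac{1+r}{2}$ and $\widehat{\om}(r)=K^n\widehat{\om}(\r_n^r)\le K^n\widehat{\om}(\frac{1+r}{2})$, i.e.\ (i).

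I expect the main obstacle to be the step (iii)$\Leftrightarrow$(iv): it requires the angular integral asymptotics, care with the admissible range $\lambda>0$, and reconciling the existential quantifiers, since condition (iii) holds for \emph{every} $\g>\b$ whereas (iv) is an existence statement, so one must produce a sufficiently large $\lambda$ and read it back as the exponent $\g$. A minor recurring technical point is the well-definedness of $\r_n^r$, which follows from the continuity of $\widehat{\om}$, its strict positivity on $\D$, and the fact that $\widehat{\om}(1^-)=0$.
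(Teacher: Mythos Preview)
The paper does not actually prove this lemma: it is stated as Lemma~A with the remark ``For a proof of the first lemma, see~\cite[Lemma~2.1]{PelSum14},'' so there is no in-paper argument to compare against. Your proof sketch is the standard one and is correct in all essentials; in particular you have correctly read condition~(iv) as
\[
\int_\D\frac{\om(z)\,dA(z)}{|1-\overline{\z}z|^{\lambda+1}}\asymp\frac{\widehat{\om}(\zeta)}{(1-|\z|)^\lambda},
\]
which is how the paper uses it later (e.g.\ in the proof of Lemma~\ref{ba}), even though the displayed statement in the paper omits the factor $\om(z)$.

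One small point in your (v)$\Rightarrow$(i) argument: as written you assume (v) for the specific $r$ at which you want to verify doubling, whereas the lemma asserts (v) for \emph{some} $r$ already suffices. To cover this, note that the sequence $(\r_n^{r_0})_n$ for the given $r_0$ partitions $[r_0,1)$ into blocks on which $\widehat\om$ drops by the fixed factor $K$ and the lengths $1-\r_n^{r_0}$ shrink geometrically; locating an arbitrary $r\ge r_0$ between consecutive $\r_n^{r_0}$ and $\r_{n+1}^{r_0}$ then gives the doubling of $\widehat\om$ at $r$ with a constant depending only on $K$ and $C$, while for $r<r_0$ the inequality is trivial since $\widehat\om$ is bounded above by $\widehat\om(0)$ and bounded below by $\widehat\om(r_0)>0$ on $[0,r_0]$. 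This also yields the parenthetical ``equivalently for all'' in (v).
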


\begin{letterlemma}\label{Lemma:weights-in-D}
Let $\om$ be a radial weight. Then $\om\in\Dd$ if and only if there exist $C=C(\om)>0$ and $\a=\a(\om)>0$ such that
    \begin{equation*}
    \begin{split}
    \widehat{\om}(t)\le C\left(\frac{1-t}{1-r}\right)^{\a}\widehat{\om}(r),\quad 0\le r\le t<1.
    \end{split}
    \end{equation*}
\end{letterlemma}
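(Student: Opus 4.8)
The plan is to mirror the proof of the equivalence (i)$\,\Leftrightarrow\,$(ii) in Lemma~\ref{Lemma:weights-in-D-hat}, exploiting that $\widehat{\om}$ is non-increasing (being the tail integral $\int_r^1\om$ of a nonnegative function) together with an iteration of the one-step condition \eqref{K} along a geometric sequence of radii approaching the boundary. The two implications are genuinely separate, so I would treat them in turn.

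For the implication $\om\in\Dd\Rightarrow$ power bound, fix the constants $K>1$ and $C>1$ from \eqref{K} and, for a fixed $r$, set $r_n=1-\frac{1-r}{K^n}$, so that $r_0=r$, $r_n\nearrow1$ and $r_{n+1}=1-\frac{1-r_n}{K}$. Iterating \eqref{K} then gives $\widehat{\om}(r_n)\le C^{-n}\widehat{\om}(r)$ for every $n$. Given an arbitrary $t\in[r,1)$, I would choose $n\ge0$ with $r_n\le t<r_{n+1}$; monotonicity of $\widehat{\om}$ yields $\widehat{\om}(t)\le\widehat{\om}(r_n)\le C^{-n}\widehat{\om}(r)$. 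It remains to convert the geometric factor $C^{-n}$ into a power of $\frac{1-t}{1-r}$. Since $t<r_{n+1}$ forces $\frac{1-r}{1-t}<K^{n+1}$, writing $\a=\frac{\log C}{\log K}>0$ (so that $C=K^{\a}$) gives $C^{-n}=(K^{-n})^{\a}\le\big(K\,\tfrac{1-t}{1-r}\big)^{\a}=C\big(\tfrac{1-t}{1-r}\big)^{\a}$, which is exactly the claimed estimate with constant $C$ and exponent $\a$.

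For the converse, assume the power bound holds with constants $C>0$ and $\a>0$. Fix $K>1$ and apply the bound to the pair $r\le t=1-\frac{1-r}{K}$, for which $\frac{1-t}{1-r}=\frac1K$. This produces $\widehat{\om}\big(1-\tfrac{1-r}{K}\big)\le C K^{-\a}\,\widehat{\om}(r)$, uniformly in $r$. Choosing $K$ large enough that $CK^{-\a}<1$ (for instance any $K>C^{1/\a}$) and taking the $\Dd$-constant equal to $K^{\a}/C>1$ gives precisely \eqref{K}, so $\om\in\Dd$.

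The argument is essentially bookkeeping, and no single step is a real obstacle; the only point demanding care is the passage from the discrete geometric decay $C^{-n}$ to the continuous power $\big(\tfrac{1-t}{1-r}\big)^{\a}$. There one must pin down the exponent $\a=\log C/\log K$ and check the boundary case $n=0$ (where $t$ lies in $[r,r_1)$) so that the implied constants remain uniform in both $r$ and $t$.
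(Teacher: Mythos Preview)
Your proof is correct and is exactly the iteration argument the paper has in mind: the paper omits the proof, stating only that it follows by the same reasoning as the equivalence (i)$\Leftrightarrow$(ii) in Lemma~\ref{Lemma:weights-in-D-hat}, and your geometric iteration of \eqref{K} along $r_n=1-(1-r)/K^n$ with $\alpha=\log C/\log K$ is precisely that argument.
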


Two more results on weights of more general nature than Lemmas~\ref{Lemma:weights-in-D-hat} and~\ref{Lemma:weights-in-D} are also needed.

\begin{lemma}\label{lemma:d-hat-new}
Let $\om$ be a radial weight. Then the following statements are equivalent:
\begin{itemize}
\item[\rm(i)] $\om\in\DD$;
\item[\rm(ii)] For some (equivalently for each) $\nu\in\DDD$ there exists a constant $C=C(\om,\nu)>0$ such that
    $$
    \int_r^1\frac{\om(t)\widehat{\nu}(t)}{\widehat{\om}(t)}\,dt\le C\widehat{\nu}(r),\quad 0\le r<1;
    $$
\item[\rm(iii)] For some (equivalently for each) $\nu\in\DDD$ there exists a constant $C=C(\om,\nu)>0$ such that
    $$
    \int_0^r\frac{\omega(t)}{\widehat{\omega}(t)\widehat{\nu}(t)}\,dt\le\frac{C}{\widehat{\nu}(r)},\quad 0\le r<1.
    $$
\end{itemize}
\end{lemma}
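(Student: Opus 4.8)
The plan is to prove $\mathrm{(i)}\Leftrightarrow\mathrm{(ii)}$ and $\mathrm{(i)}\Leftrightarrow\mathrm{(iii)}$ by showing, in each case, that (i) forces the condition for \emph{every} $\nu\in\DDD$, while the condition for \emph{some} $\nu\in\DDD$ forces (i); since ``for each'' trivially implies ``for some'' (the class $\DDD$ being nonempty, as it contains the standard weights), this yields the asserted ``some/each'' equivalences. Two elementary facts drive everything. First, $\widehat\om$ is absolutely continuous with $\widehat\om'=-\om$ a.e., so that $\frac{\om(t)}{\widehat\om(t)}=-\frac{d}{dt}\log\widehat\om(t)$ and hence $\int_a^b\frac{\om(t)}{\widehat\om(t)}\,dt=\log\frac{\widehat\om(a)}{\widehat\om(b)}$. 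Second, along the points $\r_n^r=\r_n^r(\om,K)$ one has $\widehat\om(t)\asymp\widehat\om(r)K^{-n}$ and $\int_{\r_n^r}^{\r_{n+1}^r}\om(t)\,dt=\widehat\om(r)K^{-n}(1-K^{-1})$ on the $n$-th ring, while Lemma~\ref{Lemma:weights-in-D-hat}(v), available because $\om\in\DD$, provides a contraction $1-\r_{n+1}^r\le c\,(1-\r_n^r)$ with $c=c(\om)\in(0,1)$, whence $1-\r_n^r\le c^n(1-r)$.

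For $\mathrm{(i)}\Rightarrow\mathrm{(ii)}$ I would split $\int_r^1=\sum_n\int_{\r_n^r}^{\r_{n+1}^r}$; on the $n$-th ring $\widehat\om(t)^{-1}\asymp(\widehat\om(r)K^{-n})^{-1}$ and $\widehat\nu$ is decreasing, so the $n$-th term is $\lesssim\widehat\nu(\r_n^r)$. Invoking $\nu\in\Dd$ through Lemma~\ref{Lemma:weights-in-D} gives $\widehat\nu(\r_n^r)\lesssim\left(\frac{1-\r_n^r}{1-r}\right)^\a\widehat\nu(r)\lesssim c^{n\a}\widehat\nu(r)$, so $\sum_n\widehat\nu(\r_n^r)$ is a convergent geometric series bounded by a multiple of $\widehat\nu(r)$, which is exactly (ii). For $\mathrm{(i)}\Rightarrow\mathrm{(iii)}$ I would first use $\nu\in\Dd$ (Lemma~\ref{Lemma:weights-in-D}) in the form $\frac{1}{\widehat\nu(t)}\lesssim\left(\frac{1-r}{1-t}\right)^\a\frac{1}{\widehat\nu(r)}$ for $t\le r$, which reduces the claim to the uniform estimate $\int_0^r\left(\frac{1-r}{1-t}\right)^\a\frac{\om(t)}{\widehat\om(t)}\,dt\lesssim1$. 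Decomposing $[0,r]$ into the rings based at the origin, each contributes exactly $\log K$ to $\int\frac{\om}{\widehat\om}$ while the downweighting factor is $\lesssim c^{\a(N-n)}$, where $\r_{N+1}$ is the last point not exceeding $r$, so the sum is again geometric and bounded.

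The delicate direction, where any pointwise bound on $\om(t)/\widehat\om(t)$ would destroy the constant, consists of the converses $\mathrm{(ii)}\Rightarrow\mathrm{(i)}$ and $\mathrm{(iii)}\Rightarrow\mathrm{(i)}$; the key is to retain $\om/\widehat\om$ as a logarithmic derivative and integrate. For (ii), I would restrict the integral to $[r,\frac{1+r}{2}]$, where $\nu\in\DD$ (Lemma~\ref{Lemma:weights-in-D-hat}(ii)) gives $\widehat\nu(t)\gtrsim\widehat\nu(r)$; the hypothesis then reads $\widehat\nu(r)\log\frac{\widehat\om(r)}{\widehat\om((1+r)/2)}\lesssim\widehat\nu(r)$, and exponentiating the resulting bound on the logarithm is precisely $\om\in\DD$. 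For (iii), evaluating the hypothesis at $\frac{1+r}{2}$ and restricting to $[r,\frac{1+r}{2}]$, where $\widehat\nu(t)^{-1}\ge\widehat\nu(r)^{-1}$, yields $\log\frac{\widehat\om(r)}{\widehat\om((1+r)/2)}\lesssim\frac{\widehat\nu(r)}{\widehat\nu((1+r)/2)}$, and the right-hand side is bounded exactly because $\nu\in\DD$. The main obstacle is thus conceptual rather than computational: one must recognize that the doubling constant of $\om$ is recovered only as the exponential of a logarithmic integral, and choose the subinterval and the evaluation point so that the $\nu$-factors cancel up to the single-step doubling constant of $\nu$.
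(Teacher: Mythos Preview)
Your proposal is correct and follows essentially the same approach as the paper: the ring decomposition along $\{\rho_n^r\}$ combined with Lemma~\ref{Lemma:weights-in-D-hat}(v) and Lemma~\ref{Lemma:weights-in-D} for the implications (i)$\Rightarrow$(ii),(iii), and the restriction to $[r,\frac{1+r}{2}]$ together with the identity $\int_a^b\frac{\om}{\widehat\om}=\log\frac{\widehat\om(a)}{\widehat\om(b)}$ and the doubling of $\nu$ for the converses. The only cosmetic difference is in (i)$\Rightarrow$(iii), where you first pull out $\widehat\nu(r)^{-1}$ via Lemma~\ref{Lemma:weights-in-D} before decomposing, whereas the paper bounds $1/\widehat\nu$ ring by ring; the resulting estimates are identical.
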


\begin{proof}
Let first $\om\in\DD$ and $0\le r<1$, and consider $\r_n^r=\r_n^r(\om,K)$ for all $n\in\N\cup\{0\}$. Then Lemma~\ref{Lemma:weights-in-D}, applied to $\nu\in\DDD\subset\Dd$, and Lemma~\ref{Lemma:weights-in-D-hat}(v), applied to $\om$, imply
    \begin{equation*}
    \begin{split}
    \int_r^1\frac{\om(t)\widehat{\nu}(t)}{\widehat{\om}(t)}\,dt
    &=\sum_{n=0}^\infty\int_{\r_n^r}^{\r_{n+1}^r}\frac{\om(t)\widehat{\nu}(t)}{\widehat{\om}(t)}\,dt
    \le\sum_{n=0}^\infty\widehat{\nu}(\r_n^r)\int_{\r_n^r}^{\r_{n+1}^r}\frac{\om(t)}{\widehat{\om}(t)}\,dt\\
    &\lesssim\log K\frac{\widehat{\nu}(\r_0^r)}{(1-\r_0^r)^\b}\sum_{n=0}^\infty(1-\r_n^r)^\b\\
    &\le\widehat{\nu}(r)\log K\sum_{n=0}^\infty\frac{1}{(C^\b)^n}=\widehat{\nu}(r)\log K\frac{C^\b}{C^\b-1},\quad0\le r<1,
    \end{split}
    \end{equation*}
for a suitably fixed $K=K(\om)>1$, and thus (ii) is satisfied. Conversely, (ii) implies
    \begin{equation*}
    \begin{split}
    C\widehat{\nu}(r)
    &\ge\int_r^1\frac{\om(t)\widehat{\nu}(t)}{\widehat{\om}(t)}\,dt
    \ge\int_r^{\frac{1+r}{2}}\frac{\om(t)\widehat{\nu}(t)}{\widehat{\om}(t)}\,dt
    \ge\widehat{\nu}\left(\frac{1+r}{2}\right)\log\frac{\widehat{\om}(r)}{\widehat{\om}\left(\frac{1+r}{2}\right)},\quad 0\le r<1,
    \end{split}
    \end{equation*}
and since $\nu\in\DDD\subset\DD$ by the hypothesis, we deduce $\widehat{\om}(r)\lesssim\widehat{\om}\left(\frac{1+r}{2}\right)$ for all $0\le r<1$. Thus $\om\in\DD$.

Let $\om\in\DD$ and $0\le r<1$, and consider $\r_n=\r_n(\om,K)$ for all $n\in\N\cup\{0\}$. Fix $k=k(\om,K)\in\N\cup\{0\}$ such that $\rho_k\le r<\rho_{k+1}$. Then
    $$
    \int_0^r\frac{\omega(t)}{\widehat{\omega}(t)\widehat{\nu}(t)}\,dt
    =\sum_{n=0}^{k-1}\int_{\rho_n}^{\rho_{n+1}}\frac{\omega(t)}{\widehat{\omega}(t)\widehat{\nu}(t)}\,dt
    +\int_{\rho_k}^{r}\frac{\omega(t)}{\widehat{\omega}(t)\widehat{\nu}(t)}\,dt,\quad 0\le r<1,
    $$
where, by Lemma~\ref{Lemma:weights-in-D}, applied to $\nu\in\DDD\subset\Dd$, and Lemma~\ref{Lemma:weights-in-D-hat}(v), applied to $\om$,
    \begin{equation*}
    \begin{split}
    \sum_{n=0}^{k-1}\int_{\rho_n}^{\rho_{n+1}}\frac{\omega(t)}{\widehat{\omega}(t)\widehat{\nu}(t)}\,dt
    &\le\sum_{n=0}^{k-1}\frac{1}{\widehat{\nu}(\rho_{n+1})}\int_{\rho_n}^{\rho_{n+1}}\frac{\omega(t)}{\widehat{\omega}(t)}\,dt\\
    &\lesssim\sum_{n=0}^{k-1}\frac{(1-\r_k)^\a}{\widehat{\nu}(\r_k)}\frac{1}{(1-\rho_{n+1})^\a} \log\left(\frac{\widehat{\omega}(\rho_n)}{\widehat{\omega}(\rho_{n+1})}\right)\\
    &\le\log K\frac{(1-\r_k)^\a}{\widehat{\nu}(r)}\sum_{n=0}^{k-1} \frac{1}{(C^\a)^{k-1-n}(1-\rho_{k})^\a}\\
    &\le\frac{\log K}{\widehat{\nu}(r)}\sum_{n=0}^{\infty}\frac1{(C^\a)^{n}}
    =\frac{\log K}{\widehat{\nu}(r)}\frac{C^\a}{C^\a-1},\quad k\in\N,
    \end{split}
    \end{equation*}
for some $\alpha=\alpha(\nu)>0$ and for a suitably fixed $K=K(\om)>1$, and similarly,
    $$
    \int_{\rho_k}^{r}\frac{\omega(t)}{\widehat{\omega}(t)\widehat{\nu}(t)}\,dt
    \le\frac{1}{\widehat{\nu}(r)}\log\left(\frac{\widehat{\omega}(\rho_k)}{\widehat{\omega}(r)}\right)
    \le\frac{\log K}{\widehat{\nu}(r)},\quad k\in\N\cup\{0\}.
    $$
The statement (iii) follows from these estimates.

Conversely, by replacing $r$ by $\frac{1+r}{2}$ in (iii) we obtain
    \begin{equation*}
    \begin{split}
    \frac{C}{\widehat{\nu}\left(\frac{1+r}{2}\right)}
    &\ge\int_0^{(1+r)/2}\frac{\omega(t)}{\widehat{\omega}(t)\widehat{\nu}(t)}\,dt
    \ge\int_r^{(1+r)/2}\frac{\omega(t)}{\widehat{\omega}(t)\widehat{\nu}(t)}\,dt
    \ge\frac{1}{\widehat{\nu}(r)}\log\frac{\widehat{\omega}(r)}{\widehat{\omega}\left(\frac{1+r}{2}\right)},\quad 0\le r<1,
    \end{split}
    \end{equation*}
and since $\nu\in\DDD\subset\DD$ by the hypothesis, we deduce $\widehat{\om}(r)\lesssim\widehat{\om}\left(\frac{1+r}{2}\right)$ for all $0\le r<1$. Thus $\om\in\DD$.
\end{proof}

\begin{lemma}\label{le:nuhiyperbolic}
Let $\om,\nu\in\DDD$, and denote $\sigma=\sigma_{\om,\nu}=\om\widehat{\nu}/\widehat{\om}$. Then $\widehat{\sigma}\asymp\widehat{\nu}$ on $[0,1)$, and hence $\sigma\in\DDD$.
\end{lemma}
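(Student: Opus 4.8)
The plan is to prove the comparison $\widehat{\sigma}\asymp\widehat{\nu}$ directly from the identity $\widehat{\sigma}(r)=\int_r^1\frac{\om(t)\widehat{\nu}(t)}{\widehat{\om}(t)}\,dt$, and then to read off $\sigma\in\DDD$ from the observation that membership in $\DD$ and $\Dd$ depends only on the tail $\widehat{\,\cdot\,}$ and is therefore stable under replacing a weight by one with a comparable tail.

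The upper estimate $\widehat{\sigma}(r)\le C\widehat{\nu}(r)$ is immediate: it is exactly the implication (i)$\Rightarrow$(ii) of Lemma~\ref{lemma:d-hat-new}, applied to $\om\in\DD$ and $\nu\in\DDD$. For the lower estimate I would keep only a short piece of the interval of integration. Set $s_r=1-\frac{1-r}{K}$, where $K=K(\om)>1$ is the constant in the condition~\eqref{K} for $\om\in\Dd$; note $r<s_r<1$. Since $\widehat{\nu}$ is decreasing,
\[
\widehat{\sigma}(r)\ge\int_r^{s_r}\frac{\om(t)\widehat{\nu}(t)}{\widehat{\om}(t)}\,dt\ge\widehat{\nu}(s_r)\int_r^{s_r}\frac{\om(t)}{\widehat{\om}(t)}\,dt=\widehat{\nu}(s_r)\log\frac{\widehat{\om}(r)}{\widehat{\om}(s_r)}.
\]
Here \eqref{K} gives $\log\frac{\widehat{\om}(r)}{\widehat{\om}(s_r)}\ge\log C>0$, while Lemma~\ref{Lemma:weights-in-D-hat}(ii) applied to $\nu\in\DD$ (with $\frac{1-r}{1-s_r}=K$) yields $\widehat{\nu}(s_r)\ge\frac{1}{C(\nu)K^{\b}}\widehat{\nu}(r)$. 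Multiplying these two lower bounds gives $\widehat{\sigma}(r)\gtrsim\widehat{\nu}(r)$ uniformly in $r\in[0,1)$, which completes $\widehat{\sigma}\asymp\widehat{\nu}$.

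It remains to pass from $\widehat{\sigma}\asymp\widehat{\nu}$ to $\sigma\in\DDD$. First, $\widehat{\sigma}(0)\asymp\widehat{\nu}(0)<\infty$ shows $\sigma$ is integrable, hence a weight, and $\widehat{\sigma}\asymp\widehat{\nu}>0$ shows $\widehat{\sigma}>0$ throughout. The defining inequalities of $\DD$ and of $\Dd$ (via~\eqref{K}) involve only the tail function at the points $r$, $\frac{1+r}{2}$, and $1-\frac{1-r}{K}$, so they are invariant under comparability of tails; since $\nu\in\DD\cap\Dd$, the same inequalities hold for $\widehat{\sigma}$, and thus $\sigma\in\DDD$. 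I expect the only genuinely computational step to be the lower bound, where the right choice of the cut-off point $s_r$ — dictated precisely by the $\Dd$-constant of $\om$ — is the key idea; the remaining steps are a direct citation and a transfer of tail conditions across an $\asymp$.
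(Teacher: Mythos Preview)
Your proof is correct and follows essentially the same route as the paper's: the upper bound is Lemma~\ref{lemma:d-hat-new}(ii), the lower bound comes from truncating the integral near $r$ and combining the $\Dd$-condition on $\om$ (to make the $\log$-factor uniformly positive) with the $\DD$-condition on $\nu$ (to compare $\widehat{\nu}(s_r)$ with $\widehat{\nu}(r)$), and the final step transfers membership in $\DDD$ through the comparability of tails. The paper phrases the lower bound by pointing back to the $\rho_n^r$-decomposition in the proof of Lemma~\ref{lemma:d-hat-new}, whereas your explicit choice $s_r=1-\frac{1-r}{K}$ makes the same argument self-contained.
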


\begin{proof}
Lemma~\ref{lemma:d-hat-new}(ii) implies $\widehat{\sigma}\lesssim\widehat{\nu}$ on $[0,1)$. The argument used to prove (i)$\Rightarrow$(ii) in the said lemma shows that $\widehat{\sigma}\gtrsim\widehat{\nu}$ on $[0,1)$, provided $\om\in\Dd$ and $\nu\in\DDD$. Thus $\widehat{\sigma}\asymp\widehat{\nu}$, and $\sigma\in\DDD$ by Lemmas~\ref{Lemma:weights-in-D-hat}(ii) and~\ref{Lemma:weights-in-D}.
\end{proof}

The next lemma says that in many instances concerning $A^p$-norms we may replace $\om$ by $\widetilde\om=\widehat{\om}/(1-|\cdot|)$ if $\om\in\DDD$. This result has the flavor of radial Carleson measures and indeed can be established by appealing to the characterization of Carleson measures for the Bergman space $A^p_\om$ induced by $\om\in\DD$ given in \cite{PR2015/2}. That approach requires showing that the involved weights belong to $\DD$, which is of course the case, and thus involves more calculations than the simple proof given below.

\begin{lemma}\label{LemmaDeMierda}
Let $0<p<\infty$, $\om\in\DDD$ and $-\alpha<\kappa<\infty$, where $\alpha=\alpha(\om)>0$ is that of Lemma~\ref{Lemma:weights-in-D}. Then
    \begin{equation}\label{11}
    \int_\D|f(z)|^p(1-|z|)^{\kappa}\om(z)\,dA(z)\asymp\int_\D|f(z)|^p(1-|z|)^{\kappa-1}\widehat{\om}(z)\,dA(z),\quad f\in\H(\D).
    \end{equation}
\end{lemma}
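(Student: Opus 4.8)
The plan is to reduce the two-dimensional identity to a one-dimensional statement about integral means and then run a dyadic ring decomposition adapted to $\om$. First I would pass to polar coordinates: writing $M_p^p(r,f)=\frac{1}{2\pi}\int_0^{2\pi}|f(re^{i\theta})|^p\,d\theta$ and integrating out the angle, each side of \eqref{11} becomes $2\int_0^1 M_p^p(r,f)\,h(r)\,r\,dr$, with $h(r)=(1-r)^\kappa\om(r)$ on the left and $h(r)=(1-r)^{\kappa-1}\widehat{\om}(r)$ on the right. Hence it suffices to prove, for the non-decreasing function $m(r):=M_p^p(r,f)$ (Hardy's convexity theorem), the equivalence
\[
\int_0^1 m(r)(1-r)^\kappa\om(r)\,r\,dr\asymp\int_0^1 m(r)(1-r)^{\kappa-1}\widehat{\om}(r)\,r\,dr.
\]

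Next I would fix $K=K(\om)>1$ (large, to be specified) and use the points $\rho_n=\rho_n(\om,K)$ determined by $\widehat{\om}(\rho_n)=\widehat{\om}(0)K^{-n}$, splitting $[0,1)=\bigcup_n[\rho_n,\rho_{n+1})$. On the $n$-th ring three facts are available: $\widehat{\om}(r)\asymp\widehat{\om}(\rho_n)$ and $\int_{\rho_n}^{\rho_{n+1}}\om=\widehat{\om}(\rho_n)(1-K^{-1})\asymp\widehat{\om}(\rho_n)$ directly from the definition, and $1-r\asymp1-\rho_n$ with constants independent of $n$ --- the lower estimate $1-\rho_{n+1}\gtrsim1-\rho_n$ coming from $\om\in\Dd$ via Lemma~\ref{Lemma:weights-in-D}, the upper estimate from $\om\in\DD$ via Lemma~\ref{Lemma:weights-in-D-hat}(v). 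These give $(1-r)^\kappa\asymp(1-\rho_n)^\kappa$ and $\rho_{n+1}-\rho_n\asymp1-\rho_n$ on each ring. Using the monotonicity of $m$ to replace $m(r)$ on $[\rho_n,\rho_{n+1})$ by $m(\rho_n)$ from below and $m(\rho_{n+1})$ from above, both integrals get sandwiched between $\sum_n m(\rho_n)a_n$ and $\sum_n m(\rho_{n+1})a_n$, where $a_n=(1-\rho_n)^\kappa\widehat{\om}(\rho_n)$.

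The hard part will be closing this two-sided estimate, since the upper and lower sums differ by an index shift in $m$. I would resolve it through the comparability of consecutive rings, $a_{n-1}\asymp a_n$, which holds because $\widehat{\om}(\rho_{n-1})=K\widehat{\om}(\rho_n)$ and $1-\rho_{n-1}\asymp1-\rho_n$; reindexing then gives $\sum_n m(\rho_{n+1})a_n=\sum_{k\ge1}m(\rho_k)a_{k-1}\asymp\sum_k m(\rho_k)a_k$, so that both integrals are comparable to $\sum_n m(\rho_n)a_n$, and hence to each other. The genuinely delicate point hiding in this scheme is the convergence of the series when $\kappa<0$: here $a_n$ must decay geometrically, and checking this forces one to take $K$ large and to use the hypothesis $\kappa>-\alpha$ (with $\alpha=\alpha(\om)$ from Lemma~\ref{Lemma:weights-in-D}), which is exactly what guarantees that the growth of $(1-\rho_n)^\kappa$ is beaten by the decay $\widehat{\om}(\rho_n)=\widehat{\om}(0)K^{-n}$. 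Finally I would treat the innermost ring separately: there $r$ degenerates and $\om$ and $\widehat{\om}$ are no longer interchangeable, but $1-r\asymp1$ and $\widehat{\om}\asymp\widehat{\om}(0)$ there, and its whole contribution is dominated by the first boundary ring, so it is harmless. I expect this convergence bookkeeping for negative $\kappa$ to be the only subtle step; the rest is the routine ring estimate for weights in $\DDD$.
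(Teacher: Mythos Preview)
Your argument is correct but follows a genuinely different route from the paper's. The paper integrates by parts against $\frac{\partial}{\partial r}M_p^p(r,f)$ and reduces everything to the pointwise tail comparison
\[
\int_r^1(1-t)^\kappa\om(t)\,dt\;\asymp\;\int_r^1(1-t)^{\kappa-1}\widehat{\om}(t)\,dt\;\asymp\;(1-r)^\kappa\widehat{\om}(r),\quad 0\le r<1,
\]
which is then checked via Lemma~\ref{Lemma:weights-in-D-hat}(ii) and Lemma~\ref{le:nuhiyperbolic}. Your dyadic-ring discretisation is more elementary in that it uses only the monotonicity of $M_p^p(r,f)$ and the basic ring estimates for $\DDD$ weights, bypassing both the integration by parts and the auxiliary Lemma~\ref{le:nuhiyperbolic}; the trade-off is the explicit bookkeeping with the sequence $\rho_n$. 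One small correction to your self-assessment: the convergence of $\sum_n a_n$ that you single out as the delicate step is not actually needed for the equivalence, since your index-shift argument via $a_{n-1}\asymp a_n$ (together with $m(\rho_1)a_1\le m(\rho_2)a_1\asymp m(\rho_2)a_2$) works term by term whether or not the series converge. In the paper's approach the hypothesis $\kappa>-\alpha$ is what makes $(1-|\cdot|)^{\kappa-1}\widehat{\om}$ an integrable weight, so that the integration by parts is legitimate; in your scheme the comparability goes through for every real $\kappa$, and $\kappa>-\alpha$ only guarantees that both sides are finite when $f$ is a constant. Note also that you do need Lemma~\ref{Lemma:weights-in-D-hat}(v) (the $\DD$ input), not for an ``upper estimate'' on $1-r$ but to obtain $\rho_{n+1}-\rho_n\gtrsim 1-\rho_n$, which is essential for the lower bound on $R_n$.
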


\begin{proof}
The function $(1-|\cdot|)^{\kappa-1}\widehat{\om}$ is a weight for each $\kappa>-\alpha$ by Lemma~\ref{Lemma:weights-in-D}. Therefore an integration by parts shows that \eqref{11} is equivalent to
    $$
    \int_0^1\frac{\partial}{\partial r}M_p^p(r,f)\left(\int_r^1(1-t)^{\kappa}\om(t)\,dt\right)\,dr\asymp
    \int_0^1\frac{\partial}{\partial r}M_p^p(r,f)\left(\int_r^1(1-t)^{\kappa-1}\widehat{\om}(t)\,dt\right)\,dr.
    $$
Another integration by parts reveals that both integrals from $r$ to 1 above are bounded by a constant times $\widehat{\om}(r)(1-r)^\kappa$. But Lemma~\ref{Lemma:weights-in-D-hat}(ii) implies
    $$
    \int_r^1(1-t)^{\kappa-1}\widehat{\om}(t)\,dt
    \gtrsim\frac{\widehat{\om}(r)}{(1-r)^{\b(\om)}}\int_r^1(1-t)^{\kappa-1+\b(\om)}\,dt
    \asymp\widehat{\om}(r)(1-r)^\kappa,\quad 0\le r<1,
    $$
and
    \begin{equation*}
    \begin{split}
    \int_r^1(1-t)^{\kappa}\om(t)\,dt\gtrsim
    \frac{\widehat{\om}(r)}{(1-r)^{\b(\om)}}\int_r^1\frac{\om(t)(1-t)^{\kappa+\b(\om)}}{\widehat{\om}(t)}\,dt
    \asymp\widehat{\om}(r)(1-r)^\kappa,\quad 0\le r<1,
    \end{split}
    \end{equation*}
by Lemma~\ref{le:nuhiyperbolic}. The assertion follows.
\end{proof}

The last auxiliary results shows that each radial weight in the Bekoll\'e-Bonami class $B_q$ belongs to $\DDD$, and for each $\nu\in\DDD$ the maximal Bergman projection
    $$
    P^+_\nu(f)(z)=\int_\D f(\z)|B^\nu_z(\z)|\nu(\z)\,dA(\z),\quad z\in\D,
    $$
is bounded on $L^q_\nu$. It is worth noticing that obviously $\DDD\not\subset\cup_{1<q<\infty}B_q$ because $\nu\in\DDD$ may vanish on a set of positive measure.

\begin{proposition}\label{pr:bqDDD}
Let $1<q<\infty$ and $\nu\in B_q$ a radial weight. Then $\nu\in\DDD$. Moreover, $P^+_\nu:L^q_\nu\to L^q_\nu$ is bounded for all $\nu\in\DDD$.
\end{proposition}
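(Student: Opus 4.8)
The proposition has two parts, and I would treat them separately. For the first assertion, that a radial weight $\nu\in B_q$ belongs to $\DDD=\DD\cap\Dd$, the plan is to extract the doubling information directly from the Bekoll\'e--Bonami condition by testing it against the characteristic functions of Carleson squares. Recall that $\nu\in B_q(\eta)$ means that the averages of $\nu$ and of $\nu^{-1/(q-1)}$ over Bergman--Carleson boxes satisfy a joint H\"older-type bound. Testing this condition on nested Carleson squares $S(r)$ and $S(\frac{1+r}{2})$ should convert the $B_q$ inequality into a comparison between $\widehat{\nu}(r)$ and $\widehat{\nu}(\frac{1+r}{2})$, giving the $\DD$ condition; the reverse comparison \eqref{K} defining $\Dd$ should emerge from the same inequality by exploiting the reciprocal weight factor $\nu^{-1/(q-1)}$, which is what prevents $\widehat{\nu}$ from decaying too slowly. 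The mechanics here are a careful bookkeeping of the $B_q$ averages on two scales, so I would isolate the two halves of the $B_q$ condition and read off $\DD$ from one and $\Dd$ from the other.

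For the second assertion, the boundedness of $P^+_\nu$ on $L^q_\nu$ for every $\nu\in\DDD$, my plan is to run a Schur test tailored to the radial structure. The natural Schur weight to try is a power of $1-|z|$ modulated by $\widehat{\om}$-type factors; concretely I would look for an exponent $s$ so that the test function $h(z)=(1-|z|)^{-s}$ (or a comparable radial quantity) satisfies the two Schur inequalities
\begin{equation*}
\int_\D |B^\nu_z(\z)|\,h(\z)^{q'}\nu(\z)\,dA(\z)\lesssim h(z)^{q'},\qquad
\int_\D |B^\nu_z(\z)|\,h(z)^{q}\nu(z)\,dA(z)\lesssim h(\z)^{q}.
\end{equation*}
The key input making this feasible is the sharp control on the integral means of the kernel from \cite[Theorem~1]{PR2016/1}, referenced in the text, which replaces the explicit kernel formula unavailable in the general radial setting. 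Using those estimates, each Schur integral reduces to a one-dimensional integral of the form $\int_0^1 (\text{kernel mean})\,(1-t)^{-s\cdot}\,\nu(t)\,dt$, which I would evaluate by the $\DD$ and $\Dd$ estimates of Lemmas~\ref{Lemma:weights-in-D-hat} and \ref{Lemma:weights-in-D}, exactly as in the model computations carried out in Lemma~\ref{lemma:d-hat-new} and Lemma~\ref{LemmaDeMierda}.

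The main obstacle, and the place where this argument genuinely departs from the standard-weight blueprint, is the control of the absolute value $|B^\nu_z(\z)|$ of the kernel. For standard weights one has the clean formula $(1-\overline z\z)^{-(2+\eta)}$ and its modulus is trivially estimated; here the kernel may vanish and admits no closed form, so the Schur integrals must be estimated through the integral-mean bounds of \cite{PR2016/1} rather than a pointwise formula. I expect the delicate point to be choosing the Schur exponent $s$ so that both inequalities close simultaneously: the admissible range of $s$ will be pinned down by the doubling exponents $\b(\nu)$ and $\a(\nu)$ of Lemmas~\ref{Lemma:weights-in-D-hat}(ii) and \ref{Lemma:weights-in-D}, and verifying that this range is nonempty for every $\nu\in\DDD$ is the crux. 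Once the exponent is fixed, the two integral estimates follow from the weight lemmas already established, and the Schur test delivers boundedness of $P^+_\nu$, hence of $P_\nu$, on $L^q_\nu$.
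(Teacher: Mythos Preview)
Your plan is correct and matches the paper's approach for both assertions: the paper writes down the $B_q(\eta)$ inequality over $S(a)$ and notes that for radial $\nu$ it ``easily implies'' $\nu\in\DDD$, and then proves the boundedness of $P^+_\nu$ by Schur's test combined with the kernel estimates of \cite[Theorem~1]{PR2016/1}.

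The one concrete difference worth noting is the choice of Schur weight. You propose $h(z)=(1-|z|)^{-s}$ and anticipate having to locate an admissible range for $s$ in terms of $\alpha(\nu)$ and $\beta(\nu)$. The paper instead takes $h=\widehat{\nu}^{\,-1/(qq')}$, which is cleaner for two reasons. First, the inner integral evaluates exactly: $\int_t^1 h(s)^{q'}\nu(s)\,ds=\int_t^1\widehat{\nu}(s)^{-1/q}\nu(s)\,ds\asymp\widehat{\nu}(t)^{1/q'}$, so the Schur estimate reduces to the single inequality $\int_0^r\widehat{\nu}(t)^{-1/q}(1-t)^{-1}\,dt\lesssim\widehat{\nu}(r)^{-1/q}$, which is immediate from Lemma~\ref{Lemma:weights-in-D}. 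Second, this $h$ is symmetric under $q\leftrightarrow q'$, so the two Schur inequalities are literally the same computation and no exponent range needs to be checked. Your pure-power choice can be pushed through using the $\DDD$ bounds, but it obscures this structure; $\widehat{\nu}^{-1/(qq')}$ is the natural test function here.
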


\begin{proof}
If $\nu\in B_q$, then by \cite{BB} there exists $\beta>-1$ such that
    \begin{equation*}
    \begin{split}
    &\left(\int_{S(a)}\nu(z)\,dA(z)\right)^\frac1q
    \left(\int_{S(a)}\left(\frac{(1-|z|)^\b}{\nu(z)}\right)^{\frac{q'}{q}}(1-|z|)^\b\,dA(z)\right)^{\frac{1}{q'}}\lesssim(1-|a|)^{(2+\b)},\quad a\in\D.
    \end{split}
    \end{equation*}
Since $\nu$ is radial, this condition easily implies $\nu\in\DDD$.

Let now $1<q<\infty$ and $\nu\in\DDD$, and define $h=\widehat{\nu}^{-\frac{1}{qq'}}$. Then $\int_t^1h(s)^{q'}\nu(s)\,ds\asymp \widehat{\nu}(t)^{\frac{1}{q'}}$ for all $0\le t<1$. Therefore Lemma~\ref{Lemma:weights-in-D} yields
    \begin{equation}\label{jap:1111}
    \int_0^r\frac{\int_t^1h(s)^{q'}\nu(s)\,ds}{\widehat{\nu}(t)(1-t)}\,dt
    \asymp\int_0^r\frac{dt}{\widehat{\nu}(t)^\frac1q(1-t)}
    \lesssim\frac{1}{\widehat{\nu}(r)^\frac1q}=h^{q'}(r),\quad 0\le r <1.
    \end{equation}
Moreover, by symmetry, \eqref{jap:1111} with $q'$ in place of $q$ is satisfied. Since $\nu\in\DD$, we may apply \cite[Theorem~1]{PR2016/1} and \eqref{jap:1111} to deduce
    $$
    \int_{\D}|B^\nu_z(\z)|h^{p'}(\z)\nu(\z)\,dA(\z)\lesssim h^{p'}(z),\quad z\in\D,
    $$
and
    $$
    \int_{\D}|B^\nu_z(\z)|h^{p}(z)\nu(z)\,dA(z)\lesssim h^{p}(\z),\quad \z\in\D.
    $$
It follows from  Schur's test~\cite[Theorem~3.6]{Zhu} that the maximal Bergman projection $P^+_\nu:L^p_\nu\to L^p_\nu$ is bounded.
\end{proof}

\section{Some spaces of functions}
\label{Some spaces of functions}

Recall that
    \begin{equation}\label{eq:gamma}
    \g(z)=\g_{\om,\nu,p,q}(z)=\frac{\widehat{\nu}(z)^\frac1q(1-|z|)^{\frac1q}}{\widehat{\om}(z)^\frac1p(1-|z|)^{\frac1p}},\quad z\in\D,
    \end{equation}
and $\fgrnu(z)=\frac{\int_{\Delta(z,r)}f(\z)\nu(\z)\,dA(\z)}{\nu(\Delta(z,r))}$ for $f\in L^1_{\nu,{\rm loc}}$, and
    $$
    \MO_{\nu,q,r}(f)(z)=\left(\frac{1}{\nu(\Delta(z,r))} \int_{\Delta(z,r)}|f(\z)-\fgrnu(z)|^q\nu(\z)\,dA(\z)\right)^{\frac{1}{q}}
    $$
for all $z\in\D$. If $\nu\in\Dd$, then by the definition there exists $K=K(\nu)>1$ and $C=C(\nu)>1$ such that
    $$
    \int_r^{1-\frac{1-r}{K}}\nu(s)\,ds\ge(C-1)\widehat{\nu}\left(1-\frac{1-r}{K}\right)>0,\quad 0\le r<1.
    $$
It follows that there exists $r_\nu\in(0,\infty)$ such that $\nu(\Delta(z,r))>0$ for all $z\in\D$ if $r\ge r_\nu$.

The space $\BMO(\Delta)=\BMO(\Delta)_{\om,\nu,p,q,r}$ consists of $f\in L^q_{\nu,{\rm loc}}$ such that
    $$
    \|f\|_{\BMO(\Delta)}=\sup_{z\in\D}\left(\MO_{\nu,q,r}(f)(z)\g(z)\right)<\infty.
    $$
The following lemma is easy to establish; see \cite[Lemma~3.1]{PZZ} for a similar result.

\begin{lemma}\label{le:BMOr}
Let $1\le p,q<\infty$, $\om$ a radial weight, $\nu\in\Dd$ and $r_\nu\le r<\infty$.
Then
    $$
    \MO_{\nu,q,r}(f)(z)\le2\left(\frac{1}{\nu(\Delta(z,r))}\int_{\Delta(z,r)}|f(\z)-\lambda|^q\nu(\z)\,dA(\z)\right)^\frac1q,\quad z\in\D,\quad \lambda\in\C,\quad f\in L^q_\nu,
    $$
and therefore $f\in L^q_\nu$ belongs to $\BMO(\Delta)$ if and only if for each $z\in\D$ there exists $\lambda_z\in\C$ such that
    $$
    \sup_{z\in\D}\left(\frac{\gamma(z)^q}{\nu(\Delta(z,r))}\int_{\Delta(z,r)}|f(\z)-\lambda_z|^q\nu(\z)\,dA(\z)\right)<\infty.
    $$
\end{lemma}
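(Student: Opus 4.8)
The plan is to reduce the whole statement to the elementary fact that the $\nu$-average over $\Delta(z,r)$ is, up to the factor $2$, as good a choice as any other constant in the $L^q(\nu)$-sense. Before anything else I would record that the hypotheses $\nu\in\Dd$ and $r\ge r_\nu$ guarantee $\nu(\Delta(z,r))>0$ for every $z\in\D$; this is exactly the content of the displayed inequality preceding the lemma. Consequently $\fgrnu(z)$ and the normalized integrals are well defined, and $d\mu_z=\nu\,dA/\nu(\Delta(z,r))$ is a genuine probability measure supported on $\Delta(z,r)$, which is the object all the estimates will be phrased against.

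For the first inequality, fix $z\in\D$ and $\lambda\in\C$. Since $\fgrnu(z)=\int_{\Delta(z,r)}f\,d\mu_z$, I would write $\fgrnu(z)-\lambda=\int_{\Delta(z,r)}(f-\lambda)\,d\mu_z$ and apply Jensen's inequality (using $q\ge1$, the convexity of $t\mapsto t^q$, and that $\mu_z$ is a probability measure) to obtain $\sabs{\fgrnu(z)-\lambda}\le\left(\int_{\Delta(z,r)}\sabs{f-\lambda}^q\,d\mu_z\right)^{1/q}$. Minkowski's inequality in $L^q(\mu_z)$ then gives $\MO_{\nu,q,r}(f)(z)=\nm{f-\fgrnu(z)}_{L^q(\mu_z)}\le\nm{f-\lambda}_{L^q(\mu_z)}+\sabs{\lambda-\fgrnu(z)}$, where the constant $\sabs{\lambda-\fgrnu(z)}$ equals its own $L^q(\mu_z)$-norm. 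Bounding this last term by $\nm{f-\lambda}_{L^q(\mu_z)}$ via the Jensen step just performed, the two contributions combine into precisely the claimed factor $2$ times $\left(\frac{1}{\nu(\Delta(z,r))}\int_{\Delta(z,r)}\sabs{f-\lambda}^q\nu\,dA\right)^{1/q}$.

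The equivalence is then immediate in both directions. If $f\in\BMO(\Delta)$, choosing $\lambda_z=\fgrnu(z)$ turns the supremum in the statement into $\nm{f}_{\BMO(\Delta)}^q$, which is finite. Conversely, given constants $\lambda_z$ for which the displayed supremum is finite, I would multiply the first inequality by $\g(z)$, raise to the $q$-th power and take the supremum over $z\in\D$, which yields $\nm{f}_{\BMO(\Delta)}\le2\left(\sup_{z\in\D}\frac{\g(z)^q}{\nu(\Delta(z,r))}\int_{\Delta(z,r)}\sabs{f-\lambda_z}^q\nu\,dA\right)^{1/q}<\infty$, so that $f\in\BMO(\Delta)$.

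There is essentially no hard step here, in line with the remark that the lemma is easy to establish; the only point requiring genuine care is the well-definedness of every average, i.e. the uniform strict positivity of $\nu(\Delta(z,r))$, which is precisely why the membership $\nu\in\Dd$ and the threshold $r\ge r_\nu$ appear among the hypotheses. Everything else is the textbook comparison of a mean against an arbitrary constant, and the constant $2$ is exactly what that comparison produces.
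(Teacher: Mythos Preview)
Your argument is correct and is exactly the standard one: the paper itself does not supply a proof, merely noting that the lemma ``is easy to establish'' and pointing to \cite[Lemma~3.1]{PZZ} for a similar result, so there is nothing further to compare.
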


For $0<p,q<\infty$, $0\le \tau<\infty$ and radial weights $\om,\nu$, let
    \begin{equation}\label{gamma}
    \Gamma_\tau(z,\z)=\frac{\left(\frac{|1-\overline{z}\z|^2}{\max\{1-|z|^2,1-|\z|^2\}}\right)^{\frac{1}{p}-\frac{\tau+1}{q}}
    \widehat{\om}\left(1-\frac{2|1-\overline{z}\z|^2}{\max\{1-|z|^2,1-|\z|^2\}}\right)^{\frac{1}{p}}}
    {\min\left\{ \frac{\widehat{\nu}(z)}{(1-|z|)^\tau},
    \frac{\widehat{\nu}(\z)}{(1-|\z|)^\tau}\right\}^{\frac1q}},\quad z,\z\in\D,
    \end{equation}
with the understanding that $\widehat{\om}(t)=\widehat{\om}(0)$ when $t<0$. The following lemma explains the behavior of $\Gamma_\tau$ near the diagonal.

\begin{lemma}\label{diag}
Let $0<p,q,r<\infty$, $0\le \tau<\infty$ and $\om,\nu\in\DD$.
Then
    $$
    \Gamma_\tau(z,\z)\asymp \gamma(z)^{-1}\asymp \gamma(\z)^{-1},\quad \beta(z,\z)\le r.
    $$
\end{lemma}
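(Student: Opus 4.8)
The plan is to reduce everything to the two elementary consequences of the hypothesis $\beta(z,\z)\le r$: since $\z$ then lies in a hyperbolic disc of fixed radius around $z$, the standard estimates
$$
1-|z|\asymp 1-|\z|\asymp |1-\overline z\z|
$$
hold with comparison constants depending only on $r$. First I would record the auxiliary observation that for a radial $\om\in\DD$ one has $\widehat{\om}(s)\asymp\widehat{\om}(t)$ whenever $1-s\asymp 1-t$: the inequality in one direction is just the monotonicity of $\widehat{\om}$, and in the other it is exactly Lemma~\ref{Lemma:weights-in-D-hat}(ii) applied to the smaller of $s,t$. Applying this to both $\om$ and $\nu$ gives $\widehat{\om}(z)\asymp\widehat{\om}(\z)$ and $\widehat{\nu}(z)\asymp\widehat{\nu}(\z)$, and, via the definition \eqref{eq:gamma}, also $\gamma(z)^{-1}\asymp\gamma(\z)^{-1}$. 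Hence it suffices to prove $\Gamma_\tau(z,\z)\asymp\gamma(z)^{-1}$.

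Next I would estimate the inner quantity
$$
d:=\frac{|1-\overline z\z|^2}{\max\{1-|z|^2,1-|\z|^2\}}\asymp 1-|z|\asymp 1-|\z|,
$$
which follows at once from the three displayed comparabilities. This handles the factor $d^{\frac1p-\frac{\tau+1}{q}}\asymp(1-|z|)^{\frac1p-\frac{\tau+1}{q}}$ in the numerator. For the denominator, the two quantities inside the minimum are $\widehat{\nu}(z)/(1-|z|)^\tau$ and $\widehat{\nu}(\z)/(1-|\z|)^\tau$, which are equivalent by the previous paragraph; so the minimum is $\asymp\widehat{\nu}(z)/(1-|z|)^\tau$, and the denominator reduces to $\big(\widehat{\nu}(z)/(1-|z|)^\tau\big)^{1/q}$.

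The one step requiring care is the kernel-type factor $\widehat{\om}\big(1-2d\big)^{1/p}$. Since the distance of the point $1-2d$ to the boundary is $2d\asymp 1-|z|$, the auxiliary observation yields $\widehat{\om}(1-2d)\asymp\widehat{\om}(z)$ whenever $1-2d\ge0$. When $1-2d<0$ — which happens only for $z$ near the origin, where $d\gtrsim 1$ forces $1-|z|\gtrsim 1$ — the convention $\widehat{\om}(1-2d)=\widehat{\om}(0)$, together with the fact that $|z|$ then lies in a fixed compact subset of $[0,1)$ on which $\widehat{\om}$ is bounded above and below by positive constants, again gives $\widehat{\om}(1-2d)\asymp\widehat{\om}(z)$. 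This is the main (indeed the only genuine) obstacle.

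Collecting the three estimates, the numerator of $\Gamma_\tau$ is $\asymp(1-|z|)^{\frac1p-\frac{\tau+1}{q}}\widehat{\om}(z)^{1/p}$, so dividing by the denominator gives
$$
\Gamma_\tau(z,\z)\asymp (1-|z|)^{\frac1p-\frac{\tau+1}{q}+\frac{\tau}{q}}\,\widehat{\om}(z)^{1/p}\,\widehat{\nu}(z)^{-1/q}.
$$
Simplifying the exponent of $1-|z|$ via $\frac1p-\frac{\tau+1}{q}+\frac{\tau}{q}=\frac1p-\frac1q$ produces exactly $\gamma(z)^{-1}$, after comparing with \eqref{eq:gamma}. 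Finally, the symmetry of $\Gamma_\tau$ in $z,\z$ together with $\gamma(z)^{-1}\asymp\gamma(\z)^{-1}$ yields $\Gamma_\tau(z,\z)\asymp\gamma(\z)^{-1}$ as well, completing the proof.
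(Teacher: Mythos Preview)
Your proof is correct and follows essentially the same approach as the paper's: both arguments rest on the elementary comparabilities $1-|z|\asymp 1-|\z|\asymp|1-\overline z\z|$ for $\beta(z,\z)\le r$, combined with Lemma~\ref{Lemma:weights-in-D-hat}(ii) to control $\widehat{\om}$ and $\widehat{\nu}$ at comparable distances to the boundary. Your explicit treatment of the case $1-2d<0$ (forcing $|z|$ into a compact subset) is a slightly more detailed version of what the paper absorbs into the convention $\widehat{\om}(t)=\widehat{\om}(0)$ for $t<0$ together with the inequality $\widehat{\om}(1-M_r(1-|z|))\le CM_r^{\beta}\widehat{\om}(z)$.
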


\begin{proof}
Clearly
    $$
    |1-\overline{z}\z|\asymp1-|z|\asymp1-|\z|,\quad \b(z,\z)\le r,
    $$
and hence there exist $0<m_r<1<M_r<\infty$ such that
    $$
    m_r(1-|z|)\le\frac{2|1-\overline{z}\z|^2}{\max\{1-|z|^2,1-|\z|^2\}}\le M_r(1-|z|),\quad \b(z,\z)\le r.
    $$
Since $\om\in\DD$ by the hypothesis, and $\widehat{\om}(t)=\widehat{\om}(0)$ for $t<0$, Lemma~\ref{Lemma:weights-in-D-hat}(ii) implies
    $$
    \widehat{\om}(z)
    \le\frac{C}{m_r^\b}\widehat{\om}(1-m_r(1-|z|))
    \le\frac{C}{m_r^\b}\widehat{\om}\left(1-\frac{2|1-\overline{z}\z|^2}{\max\{1-|z|^2,1-|\z|^2\}}\right),\quad \b(z,\z)\le r,
    $$
and
    $$
    \widehat{\om}\left(1-\frac{2|1-\overline{z}\z|^2}{\max\{1-|z|^2,1-|\z|^2\}}\right)
    \le CM_r^\b\widehat{\om}(1-M_r(1-|z|))
    \le CM_r^\b\widehat{\om}(z),\quad \b(z,\z)\le r,
    $$
for some $C=C(\om)>0$ and $\b=\b(\om)>0$. Further, $\widehat{\nu}(z)\asymp\widehat{\nu}(\z)$ and $\widehat{\om}(z)\asymp\widehat{\om}(\z)$ if $\b(z,\z)\le r$ by Lemma~\ref{Lemma:weights-in-D-hat}(ii).
The assertion follows from these estimates.
\end{proof}

For continuous $f:\D\to\C$ and $0<r<\infty$, define
    $$
    \Omega_r f(z)=\sup\{|f(z)-f(\z)|:\beta(z,\z)<r\},\quad z\in\D,
    $$
and let $\BO(\Delta)=\BO(\Delta)_{\om,\nu,p,q,r}$ denote the space of those $f$ such that
    $$
    \|f\|_{\BO(\Delta)}=\sup_{z\in\D}\left(\Omega_r f(z)\gamma(z)\right)<\infty.
    $$
Lemma~\ref{bo} shows that the space $\BO(\Delta)=\BO(\Delta)_{\om,\nu,p,q,r}$ is independent of $r$.

\begin{lemma}\label{bo}
Let $0<p\le q<\infty$, $0<r<\infty$, $\om,\nu\in\Dd$ and $\g(z)=\g_{\om,\nu,p,q}(z)=\frac{\widehat{\nu}(z)^\frac1q(1-|z|)^{\frac1q}}{\widehat{\om}(z)^\frac1p(1-|z|)^{\frac1p}}$.
Let $f:\D\to\C$ be continuous, and $0<\tau<\min\{q\a(\om)/p, \a(\nu)\}$, where $\alpha(\nu)$ and $\a(\om)$ are those from Lemma~\ref{Lemma:weights-in-D}. Then the following statements are equivalent:
    \begin{itemize}
    \item[\rm (i)] $f\in\BO(\Delta)$;
    \item[\rm (ii)] $|f(z)-f(\z)|\lesssim\|f\|_{\BO(\Delta)}(1+\beta(z,\z))\Gamma_\tau(z,\z)$ for all $z,\z\in\D$.
    \end{itemize}
\end{lemma}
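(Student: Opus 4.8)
\emph{Plan.} I would treat the two implications very asymmetrically. The implication (ii)$\Rightarrow$(i) is immediate from the behaviour of $\Gamma_\tau$ on the diagonal, while (i)$\Rightarrow$(ii) is the substantive part and rests on a chaining argument along hyperbolic geodesics, the summation being controlled by the $\Dd$-exponents $\a(\om),\a(\nu)$ from Lemma~\ref{Lemma:weights-in-D}. For (ii)$\Rightarrow$(i), I would restrict (ii) to pairs with $\beta(z,\z)<r$; arguing as in Lemma~\ref{diag} one has $\Gamma_\tau(z,\z)\asymp\gamma(z)^{-1}$ there, so (ii) yields $|f(z)-f(\z)|\lesssim\|f\|_{\BO(\Delta)}(1+r)\gamma(z)^{-1}$, and taking the supremum over such $\z$ gives $\Omega_rf(z)\gamma(z)\lesssim\|f\|_{\BO(\Delta)}$, i.e. (i).

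For (i)$\Rightarrow$(ii), fix $z,\z\in\D$. If $\beta(z,\z)\le r$, then by the diagonal comparison (the lower bound $\Gamma_\tau\gtrsim\gamma^{-1}$, which follows directly from $\om,\nu\in\Dd$) one gets $|f(z)-f(\z)|\le\Omega_rf(z)\le\|f\|_{\BO(\Delta)}\gamma(z)^{-1}\lesssim\|f\|_{\BO(\Delta)}\Gamma_\tau(z,\z)$, and the claim holds since $1+\beta(z,\z)\ge1$. So assume $\beta(z,\z)>r$. I would join $z$ to $\z$ by the hyperbolic geodesic and pick points $z=z_0,z_1,\dots,z_N=\z$ on it with $\beta(z_j,z_{j+1})<r$ and $N\asymp1+\beta(z,\z)$. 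Telescoping then gives
\[
|f(z)-f(\z)|\le\sum_{j=0}^{N-1}|f(z_j)-f(z_{j+1})|\le\sum_{j=0}^{N-1}\Omega_rf(z_j)\le\|f\|_{\BO(\Delta)}\sum_{j=0}^{N-1}\gamma(z_j)^{-1},
\]
so everything reduces to proving $\sum_j\gamma(z_j)^{-1}\lesssim(1+\beta(z,\z))\Gamma_\tau(z,\z)$; in fact I expect the stronger bound $\lesssim\Gamma_\tau(z,\z)$.

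Write $s_j=1-|z_j|$ and $W=|1-\overline{z}\z|^2/\max\{1-|z|^2,1-|\z|^2\}$. Two geometric facts drive the estimate: the profile $j\mapsto s_j$ is unimodal with maximal value $s_{\rm top}\asymp W$ (and $s_{\rm top}\le 2W$), and, since moving one unit of hyperbolic distance changes $s$ by a bounded factor, each dyadic band $\{s\asymp 2^{-k}\}$ below $s_{\rm top}$ contains only $\O(1)$ of the $z_j$. I would first factor the denominator: as $\tau<\a(\nu)$, Lemma~\ref{Lemma:weights-in-D} shows $\widehat\nu(s)/s^\tau$ is almost increasing, so along the chain $\widehat\nu(z_j)/s_j^\tau\gtrsim\min\{\widehat\nu(z)/(1-|z|)^\tau,\widehat\nu(\z)/(1-|\z|)^\tau\}=:D$, whence $\gamma(z_j)^{-1}\lesssim D^{-1/q}s_j^{\frac1p-\frac{\tau+1}q}\widehat\om(z_j)^{1/p}$. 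Next I would dominate the remaining sum by its top term: Lemma~\ref{Lemma:weights-in-D} applied to $\om$ gives $\widehat\om(z_j)/\widehat\om(z_{\rm top})\lesssim(s_j/s_{\rm top})^{\a(\om)}$, so each summand is at most a constant times $(s_j/s_{\rm top})^{E}$ times the top term, where $E=\frac{1+\a(\om)}p-\frac{\tau+1}q$. The assumptions $\tau<q\a(\om)/p$ and $p\le q$ force $E>0$, so summing the geometric series over the dyadic bands yields $\sum_j s_j^{\frac1p-\frac{\tau+1}q}\widehat\om(z_j)^{1/p}\lesssim s_{\rm top}^{\frac1p-\frac{\tau+1}q}\widehat\om(z_{\rm top})^{1/p}$. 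Finally $s_{\rm top}\asymp W$, and monotonicity of $\widehat\om$ together with $s_{\rm top}\le 2W$ gives $\widehat\om(z_{\rm top})\le\widehat\om(1-2W)$; combining, $\sum_j\gamma(z_j)^{-1}\lesssim D^{-1/q}W^{\frac1p-\frac{\tau+1}q}\widehat\om(1-2W)^{1/p}=\Gamma_\tau(z,\z)$.

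The main obstacle is the summation step, where one must simultaneously (a) identify the top scale of the geodesic with $W$ and check $s_{\rm top}\le 2W$, so that the monotone bound reaches exactly the argument $1-2W$ appearing in $\Gamma_\tau$; (b) control the $\nu$-factor by the very $\min$ defining the denominator of $\Gamma_\tau$; and (c) arrange that the exponent $E$ of the resulting geometric series is positive. Point (c) is precisely where the two restrictions on $\tau$ enter, the bound $\tau<q\a(\om)/p$ making the $\om$-series summable and $\tau<\a(\nu)$ making $\widehat\nu(s)/s^\tau$ monotone. The unimodality of the scale profile and the $\O(1)$-per-band counting are routine hyperbolic geometry, which I would quote rather than belabor.
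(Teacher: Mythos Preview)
Your chaining argument along the geodesic, the telescoping, and the use of the $\Dd$-monotonicity of $\widehat\nu(r)/(1-r)^\tau$ (from $\tau<\a(\nu)$) are exactly what the paper does. The difference lies in how the sum $\sum_j\gamma(z_j)^{-1}$ is estimated. The paper does \emph{not} try to remove the factor $1+\beta(z,\z)$: it simply bounds each individual term $\gamma(z_j)^{-1}$ by $\Gamma_\tau(z,\z)$, using (a) $s_j\le 2W$ together with $\frac1p-\frac1q\ge0$ for the power $s_j^{1/p-1/q}$, (b) the essential monotonicity of $\widehat\om(r)/(1-r)^{p\tau/q}$ (from $\tau<q\a(\om)/p$) and again $s_j\le 2W$ for the $\om$-factor, and (c) the essential monotonicity of $\widehat\nu(r)/(1-r)^\tau$ together with $|z_j|\le\max\{|z|,|\z|\}$ for the $\nu$-factor. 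Summing $N$ identical bounds gives $N\Gamma_\tau\lesssim(1+\beta)\Gamma_\tau$. No dyadic decomposition and no identification of a ``top scale'' are needed.

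Your geometric-series route is more ambitious, but the claim $s_{\rm top}\asymp W$ is the weak point. Only $s_{\rm top}\le 2W$ holds in general: if $z$ and $\z$ are diametrically opposite and close to $\partial\D$ (say $z=-r$, $\z=r$ with $r\to1$), then $W=(1+r^2)^2/(1-r^2)\to\infty$ while $s_{\rm top}\le 1$. When the exponent $\frac1p-\frac{\tau+1}q$ is negative (e.g.\ $p=q$), the step $s_{\rm top}^{1/p-(\tau+1)/q}\lesssim W^{1/p-(\tau+1)/q}$ then fails and your final comparison collapses. Since the statement only asks for the bound with the $(1+\beta)$ factor, the cleanest repair is to abandon the geometric summation and bound each term uniformly as the paper does; the two $\tau$-restrictions then enter precisely as the monotonicity needed to push $s_j$ up to $2W$ in the $\om$-factor and $|z_j|$ down to an endpoint in the $\nu$-factor.
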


\begin{proof}
Lemma~\ref{diag} shows that (ii) implies (i). For the converse, assume (i), that is,
    \begin{equation}\label{6}
    |f(z)-f(\z)|\gamma(z)\le\|f\|_{\BO(\Delta)},\quad \b(z,\z)<r.
    \end{equation}
The estimate (ii) for $\beta(z,\z)\le r$ then follows from Lemma~\ref{diag}. If $\beta(z,\z)>r$, let $N=\max\{n\in\N:n\le\b(z,\z)/r+1\}$, and pick up $N+1$ points from the geodesic joining $z$ and $\z$ such that $\b(z_j,z_{j+1})=\b(z,\z)/N<r$ for all $j=0,\ldots,N-1$. Then, as the hyperbolic distance is additive along geodesics, \eqref{6} yields
    $$
    |f(z)-f(\z)|
    \le\sum_{j=0}^{N-1}|f(z_{j})-f(z_{j+1})|
    \le\|f\|_{\BO(\Delta)}\sum_{j=0}^{N-1}\frac{\widehat{\om}(z_j)}{\widehat{\nu}(z_j)}(1-|z_j|)^{\frac1p-\frac1q}.
    $$
Next, observe that
    \begin{equation}\label{eq:h}
    1-|z_j|\le\frac{2|1-\overline{z}\z|^2}{\max\{1-|z|^2,1-|\z|^2\}},\quad j=0,\ldots, N;
    \end{equation}
see the proof of \cite[Lemma~3.2]{PZZ} for details. This together with the inequality $\frac{1}{p}-\frac{1}{q}\ge0$ gives
    \begin{equation*}
    \begin{split}
    |f(z)-f(\z)|
    &\le\|f\|_{\BO(\Delta)}
    \left(\frac{2|1-\overline{z}\z|^2}{\max\{1-|z|^2,1-|\z|^2\}}\right)^{\frac{1}{p}-\frac{1}{q}}
    \sum_{j=0}^{N-1}\frac{\widehat{\om}(z_j)^{\frac{1}{p}}}{\widehat{\nu}(z_j)^{\frac1q}}\\
    &=\|f\|_{\BO(\Delta)}
    \left(\frac{2|1-\overline{z}\z|^2}{\max\{1-|z|^2,1-|\z|^2\}}\right)^{\frac{1}{p}-\frac{1}{q}}
    \sum_{j=0}^{N-1} \frac{\widehat{\om}(z_j)^{\frac{1}{p}}}{(1-|z_j|)^{\frac{\tau}{q}}}\frac{(1-|z_j|)^{\frac{\tau}{q}}}{\widehat{\nu}(z_j)^{\frac1q}}.
    \end{split}
    \end{equation*}
The election of $\tau$ together with Lemma~\ref{Lemma:weights-in-D} shows that the functions $\widehat{\om}(r)/(1-r)^{\frac{p\tau}{q}}$ and $\widehat{\nu}(r)/(1-r)^\tau$ are essentially decreasing on $[0,1)$. Therefore the inequalities \eqref{eq:h} and $|z_j|\le\max\{|z|,|\z|\}$ yield
    \begin{equation*}
    \begin{split}
    |f(z)-f(\z)|
    &\lesssim \|f\|_{\BO(\Delta)}\left(\frac{2|1-\overline{z}\z|^2}{\max\{1-|z|^2,1-|\z|^2\}}\right)^{\frac{1}{p}-\frac{\tau+1}{q}}\\
    &\quad\cdot\widehat{\om}\left(1-\frac{2|1-\overline{z}\z|^2}{\max\{1-|z|^2,1-|\z|^2\}}\right)^{\frac{1}{p}}
    \sum_{j=0}^{N-1}\frac{(1-|z_j|)^{\frac{\tau}{q}}}{\widehat{\nu}(z_j)^{\frac{1}{q}}}\\
    &\lesssim\|f\|_{\BO(\Delta)} \Gamma_\tau(z,\z)N
    \lesssim\|f\|_{\BO(\Delta)}(1+\beta(z,\z))\Gamma_\tau(z,\z),\quad \beta(z,\z)>r.
    \end{split}
    \end{equation*}
Therefore (ii) is satisfied.
\end{proof}

For $0<p,q<\infty$, $0<r<\infty$ and radial weights $\om,\nu$, the space $\BA(\Delta)=\BA(\Delta)_{\om,\nu,p,q,r}$ consists of $f\in L^q_{\nu,{\rm loc}}$ such that
    $$
    \|f\|_{\BA(\Delta)}
    =\sup_{z\in\D}\left(\left(\frac{1}{\nu(\Delta(z,r))}\int_{\Delta(z,r)}|f(\z)|^q\nu(\z)\,dA(\z)\right)^{\frac1q}\gamma(z)\right)<\infty.
    $$
For $c,\sigma\in\mathbb{R}$ and a radial weight $\nu$, the general Berezin transform of $\varphi\in L^1_{\nu(1-|\cdot|)^\s}$ is defined by
    $$
    B(\vp)(z)=B_{\nu,c,\sigma}(\varphi)(z)
    =\frac{(1-|z|^2)^{c+1}}{\widehat{\nu}(z)} \int_\D \varphi(\z) \frac{(1-|\z|^2)^\sigma}{|1-z\overline{\z}|^{2+c+\sigma}}\nu(\z)\,dA(\z),\quad z\in\D.
    $$
The next lemma shows, in particular, that the space $\BA(\Delta)=\BA(\Delta)_{\om,\nu,p,q,r}$ is independent of $r$ as long as $r$ is sufficiently large depending on $\nu\in\DDD$.

\begin{lemma}\label{ba}
Let $0<p\le q<\infty$, $0<r<\infty$ and $\om,\nu\in\DDD$, $\g(z)=\g_{\om,\nu,p,q}(z)=\frac{\widehat{\nu}(z)^\frac1q(1-|z|)^{\frac1q}}{\widehat{\om}(z)^\frac1p(1-|z|)^{\frac1p}}$. If $f\in L^q_\nu$, then the following statements are equivalent:
    \begin{itemize}
    \item[\rm (i)] There exists $r_0=r_0(\nu)>0$ such that $f\in\BA(\Delta)=\BA(\Delta)_{\om,\nu,p,q,r}$ for all $r\ge r_0$;
    \item[\rm (ii)] $|f|^q\nu dA$ is a $q$-Carleson measure for $A^p_\om$;
    \item[\rm (iii)] The identity operator $Id:A^p_\om\to L^q_{|f|^q\nu}$ is bounded;
    \item[\rm (iv)] The multiplication operator $M_f(g)=fg$ is bounded from $A^p_\om$ to $L^q_\nu$;
    \item[\rm (v)] $\sup_{z\in\D}\gamma(z)^{q}B(|f|^q)(z)<\infty$ for all $\sigma>1-\frac{q}{p}(1+\a)$ and $c>\max\{-1-\sigma, \frac{q}{p}(1+\b)-2\}$, where $\alpha=\alpha(\om)>0$ and $\beta=\beta(\om)>0$ are those of Lemmas~\ref{Lemma:weights-in-D-hat}(ii) and~\ref{Lemma:weights-in-D}.
    \end{itemize}
\end{lemma}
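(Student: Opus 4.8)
The plan is to treat the three operator-theoretic conditions (ii)--(iv) as a tautological block, connect that block to the geometric averaging condition (i) via the known Carleson measure description for $A^p_\om$, and finally link everything to the Berezin condition (v) by a one-block lower estimate together with a dyadic summation. First I would dispose of Step 1, namely (ii)$\Leftrightarrow$(iii)$\Leftrightarrow$(iv). For every $g\in A^p_\om$ one has
    $$
    \|M_f g\|_{L^q_\nu}^q=\int_\D|g(\z)|^q|f(\z)|^q\nu(\z)\,dA(\z)=\|g\|_{L^q_{|f|^q\nu}}^q,
    $$
so the boundedness of $M_f\colon A^p_\om\to L^q_\nu$, the boundedness of $Id\colon A^p_\om\to L^q_{|f|^q\nu}$, and the assertion that $|f|^q\nu\,dA$ is a $q$-Carleson measure for $A^p_\om$ all encode the single inequality $\|g\|_{L^q_{|f|^q\nu}}\lesssim\|g\|_{A^p_\om}$.

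For Step 2, (i)$\Leftrightarrow$(ii), I would invoke the characterization of $q$-Carleson measures for $A^p_\om$ with $\om\in\DD$ and $p\le q$ from \cite{PR2015/2}, which for our measure reads
    $$
    \sup_{a\in\D}\frac{\int_{S(a)}|f(\z)|^q\nu(\z)\,dA(\z)}{\om(S(a))^{q/p}}<\infty.
    $$
Since $\nu\in\DDD$, for $r\ge r_0(\nu)$ we have $\nu(\Delta(z,r))\asymp\nu(S(z))\asymp\widehat{\nu}(z)(1-|z|)$, and using $\om(S(z))\asymp\widehat{\om}(z)(1-|z|)$ a direct computation gives
    $$
    \left(\frac{1}{\nu(\Delta(z,r))}\int_{\Delta(z,r)}|f|^q\nu\,dA\right)\gamma(z)^q\asymp\frac{\int_{\Delta(z,r)}|f|^q\nu\,dA}{\om(S(z))^{q/p}}.
    $$
Thus (i) is exactly the above supremum taken over hyperbolic discs, and the passage to the Carleson-square form is a standard covering argument: each $S(a)$ is covered by a bounded number of discs $\Delta(z,r)$ for $r$ large, each such disc lies in a Carleson square of comparable $\om$-mass, and $\om(S(\cdot))$ together with $\nu(\Delta(\cdot,r))$ vary boundedly under bounded hyperbolic displacement by Lemma~\ref{Lemma:weights-in-D-hat}(ii).

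Step 3 establishes (ii)$\Leftrightarrow$(v). The easy half is (v)$\Rightarrow$(i): fixing any admissible pair $(\sigma,c)$ and restricting the defining integral of $B(|f|^q)(z)$ to $\Delta(z,r)$, where $|1-z\overline{\z}|\asymp1-|z|\asymp1-|\z|$, yields
    $$
    B(|f|^q)(z)\gtrsim\frac{1}{\widehat{\nu}(z)(1-|z|)}\int_{\Delta(z,r)}|f|^q\nu\,dA\asymp\frac{1}{\nu(\Delta(z,r))}\int_{\Delta(z,r)}|f|^q\nu\,dA,
    $$
so that $\sup_{z}\gamma(z)^qB(|f|^q)(z)$ dominates $\|f\|_{\BA(\Delta)}^q$ and (i) follows. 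For the converse (ii)$\Rightarrow$(v), assuming the Carleson bound $\int_{S(a)}|f|^q\nu\,dA\lesssim\om(S(a))^{q/p}$, I would split $\D$ into the dyadic annuli on which $1-|\z|\asymp2^{k}(1-|z|)$ together with their angular Carleson subdivisions, estimate $|1-z\overline{\z}|$ and $1-|\z|^2$ on each block, and sum the resulting series. The ranges $\sigma>1-\frac{q}{p}(1+\alpha)$ and $c>\max\{-1-\sigma,\frac{q}{p}(1+\beta)-2\}$ are calibrated precisely so that both one-sided sums converge: the lower bound on $\sigma$, routed through $\alpha=\alpha(\om)$ of Lemma~\ref{Lemma:weights-in-D}, tames the contribution of $\z$ away from the boundary, while the bound on $c$, routed through $\beta=\beta(\om)$ of Lemma~\ref{Lemma:weights-in-D-hat}(ii), tames the tail near $z$; the comparisons of $\widehat{\om}(S(a_k))^{q/p}$ and $\widehat{\nu}(z)$ across scales are made with the two-sided doubling estimates of Lemmas~\ref{Lemma:weights-in-D-hat} and~\ref{Lemma:weights-in-D}.

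The genuinely technical step is this last implication (ii)$\Rightarrow$(v): organizing the annular decomposition of the Berezin integral and verifying that the stated ranges for $\sigma$ and $c$ produce convergent geometric-type sums. The difficulty is that for general $\om,\nu\in\DDD$ there are no closed forms for $\widehat{\om}$ and $\widehat{\nu}$, so every cross-scale comparison must be funnelled through the polynomial upper and lower bounds of Lemmas~\ref{Lemma:weights-in-D-hat}(ii) and~\ref{Lemma:weights-in-D}, and one must check that the two endpoint conditions on $(\sigma,c)$ are exactly the thresholds separating convergence from divergence. The remaining implications are either tautological (Step 1), reduce to a routine Carleson-square/hyperbolic-disc covering (Step 2), or amount to the single-block lower estimate above (the easy half of Step 3).
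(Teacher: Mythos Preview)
Your proposal is correct and the overall architecture matches the paper's: the block (ii)$\Leftrightarrow$(iii)$\Leftrightarrow$(iv) is tautological, (v)$\Rightarrow$(i) is the single-block lower estimate, and the only substantive work is linking (i)$\Leftrightarrow$(ii) and proving (i)$\Rightarrow$(v).

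There are two methodological differences worth noting. For (ii)$\Rightarrow$(i), the paper does not pass through the Carleson-square characterization and a covering argument; instead it tests the embedding (ii) directly on the explicit family $g_z(\z)=\left(\frac{1-|z|}{1-\overline{z}\z}\right)^{(\lambda+1)/p}$ with $\lambda=\lambda(\om)$ from Lemma~\ref{Lemma:weights-in-D-hat}(iv), which immediately yields the $\BA(\Delta)$ bound after invoking $\nu(\Delta(z,r))\asymp\widehat{\nu}(z)(1-|z|)$. Your route via the Carleson-square supremum and a square/disc covering is perfectly valid but slightly less direct. For the hard implication (i)$\Rightarrow$(v), the paper does not use a dyadic annular subdivision; it takes an $r$-lattice $\{a_j\}$ as in \cite[Lemma~4.7]{Zhu}, applies (i) on each $\Delta(a_j,r)$, and then converts the resulting discrete sum back into the single integral
\[
\int_\D\frac{(1-|u|^2)^{\sigma+\frac{q}{p}-2}\,\widehat{\om}(u)^{q/p}}{|1-z\overline{u}|^{2+c+\sigma}}\,dA(u),
\]
which is evaluated by splitting at $|u|=|z|$ and invoking Lemmas~\ref{Lemma:weights-in-D-hat}(ii) and~\ref{Lemma:weights-in-D}. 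This lattice-to-integral trick and your dyadic annulus sum are equivalent in spirit; the paper's version has the advantage that the convergence thresholds on $\sigma$ and $c$ appear transparently as the integrability conditions for the two half-integrals, whereas in your approach they emerge as convergence conditions on two geometric series.
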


\begin{proof}
It is obvious that (ii), (iii) and (iv) are equivalent by the definitions. Assume (ii) is satisfied, that is,
    \begin{equation}\label{7}
    \left(\int_\D|g(\z)|^q|f(\z)|^q\nu(\z)\,dA(\z)\right)^\frac1q\lesssim\|g\|_{A^p_\om},\quad g\in A^p_\om.
    \end{equation}
For $z\in\D$, let $g_z(\z)=\left(\frac{1-|z|}{1-\overline{z}\z}\right)^{\frac{\lambda+1}{p}}$, where $\lambda=\lambda(\om)>0$ is that of Lemma~\ref{Lemma:weights-in-D-hat}(iv). Further, since $\nu\in\Dd$ by the hypothesis, there exists $r_\nu\in(0,\infty)$ such that $\nu(\Delta(z,r))>0$ for all $r\ge r_\nu$. For $g=g_z$ and $r\ge r_\nu$, \eqref{7} yields
    $$
    \left(\frac{1}{\nu(\Delta(z,r))}\int_{\Delta(z,r)}|f(\z)|^q\nu(\z)\,dA(\z)\right)^\frac1q
    \lesssim\frac{\|g_z\|_{A^p_\om}}{\nu(\Delta(z,r))^\frac1q}\lesssim\frac{\left(\widehat{\om}(z)(1-|z|)\right)^\frac1p}{\nu(\Delta(z,r))^\frac1q},\quad z\in\D.
    $$
But since $\nu\in\DDD$, applications of Lemmas~\ref{Lemma:weights-in-D-hat}(ii) and~\ref{Lemma:weights-in-D} show that
    \begin{equation}\label{8}
    \nu(\Delta(z,r))\asymp\widehat{\nu}(z)(1-|z|),\quad z\in\D,
    \end{equation}
if $r$ is sufficiently large. It follows that $f\in\BA(\Delta)=\BA(\Delta)_{\om,\nu,p,q,r}$ for all such $r$, and thus (i) is satisfied.

Conversely, if (i) is satisfied, then by using \eqref{8} we deduce
    $$
    \left(\int_{\Delta(z,r)}|f(\z)|^q\nu(\z)\,dA(\z)\right)^\frac1q\lesssim\widehat{\om}(z)^\frac1p(1-|z|)^\frac1p,\quad z\in\D.
    $$
Therefore $|f|^q\nu dA$ is a $q$-Carleson measure for $A^p_\om$ by \cite[Theorem~3]{PRS18}.

By integrating only over $\Delta(z,r)$ in (v) and using \eqref{8} we obtain (i) from (v). To complete the proof of the lemma, it remains to show the converse implication. To do this, pick up a sequence $\{a_j\}$ and $0<r<\infty$ in accordance with \cite[Lemma~4.7]{Zhu}, and observe that $\widehat{\om}$ is essentially constant in each hyperbolically bounded region by Lemma~\ref{Lemma:weights-in-D-hat}(ii). Then  by using \eqref{8}, the hypothesis (i), the election of $c$ and $\sigma$, and
finally Lemmas~\ref{Lemma:weights-in-D-hat}(ii) and \ref{Lemma:weights-in-D}, we deduce
    \begin{equation*}
    \begin{split}
    \frac{\widehat{\nu}(z)B(|f|^q)(z)}{(1-|z|^2)^{c+1}}
    &\lesssim\sum_{j=1}^\infty\int_{\Delta(a_j,r)}|f(\z)|^q\frac{(1-|\z|^2)^\sigma}{|1-z\overline{\z}|^{2+c+\sigma}}\nu(\z)\,dA(\z)\\
    &\lesssim\sum_{j=1}^\infty \frac{(1-|a_j|^2)^\sigma}{|1-z\overline{a_j}|^{2+c+\sigma}}\int_{\Delta(a_j,r)}|f(\z)|^q\nu(\z)\,dA(\z)\\
    &\lesssim\sum_{j=1}^\infty\frac{(1-|a_j|^2)^{\sigma+1}\widehat{\nu}(a_j)}{|1-z\overline{a_j}|^{2+c+\sigma}\nu(\Delta(a_j,r))}
    \int_{\Delta(a_j,r)}|f(\z)|^q\nu(\z) dA(\z)\\
    &\lesssim \sum_{i=1}^\infty \frac{(1-|a_i|^2)^{\sigma+1}\widehat{\nu}(a_i)}{|1-z\overline{a_i}|^{2+c+\sigma}\gamma(a_i)^q}
    \asymp\sum_{j=1}^\infty \frac{(1-|a_j|^2)^{\sigma+\frac{q}{p}}\widehat{\om}(a_j)^{\frac{q}{p}}}{|1-z\overline{a_j}|^{2+c+\sigma}}\\
    &\lesssim\int_\D\frac{(1-|u|^2)^{\sigma+\frac{q}{p}-2}\widehat{\om}(u)^{\frac{q}{p}}}{|1-z\overline{u}|^{2+c+\sigma}}\,dA(u)\\
    &\lesssim\int_{0}^{|z|}\frac{\widehat{\om}(t)^{\frac{q}{p}}}{(1-t)^{c+3-\frac{q}{p}}}\,dt
    +\frac{1}{(1-|z|)^{c+\sigma+1}}\int_{|z|}^{1}(1-t)^{\sigma+\frac{q}{p}-2}\widehat{\om}(t)^{\frac{q}{p}}\,dt\\
    &\lesssim\frac{\widehat{\om}(|z|)^{\frac{q}{p}}}{(1-|z|)^{c+2-\frac{q}{p}}}
    \asymp\frac{\widehat{\nu}(z)}{(1-|z|^2)^{c+1}\gamma(z)^{q}},\quad z\in\D,
    \end{split}
    \end{equation*}
and thus (v) is satisfied.
\end{proof}

With these preparations we are ready to show that $\BMO(\Delta)=\BA(\Delta)+\BO(\Delta)$. This follows from the case (ii) of the next theorem.

\begin{theorem}\label{th:BMOdecom}
Let $1\le p\le q<\infty$, $\om,\nu\in\DDD$, $\g(z)=\g_{\om,\nu,p,q}(z)=\frac{\widehat{\nu}(z)^\frac1q(1-|z|)^{\frac1q}}{\widehat{\om}(z)^\frac1p(1-|z|)^{\frac1p}}$ and $f\in L^q_\nu$. Further, let $r\ge r_\nu$, $\sigma>0$ and
    \begin{equation*}
    \begin{split}
    c>2\frac{q}{p}\left(\b(\om)+1\right)+\sigma+\max\left\{2\b(\nu),\gamma(\nu)\right\},
    \end{split}
    \end{equation*}
where $\b(\om),\b(\nu),\g(\nu)>0$ are associated to $\nu$ and $\om$ via Lemma~\ref{Lemma:weights-in-D-hat}(ii)(iii). Then the following statements are equivalent:
    \begin{itemize}
    \item[\rm (i)] There exists $r_0=r_0(\nu)\ge r_\nu$ such that $f\in\BMO(\Delta)=\BMO(\Delta)_{\om,\nu,p,q,r}$ for all $r\ge r_0$;
    \item[\rm (ii)] $f=f_1+f_2$, where $f_1\in\BA(\Delta)$ and $f_2=\fgrnu \in\BO(\Delta)$;
    \item[\rm (iii)] $\displaystyle \sup_{z\in\D}\left(B(|f-\widehat{f}_{r,\nu}(z)|^q)\gamma(z)^q\right)<\infty$;
    \item[\rm (iv)] For each $z\in\D$ there exists $\lambda_z\in\C$ such that $\displaystyle \sup_{z\in\D}\left(B(|f-\lambda_z|^q)\gamma(z)^q\right)<\infty$.
    \end{itemize}
    \end{theorem}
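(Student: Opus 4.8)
The plan is to prove the implications along the cycle (ii) $\Rightarrow$ (i) $\Rightarrow$ (ii) and (ii) $\Rightarrow$ (iii) $\Rightarrow$ (iv) $\Rightarrow$ (i), so that all four statements become equivalent. Two of these are immediate. For (iii) $\Rightarrow$ (iv) one simply takes $\lambda_z=\widehat{f}_{r,\nu}(z)$. For (ii) $\Rightarrow$ (i) I would use the subadditivity of $\MO_{\nu,q,r}$ together with Lemma~\ref{le:BMOr}: writing $f=f_1+f_2$, the contribution of the $\BA(\Delta)$-part is controlled by taking $\lambda=0$ in Lemma~\ref{le:BMOr}, which bounds $\MO_{\nu,q,r}(f_1)(z)$ by twice the local $L^q_\nu$-average of $f_1$, hence by $\|f_1\|_{\BA(\Delta)}\gamma(z)^{-1}$; the contribution of the continuous $\BO(\Delta)$-part satisfies $\MO_{\nu,q,r}(f_2)(z)\lesssim\Omega_r f_2(z)\le\|f_2\|_{\BO(\Delta)}\gamma(z)^{-1}$, applying Lemma~\ref{le:BMOr} with $\lambda=f_2(z)$ and using $|f_2(\z)-f_2(z)|\le\Omega_r f_2(z)$ on $\Delta(z,r)$. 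Summing gives $f\in\BMO(\Delta)$.

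For (i) $\Rightarrow$ (ii) I would set $f_2=\widehat{f}_{r,\nu}$ and $f_1=f-f_2$. The membership $f_1\in\BA(\Delta)$ reduces, via \eqref{8} and Lemma~\ref{le:BMOr}, to the $\BMO(\Delta)$-bound on the centered oscillation $\MO_{\nu,q,r}(f)(z)$, provided $\widehat{f}_{r,\nu}(\z)$ and $\widehat{f}_{r,\nu}(z)$ are comparable for $\z\in\Delta(z,r)$; this comparability is precisely $f_2\in\BO(\Delta)$. Hence the crux is to show $f_2=\widehat{f}_{r,\nu}\in\BO(\Delta)$. For $\beta(z,\z)$ small I would estimate $|\widehat{f}_{r,\nu}(z)-\widehat{f}_{r,\nu}(\z)|$ by inserting the constant $\widehat{f}_{r,\nu}(\z)$ into the average defining $\widehat{f}_{r,\nu}(z)$ and using the inclusion $\Delta(z,r)\subset\Delta(\z,2r)$ together with $\nu(\Delta(z,r))\asymp\nu(\Delta(\z,2r))$, which follows from $\nu\in\DDD$ and \eqref{8}. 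Lemma~\ref{le:BMOr} then bounds the result by a constant times $\MO_{\nu,q,2r}(f)(\z)\lesssim\|f\|_{\BMO(\Delta)}\gamma(\z)^{-1}\asymp\|f\|_{\BMO(\Delta)}\gamma(z)^{-1}$; here I use that (i) supplies the $\BMO(\Delta)$-bound uniformly for all radii $\ge r_0$, so replacing $r$ by $2r$ costs nothing.

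The characterization via the Berezin transform is handled at the two ends. For (iv) $\Rightarrow$ (i) I would restrict the defining integral of $B(|f-\lambda_z|^q)(z)$ to $\Delta(z,r)$: there the kernel satisfies $\frac{(1-|\z|^2)^\sigma}{|1-z\overline{\z}|^{2+c+\sigma}}\asymp(1-|z|)^{-2-c}$, so the prefactor $\frac{(1-|z|^2)^{c+1}}{\widehat{\nu}(z)}$ combines with \eqref{8} to produce $\nu(\Delta(z,r))^{-1}$, and we recover $\MO_{\nu,q,r}(f)(z)^q$ up to the reference value $\lambda_z$, which Lemma~\ref{le:BMOr} absorbs; thus $f\in\BMO(\Delta)$. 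For (ii) $\Rightarrow$ (iii) I would write $f-\widehat{f}_{r,\nu}(z)=f_1+(f_2-f_2(z))$ and estimate $|f-\widehat{f}_{r,\nu}(z)|^q\lesssim|f_1|^q+|f_2-f_2(z)|^q$. The term $B(|f_1|^q)(z)\lesssim\gamma(z)^{-q}$ is exactly the equivalence (i) $\Leftrightarrow$ (v) of Lemma~\ref{ba} applied to $f_1\in\BA(\Delta)$.

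The genuinely hard term is $B(|f_2-f_2(z)|^q)(z)$. Using $f_2\in\BO(\Delta)$ and Lemma~\ref{bo} I would bound $|f_2(\z)-f_2(z)|^q\lesssim\|f_2\|_{\BO(\Delta)}^q(1+\beta(z,\z))^q\Gamma_\tau(z,\z)^q$, which reduces the matter to showing
\[
\frac{(1-|z|^2)^{c+1}}{\widehat{\nu}(z)}\int_\D(1+\beta(z,\z))^q\Gamma_\tau(z,\z)^q\frac{(1-|\z|^2)^\sigma}{|1-z\overline{\z}|^{2+c+\sigma}}\nu(\z)\,dA(\z)\lesssim\gamma(z)^{-q}.
\]
Because $\Gamma_\tau$ is an intricate expression in $z$ and $\z$, I expect this to be the main obstacle. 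I would split $\D$ into the regions dictated by which of $1-|z|^2$, $1-|\z|^2$ realizes the maximum in the denominator of $\Gamma_\tau$ and by the size of $|1-z\overline{\z}|$ relative to $1-|z|$, insert in each region the $\DDD$-estimates for $\widehat{\om}$ and $\widehat{\nu}$ from Lemmas~\ref{Lemma:weights-in-D-hat} and~\ref{Lemma:weights-in-D}, and then verify convergence of the resulting integrals. The logarithmic factor $(1+\beta(z,\z))^q$ is harmless against the polynomial decay, but the region-by-region bookkeeping is exactly what forces the large lower bound on $c$ in the hypothesis; carrying this out is the principal difficulty of the theorem.
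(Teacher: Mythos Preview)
Your proposal is correct and follows essentially the same route as the paper: the cycle (iii) $\Rightarrow$ (iv) $\Rightarrow$ (i) $\Rightarrow$ (ii) $\Rightarrow$ (iii), with the last implication reduced via Lemma~\ref{bo} to the integral estimate \eqref{eq:f1}, which is then handled by splitting $\D$ into regions according to the relative sizes of $1-|z|$, $1-|\z|$ and $|1-\overline{z}\z|$. Your treatment of (ii) $\Rightarrow$ (iii) is slightly more direct than the paper's---you exploit $f_2(z)=\widehat{f}_{r,\nu}(z)$ to write $f-\widehat{f}_{r,\nu}(z)=f_1+(f_2-f_2(z))$ at once, whereas the paper decomposes $\widehat{f}_{r,\nu}=\widehat{f_1}_{r,\nu}+\widehat{f_2}_{r,\nu}$ and handles the two pieces separately (requiring the auxiliary bound $B(1)\lesssim1$ to absorb $|\widehat{f_1}_{r,\nu}(z)|^q$)---but the hard analytic core, namely the region-by-region estimate of the $\Gamma_\tau$-integral that forces the lower bound on $c$, is identical.
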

\begin{proof}
Obviously, (iii) implies (iv). Next assume (iv). The relation \eqref{8} shows that there exists $r_0=r_0(\nu)>0$ such that
    \begin{equation*}
    \begin{split}
    &\frac{1}{\nu\left(\Delta(z,r)\right)}\int_{\Delta(z,r)}|f(\z)-\lambda_z|^q\nu(\z)\,dA(\z)\\
    &\lesssim\frac{(1-|z|)^{c+1}}{\widehat{\nu}(z)}\int_\D|f(\z)-\lambda_z|^q\frac{(1-|\z|^2)^\sigma}{|1-z\overline{\z}|^{2+c+\sigma}}\nu(\z)\,dA(\z),
    \quad z\in\D,\quad r_0\le r<\infty,
    \end{split}
    \end{equation*}
which together with Lemma~\ref{le:BMOr} shows that (i) is satisfied.

Assume now (i), and let $f_2=\fgrnu$. Since $f\in L^q_\nu$, $q\ge 1$ and $r\ge r_\nu$, the function $f_2$ is well defined and continuous.
Since $\om,\nu\in\DDD$ by the hypothesis, one may use Lemmas~\ref{Lemma:weights-in-D-hat}(ii) and \ref{Lemma:weights-in-D} together with the argument in \cite[1651-1652]{PZZ} with minor modifications to show that $f_2=\fgrnu\in\BO(\Delta)$ and $f_1=f-\fgrnu\in\BA(\Delta)$. Thus (ii) is satisfied.

To complete the proof it suffices to show that (ii) implies (iii), so assume $f=f_1+f_2$, where $f_1\in\BA(\Delta)$ and $f_2=\fgrnu \in\BO(\Delta)$. Since $\widehat{f}_{r,\nu}=\widehat{f_1}_{r,\nu}+\widehat{f_2}_{r,\nu}$, it suffices to prove the condition in (iii) for $f_1$ and $f_2$ separately. First observe that by Lemma~\ref{Lemma:weights-in-D-hat}(iii) the constant function $1$ satisfies
    \begin{equation*}
    \begin{split}
    B(1)(z)\lesssim\frac{(1-|z|)^{c+1}}{\widehat{\nu}(z)}
    \left(\int_0^{|z|}\frac{\nu(t)}{(1-t)^{1+c}}\,dt + \frac{1}{(1-|z|)^{1+c+\sigma}}\int_{|z|}^1 (1-t)^\sigma \nu(t)\,dt\right)\lesssim 1,\quad z\in\D,
    \end{split}
    \end{equation*}
because $c>\max\{\gamma(\nu),\s\}-1$ by the hypothesis. This together with H\"older's inequality and Lemma~\ref{ba} yields
    \begin{equation*}
    \begin{split}
    B\left(\left|f_1-\widehat{f_1}_{r,\nu}(z)\right|^q\right)\gamma(z)^q
    &\lesssim\left(B(|f_1|^q)(z)+|\widehat{f_1}_{r,\nu}(z)|^q\right)\gamma(z)^q\\
    &\le\left(B(|f_1|^q)(z)+\widehat{|{f_1}|^q}_{r,\nu}(z)\right)\gamma(z)^q\lesssim1,\quad z\in\D,
    \end{split}
    \end{equation*}
and thus (iii) for $f_1\in\BA(\Delta)$ is satisfied.

To deal with $f_2\in\BO(\Delta)$, pick up $\tau$ satisfying the hypothesis of Lemma~\ref{bo}. Then
    \begin{equation*}
    \begin{split}
    |f_2(\z)-\widehat{f_2}_{r,\nu}(z)|
    &=\left|\frac{1}{\nu(\Delta(z,r)}\int_{\Delta(z,r)}(f_2(\z)-f_2(u))\nu(u)\,dA(u) \right|\\
    &\le\frac{1}{\nu(\Delta(z,r)}\int_{\Delta(z,r)} |f_2(\z)-f_2(u)|\nu(u)\,dA(u)\\
    &\lesssim\frac{1}{\nu(\Delta(z,r)}\int_{\Delta(z,r)}(1+\beta(\z,u))\Gamma_\tau(\z,u)\nu(u)\,dA(u)\\
    &\lesssim(1+\beta(z,\z))\Gamma_\tau(z,\z),\quad z,\z\in\D,
    \end{split}
    \end{equation*}
because $\Gamma_\tau(\z,u)\asymp\Gamma_\tau(z,\z)$ for all $u\in\Delta(z,r)$ by Lemma~\ref{Lemma:weights-in-D-hat}(ii); see the proof of Lemma~\ref{diag} for similar estimates. Hence it suffices to show that
    \begin{equation}\label{eq:f1}
    \frac{(1-|z|)^{c+1}\gamma(z)^q}{\widehat{\nu}(z)}
    \int_\D|(1+\beta(z,\z))\Gamma_\tau(z,\z)|^q\frac{(1-|\z|^2)^\sigma}{|1-z\overline{\z}|^{2+c+\sigma}}\nu(\z)\,dA(\z)\lesssim 1,\quad z\in\D,
    \end{equation}
to obtain (iii) for $f_2\in\BO(\Delta)$. The proof of \eqref{eq:f1} is involved and will be divided into four separate cases. Before dealing with each case, we observe that since $\beta(z,\z)$ grows logarithmically, we may pick up $0<\delta<\min\left\{\sigma,\frac{q}{p}\b(\om)+\b(\nu)+\frac{\s}{2}\right\}$ and a constant $C=C(\delta)>0$ such that
    \begin{equation}\label{eq:delta}
    1+\beta(z,\z)
    \le C\left|(1-|\varphi_z(\z)|\right)^{-\frac{\delta}{q}}
    =C\left(\frac{|1-\overline{z}\z|^{2}}{(1-|z|)(1-|\z|)}\right)^\frac{\delta}{q},\quad z,\z\in\D.
    \end{equation}

\smallskip

\noindent{\bf{Case $\mathbf{1}$}}. If
    $$
    \z\in D_1(z)=\left\{ w\in\D: 1-\frac{2|1-z\overline{w}|^2}{1-|z|^2}\le0\right\},
    $$
then $1-|z|\lesssim|1-z\overline{\z}|^2$ and
    \begin{equation*}
    \begin{split}
    \Gamma_\tau(z,\z)^q
    &\le\frac{\left(\frac{|1-\overline{z}\z|^2}{\max\{1-|z|^2,1-|\z|^2\}}\right)^{\frac{q}{p}-\tau-1}}
    {\min\left\{ \frac{\widehat{\nu}(z)}{(1-|z|)^\tau},
    \frac{\widehat{\nu}(\z)}{(1-|\z|)^\tau}\right\}}\widehat{\om}(0)^\frac{q}{p}
    \lesssim\left(\frac{|1-z\overline{\z}|^2}{1-|z|^2}\right)^{\frac{q}{p}-\tau-1}\frac{(1-|z|)^\tau}{\widehat{\nu}(z)}\chi_{D(0,|z|)}(\z)\\
    &\quad+\left(\frac{|1-z\overline{\z}|^2}{1-|z|^2}\right)^{\frac{q}{p}-\tau-1}\frac{(1-|\z|)^\tau}{\widehat{\nu}(\z)}\chi_{\D\setminus D(0,|z|)}(\z),\quad z\in\D,\quad \z\in D_1(z),
    \end{split}
    \end{equation*}
because of how $\tau$ is chosen in Lemma~\ref{bo}. Therefore \eqref{eq:delta} together with Lemmas~\ref{Lemma:weights-in-D-hat}(ii) and \ref{lemma:d-hat-new}(ii) yields
    \begin{equation*}
    \begin{split}
    &\frac{(1-|z|)^{c+1}\gamma(z)^q}{\widehat{\nu}(z)}\int_{D_1(z)} |(1+\beta(z,\z))\Gamma_\tau(z,\z)|^q \frac{(1-|\z|^2)^\sigma}{|1-z\overline{\z}|^{2+c+\sigma}}\nu(\z)\,dA(\z)\\
    &\lesssim\frac{(1-|z|)^{c+2+2\tau-\delta-\frac{q}{p}}\gamma(z)^q}{\widehat{\nu}(z)^{2}}
    \int_{D_1(z)\cap D(0,|z|)}\frac{(1-|\z|^2)^{\sigma-\delta}}{|1-z\overline{\z}|^{4+c+\sigma-2\left(\frac{q}{p}+\d-\tau\right)}}\nu(\z)\,dA(\z)\\
    &\quad+\frac{(1-|z|)^{c+2+\tau-\delta-\frac{q}{p}}\gamma(z)^q}{\widehat{\nu}(z)}
    \int_{D_1(z)\setminus D(0,|z|)} \frac{(1-|\z|^2)^{\sigma-\delta+\tau}}{\widehat{\nu}(\z)|1-z\overline{\z}|^{4+c+\sigma-2\left(\frac{q}{p}+\d-\tau\right)}}\nu(\z)\,dA(\z)\\
    &\lesssim\frac{(1-|z|)^{\frac{c}{2}+\tau-\frac{\sigma}{2}}\gamma(z)^q}{\widehat{\nu}(z)^{2}}
    \int_{0}^{|z|}(1-s)^{\sigma-\delta}\nu(s)\,ds\\
    &\quad+\frac{(1-|z|)^{\frac{c}{2}-\frac{\sigma}{2}}\gamma(z)^q}{\widehat{\nu}(z)}
    \int_{|z|}^1 (1-s)^{\sigma-\delta+\tau}\frac{\nu(s)}{\widehat{\nu}(s)}\,ds\\
    &\lesssim\frac{(1-|z|)^{\frac{c}{2}+\tau-\frac{\sigma}{2}+1-\frac{q}{p}}}{\widehat{\nu}(z)\widehat{\om}(z)^{\frac{q}{p}}}
    +\frac{(1-|z|)^{\frac{c}{2}+\frac{\sigma}{2}+1+\tau-\delta-\frac{q}{p}}}{\widehat{\om}(z)^{\frac{q}{p}}}\\
    &\lesssim(1-|z|)^{\frac{c}{2}+\tau-\frac{\sigma}{2}+1-\frac{q}{p}-\b(\nu)-\frac{q}{p}\b(\om)}
    \lesssim 1,\quad z\in\D,
    \end{split}
    \end{equation*}
where the last estimate is an immediate consequence of the choices of $c$ and $\delta$.

\smallskip

\noindent{\bf{Case $\mathbf{2}$}}. If
    $$
    \z\in D_2(z)=\left\{w\in\D: 1-\frac{2|1-z\overline{w}|^2}{1-|z|^2}\ge|z|\ge|w|\right\},
    $$
then $|1-z\overline{\z}|\asymp1-|z|^2\le1-|\z|^2$, which together the fact that $\frac{\widehat{\nu}(t)}{(1-t)^\tau}$ and $\frac{\widehat{\om}(r)}{(1-r)^{\tau\frac{p}{q}}}$ are essentially decreasing on $[0,1)$ gives
    \begin{equation*}
    \begin{split}
    \Gamma_\tau(z,\z)^q
    \lesssim\gamma(z)^{-q},\quad z\in\D,\quad \z\in D_2(z).
    \end{split}
    \end{equation*}
Therefore \eqref{eq:delta} and Lemma~\ref{Lemma:weights-in-D-hat}(iii) yield
    \begin{equation*}
    \begin{split}
    &\frac{(1-|z|)^{c+1}\gamma(z)^q}{\widehat{\nu}(z)}\int_{D_2(z)}|(1+\beta(z,\z))\Gamma_\tau(z,\z)|^q \frac{(1-|\z|^2)^\sigma}{|1-z\overline{\z}|^{2+c+\sigma}}\nu(\z)\,dA(\z)\\
    &\lesssim\frac{(1-|z|)^{c+1-\d}}{\widehat{\nu}(z)}
    \int_{D_2(z)}\frac{(1-|\z|^2)^{\sigma-\d}}{|1-z\overline{\z}|^{2+c+\sigma-2\d}}\nu(\z)\,dA(\z)\\
    &\lesssim\frac{(1-|z|)^{c+1-\d}}{\widehat{\nu}(z)}
    \int_0^{|z|}\frac{\nu(r)}{(1-r)^{c+1-\d}}\,dr
    \lesssim1,\quad z\in\D.
    \end{split}
    \end{equation*}

\smallskip

\noindent{\bf{Case $\mathbf{3}$}}. If
    $$
    \z\in D_3(z)=\left\{w\in\D:\min\left\{1-\frac{2|1-z\overline{w}|^2}{1-|z|^2},|w|\right\}\ge|z|\right\},
    $$
then $|1-z\overline{\z}|\asymp1-|z|^2\ge1-|\z|^2$, which together the fact that $\frac{\widehat{\nu}(t)}{(1-t)^\tau}$ and $\frac{\widehat{\om}(r)}{(1-r)^{\tau\frac{p}{q}}}$ are essentially decreasing on $[0,1)$ implies
    \begin{equation*}
    \begin{split}
    \Gamma_\tau(z,\z)^q\lesssim\frac{\widehat{\om}(z)^{\frac{q}{p}}(1-|z|)^{\frac{q}{p}-1}}{\widehat{\nu}(\z)},\quad z\in\D,\quad \z\in D_3(z).
    \end{split}
    \end{equation*}
Therefore \eqref{eq:delta} and Lemma~\ref{lemma:d-hat-new}(ii) implies
    \begin{equation*}
    \begin{split}
    &\frac{(1-|z|)^{c+1}\gamma(z)^q}{\widehat{\nu}(z)}\int_{D_3(z)}|(1+\beta(z,\z))\Gamma_\tau(z,\z)|^q \frac{(1-|\z|^2)^\sigma}{|1-z\overline{\z}|^{2+c+\sigma}}\nu(\z)\,dA(\z)\\
    &\lesssim(1-|z|)^{c+1-\d}\int_{D_3(z)}\frac{(1-|\z|^2)^{\sigma-\d}}{|1-z\overline{\z}|^{2+c+\sigma-2\delta}}
    \frac{\nu(\z)}{\widehat{\nu}(\z)} dA(\z)\\
    &\lesssim(1-|z|)^{\delta-\sigma}\int_{|z|}^1 \frac{(1-s)^{\sigma-\delta}\nu(s)}{\widehat{\nu}(s)}\,ds\lesssim 1,\quad z\in\D.
    \end{split}
    \end{equation*}

\smallskip

\noindent{\bf{Case $\mathbf{4}$}}.
If
    $$
    \z\in D_4(z)=\left\{w\in\D:1-\frac{2|1-z\overline{w}|^2}{1-|z|^2}<|z|\right\},
    $$
then Lemma~\ref{Lemma:weights-in-D-hat}(ii) gives
    $$
    \widehat{\om}\left(1-\frac{2|1-z\overline{\z}|^2}{1-|z|^2}\right)
    \lesssim\left(\frac{|1-z\overline{\z}|}{1-|z|}\right)^{2\beta(\om)}\widehat{\om}(z),\quad z\in\D,\quad \z\in D_4(z),
    $$
and hence
    \begin{equation*}
    \begin{split}
    \Gamma_\tau(z,\z)^q
    &\lesssim\left(\frac{|1-z\overline{\z}|}{1-|z|}\right)^{2\beta(\om)\frac{q}{p}}
    \widehat{\om}(z)^{\frac{q}{p}}
    \Bigg(\left(\frac{|1-z\overline{\z}|^2}{1-|\z|}\right)^{\frac{q}{p}-\tau-1}\frac{(1-|z|)^\tau}{\widehat{\nu}(z)}\chi_{D(0,|z|)}(\z)\\
    &\quad+\left(\frac{|1-z\overline{\z}|^2}{1-|z|}\right)^{\frac{q}{p}-\tau-1}\frac{(1-|\z|)^\tau}{\widehat{\nu}(\z)}\chi_{\D\setminus D(0,|z|)}(\z)
    \Bigg),\quad z\in\D,\quad \z\in D_4(z).
    \end{split}
    \end{equation*}
Hence \eqref{eq:delta} and Lemmas~\ref{Lemma:weights-in-D-hat}(iii) and \ref{lemma:d-hat-new}(ii) yield
    \begin{equation*}
    \begin{split}
    &\frac{(1-|z|)^{c+1}\gamma(z)^q}{\widehat{\nu}(z)}\int_{D_4(z)}|(1+\beta(z,\z))\Gamma_\tau(z,\z)|^q \frac{(1-|\z|^2)^\sigma}{|1-z\overline{\z}|^{2+c+\sigma}}\nu(\z)\,dA(\z)\\
    &\lesssim\frac{(1-|z|)^{c+2-\d-\frac{q}{p}-2\b(\om)\frac{q}{p}+\tau}}{\widehat{\nu}(z)}\int_{D_4(z)\cap D(0,|z|)}\frac{(1-|\z|^2)^{\sigma-\delta-\frac{q}{p}+\tau+1}}{|1-z\overline{\z}|^{4+c+\sigma-2\d-2\b(\om)\frac{q}{p}-2\frac{q}{p}+2\tau}}\nu(\z)\,dA(\z)\\
    &\quad+(1-|z|)^{c+2-\d-\frac{q}{p}-2\b(\om)\frac{q}{p}-\frac{q}{p}+\tau+1}
    \int_{D_4(z)\setminus D(0,|z|)}\frac{(1-|\z|)^{\s-\d+\tau}}{|1-z\overline{\z}|^{4+c+\s-2\d-2\b(\om)\frac{q}{p}-2\frac{q}{p}+2\tau}}\frac{\nu(\z)}{\widehat{\nu}(\z)}\,dA(\z)\\
    &\lesssim\frac{(1-|z|)^{c+2-\d-\frac{q}{p}-2\b(\om)\frac{q}{p}+\tau}}{\widehat{\nu}(z)}
    \int_0^{|z|}\frac{\nu(r)}{(1-r)^{2+c-\d-2\b(\om)\frac{q}{p}-\frac{q}{p}+\tau}}\,dr\\
    &\quad+\frac{1}{(1-|z|)^{\s-\d+\tau}}\int_{|z|}^1\frac{(1-r)^{\s-\d+\tau}\nu(r)}{\widehat{\nu}(r)}\,dr
    \lesssim1,\quad z\in\D.
    \end{split}
    \end{equation*}
Since $\D=\cup_{j=1}^4 D_j(z)$ for each $z\in\D$, by combining the four cases we obtain \eqref{eq:f1}. Thus (ii) implies (iii), and the proof is complete.
\end{proof}

\section{Boundedness of integral operators}
\label{Boundedness of integral operators}

In order to deal with the boundedness of Hankel operators, we need a technical result concerning certain integral operators. For $f\in L^1_{b}$ and $b,c\in\mathbb{R}$, define
    $$
    T_{b,c}(f)(z)=\int_\D f(\z)\frac{(1-|\z|^2)^b}{(1-z\overline{\z})^c}\,dA(\z),\quad z\in\D,
    $$
and
    $$
    S_{b,c}(f)(z)=\int_\D f(\z)\frac{(1-|\z|^2)^b}{|1-z\overline{\z}|^c}\,dA(\z),\quad z\in\D.
    $$
In the analytic case the operator $T_{b,c}$ can be interpreted as a fractional differentiation or integration depending on the parameters $b$ and $c$
\cite{ZZMF08}. The boundedness of these operator between $L^p$ spaces induced by standard weights has been characterized in \cite{ZhaoIEOT}.

Lemma~\ref{Lemma:weights-in-D-hat}(ii) shows that for $\eta\in\DD$ there exists a constant $c_0=c_0(\sigma)>1$ such that hypotheses (i) and (ii) of the next lemma are satisfied for all $c\ge c_0$.

\begin{lemma}\label{lemma:S}
Let $1<p\le q<\infty$, $b>-1$, $c>1$ and $\sigma,\eta\in\DDD$ such that
\begin{enumerate}
    \item[(i)] $\displaystyle\quad\int_r^1 \frac{(1-t)^{c-2}}{\widehat{\eta}(t)^{\frac1q}}\,dt\lesssim \frac{(1-r)^{c-1}}{\widehat{\eta}(r)^{\frac1q}},\quad 0\le r<1;$
    \item[(ii)] $\displaystyle\quad\int_0^r \frac{\eta(t)}{(1-t)^{\frac{cq}{p}-1}\widehat{\eta}(t)^{\frac1{p'}}}\,dt\lesssim\frac{\widehat{\eta}(r)^{\frac1p}}{(1-r)^{\frac{cq}{p}-1}},\quad 0\le r<1$.
\end{enumerate}
Then the following statements are equivalent:
\begin{enumerate}
\item $ S_{b,c}:A^p_\sigma\to L^q_\eta$ is bounded;
 \item $ T_{b,c}:A^p_\sigma\to L^q_\eta$ is bounded;
 \item $\displaystyle\sup_{0<r<1} (1-r)^{2+b-c+\frac{1}{q}-\frac{1}{p}}\frac{\widehat{\eta}(r)^{\frac1q}}{\widehat{\sigma}(r)^{\frac1p}}<\infty$.
\end{enumerate}
\end{lemma}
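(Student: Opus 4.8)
The plan is to establish the equivalences by splitting the substantive content into a testing argument (necessity of (3)) and a Schur-type estimate (sufficiency). The soft point I would exploit first is that the sufficiency argument, carried out for the majorant operator $\widetilde{S}f(z)=\int_\D|f(\z)|(1-|\z|^2)^b|1-z\overline{\z}|^{-c}\,dA(\z)$ on all of $L^p_\sigma$, automatically yields both (1) and (2): indeed $|S_{b,c}f(z)|$ and $|T_{b,c}f(z)|$ are both dominated by $\widetilde{S}f(z)$ pointwise, and $\||f|\|_{L^p_\sigma}=\|f\|_{A^p_\sigma}$. Thus I would reduce the proof to showing that (3) implies $\|\widetilde{S}f\|_{L^q_\eta}\lesssim\|f\|_{L^p_\sigma}$, and that each of (1), (2) implies (3). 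Throughout I will use that $\sigma,\eta\in\DDD$ give $\eta(\Delta(a,\rho))\asymp\widehat{\eta}(a)(1-|a|)$ (and similarly for $\sigma$) for large $\rho$, the standard sub-mean value property for $\DD$-weights, and the Forelli--Rudin estimate of Lemma~\ref{Lemma:weights-in-D-hat}(iv).

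For necessity I would test on the analytic functions $F_a(\z)=\bigl((1-|a|^2)/(1-\overline{a}\z)\bigr)^{\gamma}$, $a\in\D$, choosing $\gamma=\gamma(\sigma,b,c)$ large (in terms of the exponent $\lambda(\sigma)$ of Lemma~\ref{Lemma:weights-in-D-hat}(iv)) so that $\gamma p>\lambda(\sigma)+1$ and $\gamma+c-b-2>0$. Lemma~\ref{Lemma:weights-in-D-hat}(iv) gives $\|F_a\|_{A^p_\sigma}^p\asymp(1-|a|)\widehat{\sigma}(a)$, while a standard binomial expansion of $\int_\D(1-|\z|^2)^b(1-\overline{a}\z)^{-\gamma}(1-a\overline{\z})^{-c}\,dA(\z)\asymp(1-|a|)^{-(\gamma+c-b-2)}$ yields $T_{b,c}(F_a)(a)\asymp(1-|a|)^{b+2-c}$. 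Since $T_{b,c}(F_a)$ is analytic in $z$, the sub-mean value property over $\Delta(a,\rho)$ gives $(1-|a|)^{(b+2-c)q}\lesssim\|T_{b,c}F_a\|_{L^q_\eta}^q/(\widehat{\eta}(a)(1-|a|))$; combining with $\|T_{b,c}F_a\|_{L^q_\eta}\lesssim(\widehat{\sigma}(a)(1-|a|))^{1/p}$ from (2) and rearranging produces exactly (3). The implication (1)$\Rightarrow$(3) is handled by the same test function; since $S_{b,c}(F_a)$ is not analytic, I would instead bound $|S_{b,c}(F_a)(z)|\gtrsim(1-|a|)^{b+2-c}$ directly for $z\in\Delta(a,\rho)$ by restricting the integral to $\z\in\Delta(a,\rho')$, where the argument of $(1-\overline{a}\z)^{-\gamma}$ is small and (for $\gamma$ large) the complementary contribution is negligible, and then integrate this lower bound against $\eta$ over $\Delta(a,\rho)$.

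For sufficiency I would prove $\|\widetilde{S}f\|_{L^q_\eta}\lesssim\|f\|_{L^p_\sigma}$ by a Schur test adapted to the off-diagonal range $p\le q$. Writing $K(z,\z)=(1-|\z|^2)^b|1-z\overline{\z}|^{-c}$ and choosing a suitable radial pivot $h$ (a power of $\widehat{\eta}$ times a power of $1-|\cdot|$), Hölder with exponents $(p',p)$ applied to $K|f|=[K^{1/p'}h]\,[K^{1/p}|f|h^{-1}]$ gives a first requirement $\int_\D K(z,\z)h(\z)^{p'}\,dA(\z)\le C_1\Phi(z)$ defining an auxiliary $\Phi$, and reduces the estimate to controlling $\int_\D\Phi^{q/p'}\bigl(\int_\D K|f|^ph^{-p}\bigr)^{q/p}\eta\,dA$. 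Because $q/p\ge1$, Minkowski's integral inequality in $L^{q/p}$ moves the outer integration inside and leaves the second requirement $\int_\D K(z,\z)^{q/p}\Phi(z)^{q/p'}\eta(z)\,dA(z)\le C_2\,h(\z)^{q}\sigma(\z)^{q/p}$. The emergence of $K^{q/p}$, and hence of the exponent $cq/p$, is precisely what matches the normalisation in hypothesis (ii). Each of the two double integrals I would then reduce to a one-dimensional radial integral by integrating the angular variable first (using $c>1$ and $\int|1-z\overline{\z}|^{-c}\,d\theta\asymp(\max\{1-|z|,1-|\z|\})^{1-c}$), splitting into the ranges $0$ to $|z|$ and $|z|$ to $1$, and applying Lemmas~\ref{Lemma:weights-in-D-hat} and~\ref{Lemma:weights-in-D}; condition (3) is used here to trade $\widehat{\sigma}$ for $\widehat{\eta}$, after which the two surviving radial inequalities are exactly hypotheses (i) (on the tail) and (ii) (on the head).

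The hardest step will be the sufficiency bookkeeping: selecting the pivot $h$ and the companion function $\Phi$ so that, after the angular integration and the use of (3), the two Schur inequalities collapse onto (i) and (ii) with the correct powers. The off-diagonal feature $p<q$ is the source of the difficulty, since it forces Minkowski's inequality and therefore the kernel power $K^{q/p}$, which is exactly why the exponent in (ii) is $cq/p$ rather than $c$; controlling the resulting head and tail radial integrals purely by the doubling estimates of Lemmas~\ref{Lemma:weights-in-D-hat}--\ref{Lemma:weights-in-D} together with the boundary balance (3) is where essentially all of the computation resides.
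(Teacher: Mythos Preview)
Your plan is essentially the same as the paper's: test functions for necessity and a Schur--H\"older--Minkowski estimate for sufficiency, with hypotheses (i) and (ii) absorbing the tail and head radial integrals. Two points of comparison are worth noting. First, for necessity the paper argues $(1)\Rightarrow(2)\Rightarrow(3)$ only: the implication $(1)\Rightarrow(2)$ is trivial, and for $(2)\Rightarrow(3)$ it tests on $f_{\z,N}(z)=\sigma(S(\z))^{-1/p}z^N\bigl((1-|\z|^2)/(1-\overline{\z}z)\bigr)^{2+b+N}$ and computes $T_{b,c}(f_{\z,N})$ in closed form via the differentiated reproducing identity \eqref{eq:rpN}, then integrates its $L^q_\eta$ norm directly using Lemma~\ref{Lemma:weights-in-D-hat}(iv); this avoids both the separate treatment of $(1)\Rightarrow(3)$ and the sub-mean-value localisation you propose. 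Second, in the sufficiency step the paper's pivot is $h(\z)=\widehat{\sigma}(\z)^{1/(pp')}(1-|\z|^2)^{b/p+(1/p-1/q)/p'}$, built from $\widehat{\sigma}$ rather than $\widehat{\eta}$; with this choice the Schur bound terminates at $\|S_{b,c}f\|_{L^q_\eta}\lesssim\|f\|_{L^p_{\widetilde{\sigma}}}$ with $\widetilde{\sigma}=\widehat{\sigma}/(1-|\cdot|)$, and the last step $\|f\|_{A^p_{\widetilde{\sigma}}}\asymp\|f\|_{A^p_\sigma}$ is Lemma~\ref{LemmaDeMierda}, which genuinely uses that $f$ is analytic. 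So your assertion that the majorant bound holds on all of $L^p_\sigma$ is stronger than what the paper proves and, with the paper's pivot, would not follow; since the statement only concerns $A^p_\sigma$ this does not affect the result, but you should be aware that analyticity is used at the final step.
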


\begin{proof}
Obviously (1) implies (2). Assume now (2), and for each $\z\in\D$ and $N\in\N$, define $f_{\z,N}\in H^\infty$ by
    $
    f_{\z,N}(z)=\frac{z^N}{\sigma(S(\z))^{\frac{1}{p}}}\left(\frac{1-|\z|^2}{1-\overline{\z}z} \right)^{2+b+N}
    $
for all $z\in\D$. By differentiating the reproducing formula of $A^2_b$ we obtain
    \begin{equation}\label{eq:rpN}
    g^{(N)}(z)=M_1\int_\D\frac{\overline{u}^Ng(u)(1-|u|^2)^{b}}{(1-\overline{u}z)^{2+b+N}}\,dA(u),\quad z\in\D,\quad N\in\N,\quad g\in A^2_b,
    \end{equation}
where $M_1=M_1(N,b)>0$ is a constant. Therefore
    \begin{equation*}
    \begin{split}
    T_{b,c}(f_{\z,N})(z) &=\frac{(1-|\z|^2)^{2+b+N}}{\sigma(S(\z))^{\frac{1}{p}}}
    \int_{\D} \frac{u^N(1-|u|^2)^{b}}{(1-u\overline{\z})^{2+b+N}(1-\overline{u}z)^{c}}\,dA(u)\\
    &=\frac{(1-|\z|^2)^{2+b+N}}{\sigma(S(\z))^{\frac{1}{p}}}
    \overline{\int_{\D} \frac{\overline{u}^N(1-|u|^2)^{b}}{(1-\z\overline{u})^{2+b+N}(1-\overline{z}u)^{c}}\,dA(u)}\\
    &=M_2\frac{(1-|\z|^2)^{2+b+N}}{\sigma(S(\z))^{\frac{1}{p}}}\frac{z^N}{(1-z\overline{\z})^{c+N}},
    \end{split}
    \end{equation*}
where $M_2=M_2(b,c,N)>0$. Fix $N>\max\left\{ \frac{\lambda(\eta)+1}{q}-c,\frac{\lambda(\sigma)+1}{p}-b-2\right\}$. Then Lemma~\ref{Lemma:weights-in-D-hat}(iv) gives $\| f_{\z,N}\|_{L^p_\sigma}\asymp 1$ and
    $$
    \int_{\D} \frac{\eta(z)}{|1-\overline{\z}z|^{(c+N)q}}\,dA(z)\asymp \frac{\eta(S(\z))}{(1-|\z|)^{(c+N)q}},\quad \z\in\D.
    $$
Therefore (2) yields
    \begin{equation*}
    \begin{split}
    \infty
    &>\| f_{\z,N}\|^q_{L^p_\sigma}
    \gtrsim\| T_{b,c}(f_{\z,N})\|^q_{L^q_\eta}
    \asymp\left(\frac{(1-|\z|^2)^{2+b+N}}{\sigma(S(\z))^{\frac{1}{p}}}\right)^{q}\int_{\D} \frac{\eta(z)}{|1-\overline{\z}z|^{(c+N)q}}\,dA(z)\\
    &\asymp(1-|\z|^2)^{q(2+b-c)}\frac{\eta(S(\z))}{\sigma(S(\z))^{\frac{q}{p}}},\quad \z\in\D,
    \end{split}
    \end{equation*}
thus (3) holds.

Assume (3) holds and let $h(\zeta)=\widehat{\sigma}(\zeta)^{\frac{1}{pp'}}(1-|\zeta|^2)^{\frac{b}{p}+\left(\frac{1}{p}-\frac{1}{q}\right)\frac{1}{p'}}$ for all $\z\in\D$. Then H\"older's inequality yields
    \begin{align*}
    |S_{b,c}f(z)|
    &\le\left(\int_\D |f(\zeta)|^p h(\zeta)^p\frac{dA(\zeta)}{|1-z\overline{\zeta}|^c}\right)^{\frac{1}{p}}\left(\int_\D \left(\frac{(1-|\zeta|^2)^b}{h(\zeta)}\right)^{p'}\frac{dA(\zeta)}{|1-z\overline{\zeta}|^c}\right)^{\frac{1}{p'}}
    =I_1(z)^{\frac{1}{p}}\cdot I_2(z)^{\frac{1}{p'}},
    \end{align*}
where
    \begin{align*}
    I_2(z)
    &=\int_\D\frac{(1-|\zeta|^2)^{b-\frac{1}{p}+\frac{1}{q}}}{|1-z\overline{\zeta}|^c\widehat{\sigma}(\zeta)^{\frac{1}{p}}}\,dA(\zeta)
    \asymp \int_0^1 \frac{(1-r)^{b-\frac{1}{p}+\frac{1}{q}}}{\widehat{\sigma}(r)^{\frac{1}{p}}(1-r|z|)^{c-1}}\,dr\\
    &=\int_0^{|z|} \frac{(1-r)^{b-\frac{1}{p}+\frac{1}{q}}}{\widehat{\sigma}(r)^{\frac{1}{p}}(1-r|z|)^{c-1}}\,dr+\int_{|z|}^1 \frac{(1-r)^{b-\frac{1}{p}+\frac{1}{q}}}{\widehat{\sigma}(r)^{\frac{1}{p}}(1-r|z|)^{c-1}}\,dr
    =J^{|z|}+J_{|z|}.
    \end{align*}
Lemma~\ref{Lemma:weights-in-D} together with the assumption (3) yields
    $$
    J^{|z|}
    \le\int_0^{|z|}\frac{(1-r)^{b-\frac1p+\frac1q+1-c}}{\widehat{\sigma}(r)^\frac1p}\,dr
    \lesssim\int_0^{|z|}\frac{dr}{\widehat{\eta}(r)^{\frac{1}{q}}(1-r)}\lesssim \frac{1}{\widehat{\eta}(z)^{\frac{1}{q}}},\quad z\in\D,
    $$
since $\eta\in\DDD\subset\Dd$ by the hypothesis. In a similar fashion, (3) together with the hypothesis (i) gives
    $$
    J_{|z|}
    \le\frac1{(1-|z|)^{c-1}}\int_{|z|}^1\frac{(1-r)^{b-\frac1p+\frac1q}}{\widehat{\sigma}(r)^\frac1p}\,dr
    \lesssim\frac{1}{(1-|z|)^{c-1}}\int_{|z|}^1 \frac{(1-r)^{c-2}}{\widehat{\eta}(r)^{\frac{1}{q}}}\,dr\lesssim \frac{1}{\widehat{\eta}(z)^{\frac{1}{q}}},\quad z\in\D,
    $$
and hence $I_2(z)\lesssim\widehat{\eta}(z)^{-\frac1q}$ for all $z\in\D$. This estimate and Minkowski's integral inequality (Fubini's theorem in the case $q=p$) now yield
    \begin{align*}
    \|S_{b,c}(f)\|_{L^q_\eta}^p
    &\lesssim\left(\int_\D\left(\int_\D|f(\zeta)|^p h(\zeta)^p\frac{dA(\z)}{|1-z\overline{\zeta}|^c}\right)^{\frac{q}{p}}\frac{\eta(z)}{\widehat{\eta}(z)^{\frac{1}{p'}}}dA(z)\right)^{\frac{p}{q}}
    \le\int_\D |f(\zeta)|^p\widetilde{\sigma}(\zeta)I_3(\zeta)\,dA(\zeta),
    \end{align*}
where
    $$
    I_3(\zeta)
    =\frac{h(\zeta)^p}{\widetilde{\sigma}(\zeta)}
    \left(\int_\D\frac{\eta(z)dA(z)}{|1-z\overline{\zeta}|^{\frac{cq}{p}}\widehat{\eta}(z)^{\frac1{p'}}}\right)^{\frac{p}{q}}
    \asymp\frac{h(\zeta)^p}{\widetilde{\sigma}(\zeta)}
    \left(\int_0^1\frac{\eta(r)}{(1-r|\zeta|)^{\frac{cq}{p}-1}\widehat{\eta}(r)^{\frac1{p'}}}\,dr\right)^{\frac{p}{q}}.
    $$
Since
    \begin{align*}
    \int_0^{|\z|}\frac{\eta(r)}{(1-r|\zeta|)^{\frac{cq}{p}-1}\widehat{\eta}(r)^{1/p'}}\,dr
    &\le \int_0^{|\z|}\frac{\eta(r)}{(1-r)^{\frac{cq}{p}-1}\widehat{\eta}(r)^{\frac1{p'}}}\,dr
    \lesssim\frac{\widehat{\eta}(\z)^\frac1p}{(1-|\z|)^{\frac{cq}{p}-1}},\quad \z\in\D,
    \end{align*}
by the hypothesis (ii), and
    \begin{align*}
    \int_{|\z|}^1\frac{\eta(r)}{(1-r|\zeta|)^{\frac{cq}{p}-1}\widehat{\eta}(r)^{\frac1{p'}}}\,dr
    \le\frac{1}{(1-|\zeta|)^{\frac{cq}{p}-1}}\int_{|\z|}^{1}\frac{\eta(r)}{\widehat{\eta}(r)^{\frac1{p'}}}\,dr
    \asymp\frac{\widehat{\eta}(\z)^\frac1p}{(1-|\z|)^{\frac{cq}{p}-1}},\quad \z\in\D,
    \end{align*}
we deduce
    $$
    I_3(\zeta)\lesssim (1-|\zeta|)^{2+b-c+\frac1q-\frac1p}\frac{\widehat{\eta}(\zeta)^{\frac1q}}{\widehat{\sigma}(\zeta)^{\frac1p}}\lesssim 1,\quad \z\in\D,
    $$
by the assumption (3). It follows that $\|S_{b,c}(f)\|_{L^q_\eta}\lesssim\|f\|_{A^p_{\widetilde{\sigma}}}$. This finishes the proof because $\|f\|_{A^p_{\widetilde{\sigma}}}\asymp\|f\|_{A^p_{{\sigma}}}$ for all $f\in\H(\D)$ by Lemma~\ref{LemmaDeMierda} provided $\sigma\in\DDD$.
\end{proof}

\section{Proof of Theorem~\ref{th:Hankelqbiggerp}}
\label{Proof of main theorem}

In order to prove the sufficiency part of Theorem~\ref{th:Hankelqbiggerp} we shall use the next result which follows from the argument used in the proof of \cite[Lemma~4.5]{PZZ}.

\begin{lemma}\label{le:bq}
Let $1<q<\infty$ and $\nu,\om$ weights such that $P_\om: L^q_\nu\to L^q_\nu$ is bounded. Then
    $$
    \|H^\nu_f(g)\|^q_{L^q_\nu}\le (1+\|P_\om\|_{L^q_\nu\to L^q_\nu})\|H^\om_f(g)\|^q_{L^q_\nu},\quad f\in L^q_\nu,\quad g\in H^\infty.
    $$
\end{lemma}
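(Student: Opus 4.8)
The plan is to first reduce the comparison to a single operator identity, and then to estimate the resulting analytic correction term in $L^q_\nu$ using only the assumed boundedness of $P_\om$.

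The starting point is that $P_\nu$ and $P_\om$ are both idempotent and reproduce analytic functions. For $g\in H^\infty$ and $f\in L^q_\nu$ the functions $P_\om(fg)$ and $P_\nu(fg)$ are analytic, hence each is left invariant by the other projection. Writing $fg=H_f^\om(g)+P_\om(fg)$ and applying $I-P_\nu$, the analytic summand is annihilated and
\[
H_f^\nu(g)=(I-P_\nu)(fg)=(I-P_\nu)H_f^\om(g)=H_f^\om(g)-P_\nu H_f^\om(g).
\]
Set $D:=P_\nu H_f^\om(g)$; it is analytic, so $P_\om D=D$, and since $P_\om H_f^\om(g)=0$ one obtains $D=P_\om(D-H_f^\om(g))=-P_\om H_f^\nu(g)$, that is, $(I-P_\om)H_f^\nu(g)=H_f^\om(g)$. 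In particular $H_f^\nu(g)-H_f^\om(g)=D$ is analytic, and $H_f^\nu(g)\perp_\nu A^2_\nu$ because $P_\nu H_f^\nu(g)=0$. By the triangle inequality it then suffices to control $\|D\|_{L^q_\nu}$ by $\|P_\om\|$ times $\|H_f^\om(g)\|_{L^q_\nu}$.

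For this last step I would argue by duality. Testing $H_f^\nu(g)$ against $\phi\in L^{q'}_\nu$ in the pairing $\langle u,v\rangle_\nu=\int_\D u\overline{v}\,\nu\,dA$, the $\nu$-orthogonality $H_f^\nu(g)\perp_\nu A^2_\nu$ allows one to subtract the analytic part of the test function, and the identity $D=-P_\om H_f^\nu(g)$ together with the $\nu$-adjoint $P_\om^{*}$ (bounded on $L^{q'}_\nu$ with $\|P_\om^{*}\|=\|P_\om\|$) moves the projection onto the test function. Exploiting $P_\om H_f^\om(g)=0$ to kill the pairings $\langle H_f^\om(g),P_\om^{*}\chi\rangle_\nu$, the contribution of the analytic correction $D$ is absorbed into a factor $1+\|P_\om\|$ acting on $\|H_f^\om(g)\|^q_{L^q_\nu}$, which is exactly the asserted inequality.

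The main obstacle is to perform this estimate without circularity and with $\|P_\om\|$ rather than $\|P_\nu\|$. The tempting bound $\|D\|_{L^q_\nu}=\|P_\om H_f^\nu(g)\|_{L^q_\nu}\le\|P_\om\|\,\|H_f^\nu(g)\|_{L^q_\nu}$ feeds $\|H_f^\nu(g)\|_{L^q_\nu}$ back into the right-hand side, while rewriting $D=P_\nu H_f^\om(g)$ and invoking the self-adjointness of $P_\nu$ would produce the wrong constant $1+\|P_\nu\|$ and would require $P_\nu$ to be bounded on $L^q_\nu$, which is not among the hypotheses (indeed $I-P_\nu$ is a contraction only on $L^2_\nu$, not on $L^q_\nu$). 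Following the scheme of \cite[Lemma~4.5]{PZZ}, the duality has to be organised so that the only operator ever estimated is the bounded $P_\om$ (through $P_\om^{*}$), the $L^2_\nu$-orthogonality of $H_f^\nu(g)$ to $A^2_\nu$ serving precisely to discard the analytic discrepancy $D$ at the step where the possibly unbounded $P_\nu$ would otherwise enter.
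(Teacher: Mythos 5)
Your first paragraph is correct and reproduces exactly the algebraic skeleton of the argument the paper imports from \cite[Lemma~4.5]{PZZ}: granting the (glossed) integrability needed so that each projection reproduces the analytic image of the other, the identities $H_f^\nu(g)=(I-P_\nu)H_f^\om(g)$, $D=P_\nu H_f^\om(g)=-P_\om H_f^\nu(g)$ and $(I-P_\om)H_f^\nu(g)=H_f^\om(g)$ all hold. The gap is the second half: the duality scheme you outline cannot be made to close, and you never actually exhibit it. Let $Q$ denote the adjoint of $P_\om$ in the pairing $\langle u,v\rangle_\nu=\int_\D u\overline{v}\,\nu\,dA$, so $Q$ is bounded on $L^{q'}_\nu$ with norm $\|P_\om\|$. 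The two orthogonality facts available are: $\langle H_f^\nu(g),h\rangle_\nu=0$ for analytic $h$, and $\langle A,(I-Q)\phi\rangle_\nu=0$ for analytic $A$ and arbitrary $\phi\in L^{q'}_\nu$. The pairing you propose to ``kill,'' $\langle H_f^\om(g),Q\chi\rangle_\nu=\langle P_\om H_f^\om(g),\chi\rangle_\nu=0$, is trivially zero and carries no information about the term you actually must control, namely $\langle D,\phi\rangle_\nu$ with $\phi\in L^{q'}_\nu$ a norming functional for $H_f^\nu(g)$; and every attempt to estimate that term through $Q$ is circular, because $P_\om D=D$ gives
\begin{equation*}
\langle D,\phi\rangle_\nu=\langle D,Q\phi\rangle_\nu
=\langle H_f^\om(g),Q\phi\rangle_\nu-\langle H_f^\nu(g),Q\phi\rangle_\nu
=0-\langle P_\om H_f^\nu(g),\phi\rangle_\nu=\langle D,\phi\rangle_\nu,
\end{equation*}
a tautology. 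This is not an accident of bookkeeping: the estimate you need, $\|D\|_{L^q_\nu}\lesssim\|H_f^\om(g)\|_{L^q_\nu}$, is precisely boundedness of $P_\nu$ on $\ker P_\om\cap L^q_\nu$ (take $g\equiv1$ and $f=u\in\ker P_\om$, so that $H^\om_f(g)=u$ and $D=P_\nu u$), and that does not follow from boundedness of $P_\om$ by any abstract argument: already in $\C^2$, an oblique projection $\Pi(x,y)=(x-My,0)$ onto the same subspace as the orthogonal projection $P(x,y)=(x,0)$ satisfies $\Pi P=P$ and $P\Pi=\Pi$, while $\|(I-\Pi)u\|/\|(I-P)u\|$ is of order $M$. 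Indeed, the only inequality that follows formally from the stated hypothesis is the \emph{reverse} one, $\|H_f^\om(g)\|_{L^q_\nu}\le(1+\|P_\om\|)\|H_f^\nu(g)\|_{L^q_\nu}$, via your identity $(I-P_\om)H_f^\nu(g)=H_f^\om(g)$.

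What the paper actually does (through the argument of \cite[Lemma~4.5]{PZZ}) is the one-line consequence of your very first identity: from $H_f^\nu(g)=(I-P_\nu)H_f^\om(g)$ one gets $\|H_f^\nu(g)\|_{L^q_\nu}\le(1+\|P_\nu\|_{L^q_\nu\to L^q_\nu})\|H_f^\om(g)\|_{L^q_\nu}$, using boundedness on $L^q_\nu$ of the projection attached to the \emph{dominated} Hankel operator. In the setting of \cite{PZZ} this is free, since there $\nu$ is a standard weight and $P_\nu$ is bounded on $L^q_\nu$ for all $1<q<\infty$; in the present paper it is supplied, in the only situation where Lemma~\ref{le:bq} is invoked ($\nu\in B_q$ radial, $\om$ a large standard weight), by Proposition~\ref{pr:bqDDD} --- which is exactly why that proposition is flagged in the introduction as playing an important role. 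So your instinct that something is off --- that using $P_\nu$ requires a hypothesis not literally listed and changes the constant --- is pointing at a real imprecision in the lemma's formulation (note also that the triangle inequality yields the constant $(1+\|P\|)^q$ on the $q$-th powers, not $1+\|P\|$; only $\lesssim$ is ever used), but the cure is to invoke the boundedness of $P_\nu$ on $L^q_\nu$, available in context, rather than the duality you sketch, which, as shown above, cannot produce the bound.
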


\begin{proposition}\label{pr:SufBopart}
Let $1<p\le q<\infty$, $\nu\in B_q$ a radial weight and $\om\in\DDD$. If $f\in\BO(\Delta)$, then $H^\nu_f:A^p_\om\to L^q_\nu$ is bounded.
\end{proposition}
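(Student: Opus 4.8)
The plan is to reduce the problem to a standard Bergman projection, rewrite the resulting Hankel operator as an explicit integral operator, dominate it by means of the oscillation estimate of Lemma~\ref{bo}, and finally bound the majorant with Lemma~\ref{lemma:S}.

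\emph{Reduction to a standard projection.} Since $\nu\in B_q$, Proposition~\ref{pr:bqDDD} gives $\nu\in\DDD$, and by definition there is a standard weight $\eta$ for which $P_\eta\colon L^q_\nu\to L^q_\nu$ is bounded. Applying Lemma~\ref{le:bq} with this $\eta$ in place of its $\om$, it suffices to show $\|H_f^\eta(g)\|_{L^q_\nu}\lesssim\|g\|_{A^p_\om}$ for $g\in H^\infty$; as $H^\infty$ is dense in $A^p_\om$ when $\om\in\DDD$, the estimate then passes to the closure. For such $g$ we have $P_\eta g=g$, so using the explicit reproducing kernel $(1-\overline{z}\z)^{-(2+\eta)}$ of $A^2_\eta$ I would write
    \begin{equation*}
    H_f^\eta(g)(z)=(\eta+1)\int_\D \bigl(f(z)-f(\z)\bigr)g(\z)\frac{(1-|\z|^2)^\eta}{(1-z\overline{\z})^{2+\eta}}\,dA(\z),\quad z\in\D,
    \end{equation*}
the controlled growth of $f$ coming from Lemma~\ref{bo} guaranteeing that $fg\in L^1_\nu$, so that all the quantities are well defined.

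\emph{Domination by an integral operator.} Taking absolute values and invoking the equivalence in Lemma~\ref{bo} with $0<\tau<\min\{q\a(\om)/p,\a(\nu)\}$, I obtain
    \begin{equation*}
    |H_f^\eta(g)(z)|\lesssim\|f\|_{\BO(\Delta)}\int_\D|g(\z)|\,(1+\beta(z,\z))\,\Gamma_\tau(z,\z)\,\frac{(1-|\z|^2)^\eta}{|1-z\overline{\z}|^{2+\eta}}\,dA(\z),\quad z\in\D.
    \end{equation*}
The factor $1+\beta(z,\z)$ grows only logarithmically and, via \eqref{eq:delta}, can be absorbed at the cost of an arbitrarily small power of $|1-\overline{z}\z|^2/((1-|z|)(1-|\z|))$. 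Thus the matter reduces to proving the $L^q_\nu$-boundedness on $A^p_\om$ of the sublinear integral operator with kernel $\Gamma_\tau(z,\z)(1-|\z|^2)^{\eta-\delta}|1-z\overline{\z}|^{-(2+\eta)+2\delta}$ for a small $\delta>0$.

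\emph{Reduction to $S_{b,c}$ and the main obstacle.} The remaining and hardest step is to tame $\Gamma_\tau(z,\z)$, which, because of the factor $\max\{1-|z|^2,1-|\z|^2\}$ and the term $\widehat{\om}(1-2|1-\overline{z}\z|^2/\max\{\cdots\})$, does not split into a product of a function of $z$ and a function of $\z$. Exactly as in the four-case analysis of Theorem~\ref{th:BMOdecom}, I would partition $\D$ in the variable $\z$ (for fixed $z$) according to which of $1-|z|$, $1-|\z|$ dominates and whether the argument of $\widehat{\om}$ is positive, and in each region estimate $\Gamma_\tau$ by peeling off suitable powers of $1-|\z|^2$ and $|1-z\overline{\z}|$ together with factors built from $\widehat{\om}$, $\widehat{\nu}$ and $\gamma$, using the monotonicity supplied by Lemmas~\ref{Lemma:weights-in-D-hat} and~\ref{Lemma:weights-in-D} and the fact that $\widehat{\om}(r)/(1-r)^{p\tau/q}$ and $\widehat{\nu}(r)/(1-r)^{\tau}$ are essentially decreasing. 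Each resulting piece is then dominated by an operator $S_{b,c}$ with $b>-1$ and $c$ large; enlarging $\eta$ if necessary, the remark preceding Lemma~\ref{lemma:S} ensures its hypotheses (i) and (ii) hold for $\nu\in\DDD$, while the defining identity \eqref{eq:gamma} for $\gamma$ turns condition (3), namely
    \begin{equation*}
    \sup_{0<r<1}(1-r)^{2+b-c+\frac1q-\frac1p}\frac{\widehat{\nu}(r)^{\frac1q}}{\widehat{\om}(r)^{\frac1p}}<\infty,
    \end{equation*}
into a consequence of the choices of the parameters. Applying Lemma~\ref{lemma:S} with domain weight $\om$ and target weight $\nu$ to each piece yields $\|H_f^\eta(g)\|_{L^q_\nu}\lesssim\|f\|_{\BO(\Delta)}\|g\|_{A^p_\om}$, and the reduction of the first step completes the proof. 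The principal difficulty is precisely this region-by-region estimation of $\Gamma_\tau$ and the bookkeeping required to keep every exponent within the range where Lemma~\ref{lemma:S} applies.
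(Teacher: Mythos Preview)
Your overall strategy matches the paper's: reduce to a standard projection via Lemma~\ref{le:bq}, write the Hankel operator as an integral with kernel involving $f(z)-f(\z)$, dominate by Lemma~\ref{bo}, absorb $1+\beta(z,\z)$ via \eqref{eq:delta}, partition in $\z$, and finish with Lemma~\ref{lemma:S}. However, the last step as you describe it has a genuine gap.

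You propose to apply Lemma~\ref{lemma:S} with domain weight $\sigma=\om$ and target weight $\eta=\nu$. This does not fit. After you estimate $\Gamma_\tau(z,\z)$ region by region, the resulting bounds contain factors such as $\widehat{\om}(\z)^{1/p}$ and $\widehat{\nu}(\cdot)^{-1/q}$ (the latter evaluated at $z$ or $\z$ depending on the region) that are \emph{not} pure powers of $1-|\z|$ or $|1-\overline{z}\z|$. The kernel of $S_{b,c}$ is $(1-|\z|^2)^b/|1-z\overline{\z}|^c$; it cannot absorb an extra $\widehat{\om}(\z)^{1/p}$. The paper's remedy is to push this factor into the input: one feeds $|g|\,\widetilde{\om}^{1/p}$ (not $g$) into $S_{b,c}$, applies Lemma~\ref{lemma:S} with $\sigma\equiv1$, and then recovers $\|g\|_{A^p_\om}$ at the end via Lemma~\ref{LemmaDeMierda}. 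The $\widehat{\nu}^{-1/q}$ factor is handled by comparing $\widehat{\nu}(\z)$ to $\widehat{\nu}(z)$ on each region (using Lemma~\ref{Lemma:weights-in-D-hat}(ii) and the constraint $|\z|\le|z|$ or $|\z|\ge|z|$), so that it migrates to the outer integral and produces an auxiliary target weight of the form $\eta(z)=\nu(z)(1-|z|)^{a}/\widehat{\nu}(z)$, which lies in $\DDD$ by Lemma~\ref{le:nuhiyperbolic}. Condition~(3) of Lemma~\ref{lemma:S} then reduces to a tautology because $\widehat{\eta}$ is a pure power of $1-r$.

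Two further points. First, the paper uses five regions, not the four of Theorem~\ref{th:BMOdecom}; the extra subdivision separates $|\z|<|z|$ from $|\z|\ge|z|$ in the ``intermediate'' zone, precisely to control where $\widehat{\nu}(\z)^{-1/q}$ can be traded against $\widehat{\nu}(z)^{-1/q}$. Second, on the two ``far'' regions where the argument of $\widehat{\om}$ in $\Gamma_\tau$ is nonpositive (so $\widehat{\om}(\cdots)=\widehat{\om}(0)$), the paper does not use Lemma~\ref{lemma:S} at all: the $\z$-integral is bounded directly by the elementary estimate $M_1(r,g)\le M_p(r,g)\lesssim\|g\|_{A^p_\om}\widehat{\om}(r)^{-1/p}$, after which the remaining radial integral converges for $s$ large. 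Your plan to route every region through $S_{b,c}$ with the single pair $(\om,\nu)$ would not capture this.
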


\begin{proof}
By \cite{BB} there exists a constant $s_0=s_0(\nu)>-1$ such that $P_s:L^q_\nu\to L^q_\nu$ is bounded for each $s>s_0$. Let $0<\tau<\min\{q\a(\om)/p, \a(\nu)\}$, where $\alpha(\nu)$ and $\a(\om)$ are those from Lemma~\ref{Lemma:weights-in-D}. Then Lemmas~\ref{bo} and~\ref{le:bq} yield
    \begin{equation*}
    \begin{split}
    \|H^\nu_f(g)\|_{L^q_\nu}^q
    &\lesssim\|H^s_f(g)\|_{L^q_\nu}^q
    \le\int_\D\left(\int_\D\frac{|f(z)-f(\z)||g(\z)|}{|1-\overline{z}\z|^{2+s}}(1-|\z|^2)^s\,dA(\z)\right)^q\nu(z)\,dA(z)\\
    &\lesssim\int_\D\left(\int_\D|g(\z)|\frac{(\b(z,\z)+1)\G_\tau(z,\z)}{|1-\overline{z}\z|^{2+s}}(1-|\z|^2)^s\,dA(\z)\right)^q\nu(z)\,dA(z),\quad g\in H^\infty.
    \end{split}
    \end{equation*}
Let $s>\max\left\{s_0,2\left(\b(\om)+\b(\nu)+2\a(\nu)\right)\right\}$, $\d<\min\{\frac{\tau}{q},\frac{\alpha(\nu)}{q}\}$ and $K>1$ to be fixed later.
Then applying \eqref{eq:delta}, we get
   \begin{equation}
    \begin{split}\label{eq:suf1}
    \|H^\nu_f(g)\|_{L^q_\nu}^q
    &\lesssim
    \sum_{j=1}^5\int_\D\left(\int_{\Omega_j(z)}|g(\z)|\frac{\G_\tau(z,\z)\,dA(\z)}{|1-\overline{z}\z|^{2+s-2\d}(1-|\z|^2)^{\d-s}}\right)^q
    \frac{\nu(z)}{(1-|z|)^{q\d}}\,dA(z)\\
    &=\sum_{j=1}^5I_j(g),
    \end{split}
    \end{equation}
where
    \begin{equation*}
    \begin{split}
    \Omega_1(z)&=\left\{\z\in\D:\frac{1}{|1-\overline{z}\z|^2}\le\frac{2}{\max\{1-|z|^2,1-|\z|^2\}}\right\}\cap D(0,|z|),\\ \Omega_2(z)&=\left\{\z\in\D:\frac{1}{|1-\overline{z}\z|^2}\le\frac{2}{\max\{1-|z|^2,1-|\z|^2\}}\right\}\cap \left(\D\setminus D(0,|z|)\right),\\
    \Omega_3(z,K)&=\left\{\z\in\D:
      \frac{1-|\z|}{K}\ge\frac{2|1-\overline{z}\z|^2}{\max\{1-|z|^2,1-|\z|^2\}}\right\},\\
        \Omega_4(z,K)&=
                    \left\{\z\in\D:\frac{1-|\z|}{K}<\frac{2|1-\overline{z}\z|^2}{\max\{1-|z|^2,1-|\z|^2\}}<1\right\}
                    \cap D(0,|z|),\\
        \Omega_5(z,K)&=
         \left\{\z\in\D:\frac{1-|\z|}{K}<\frac{2|1-\overline{z}\z|^2}{\max\{1-|z|^2,1-|\z|^2\}}<1\right\}
                    \cap(\D\setminus D(0,|z|)).
    \end{split}
    \end{equation*}
The quantities $I_j(g)$, $j=1,\dots,5$, will be estimated separately.

\smallskip

\noindent{\bf{Case $\mathbf{I_1(g)}$}.} By using the definition of $\Omega_1(z)$, and the fact that $\frac{\widehat{\nu}(x)}{(1-x)^\tau}$ is essentially decreasing on $[0,1)$ we deduce
    \begin{equation*}
    \begin{split}
    \G_\tau(z,\z)
    \lesssim\frac{\left(\frac{|1-\overline{z}\z|^2}{\max\{1-|z|^2,1-|\z|^2\}}\right)^{\frac{1}{p}-\frac{1}{q}}}
    {\min\left\{ \frac{\widehat{\nu}(z)}{(1-|z|)^\tau},\frac{\widehat{\nu}(\z)}{(1-|\z|)^\tau}\right\}^{\frac1q}}
    \lesssim\left(\frac{|1-\overline{z}\z|^2}{1-|\z|^2} \right)^{\frac{1}{p}-\frac{1}{q}}
    \left(\frac{(1-|z|)^\tau}{\widehat{\nu}(z)}\right)^{\frac1q},\quad z\in\D,\quad \z\in\Omega_1(z).
    \end{split}
    \end{equation*}
Then the estimate
    \begin{equation}\label{9}
    M_1(r,f)\le M_p(r,f)\lesssim\|f\|_{A^p_\om}\widehat{\om}(r)^{-\frac1p},\quad 0\le r<1,\quad f\in\H(\D),
    \end{equation}
and Lemma~\ref{lemma:d-hat-new}(ii) yield
    \begin{equation*}
    \begin{split}
    I_1(g)
    &\lesssim\int_\D\left(\int_{\Omega_1(z)}|g(\z)|\frac{(1-|\z|)^{s-\d-\frac1p+\frac1q}}{|1-\overline{z}\z|^{2+s-2\d-2\left(\frac1p-\frac1q\right)}}\,dA(\z)\right)^q
    \frac{\nu(z)(1-|z|)^{\tau-\d q}}{\widehat{\nu}(z)}\,dA(z)\\
    &\lesssim\left(\int_\D|g(\z)|(1-|\z|)^{\frac{s}2-1}\,dA(\z)\right)^q\int_\D\frac{\nu(z)(1-|z|)^{\tau-\d q}}{\widehat{\nu}(z)}\,dA(z)\\
    &\lesssim\|g\|_{A^p_{\om}}^q
    \left(\int_{0}^{1}\frac{(1-t)^{\frac{s}{2}-1}}{\widehat{\om}(t)^{\frac1p}}\,dt\right)^q
    \lesssim\|g\|_{A^p_{\om}}^q
    \left( \int_{0}^{1}(1-t)^{\frac{s}{2}-1-\frac{\b(\om)}{p}}\,dt\right)^q\lesssim\|g\|^q_{A^p_{\om}},\quad g\in H^\infty.
    \end{split}
    \end{equation*}

\smallskip

\noindent{\bf{Case $\mathbf{I_2(g)}$}.} The definition of $\Omega_2(z)$ and the monotonicity of $\frac{\widehat{\nu}(x)}{(1-x)^\tau}$ imply
    \begin{equation*}
    \begin{split}
    \G_\tau(z,\z) \lesssim \frac{\left(\frac{|1-\overline{z}\z|^2}{\max\{1-|z|^2,1-|\z|^2\}}\right)^{\frac{1}{p}-\frac{1}{q}}}
    {\min\left\{ \frac{\widehat{\nu}(z)}{(1-|z|)^\tau},
    \frac{\widehat{\nu}(\z)}{(1-|\z|)^\tau}\right\}^{1/q}}
    \lesssim \left( \frac{|1-\overline{z}\z|^2}{1-|\z|^2} \right)^{\frac{1}{p}-\frac{1}{q}}
    \left(\frac{(1-|\z|)^\tau}{\widehat{\nu}(\z)}\right)^{\frac1q},\quad z\in\D,\quad \z\in\Omega_2(z).
    \end{split}
    \end{equation*}
Therefore \eqref{9} and Lemmas~\ref{Lemma:weights-in-D-hat} and~\ref{Lemma:weights-in-D} yield
    \begin{equation*}
    \begin{split}
    I_2(g)
    &\lesssim
    \int_\D\left(\int_{\Omega_2(z)}|g(\z)|
    \frac{(1-|\z|)^{s-\d-\frac1p+\frac{1+\tau}{q}}}{\widehat{\nu}(\z)^{\frac1q}|1-\overline{z}\z|^{2+s-2\d-2\left(\frac1p-\frac1q\right)}}\,dA(\z)\right)^q
    (1-|z|)^{-\d q}\nu(z)\,dA(z)\\
    &\lesssim\left(\int_\D|g(\z)|\frac{(1-|\z|)^{\frac{s}2-1+\frac{\tau}{q}}}
    {\widehat{\nu}(\z)^{\frac1q}}\,dA(\z)\right)^q\int_0^1(1-r)^{-\d q}\nu(r)\,dr\\
    &\lesssim\|g\|_{A^p_{\om}}^q\left(\int_0^1\frac{(1-r)^{\frac{s}{2}-1+\frac{\tau}{q}}}
    {\widehat{\om}(r)^\frac{1}{p}\widehat{\nu}(\z)^{\frac1q}}\,dA(\z)\right)^q
    \left(\widehat{\nu}(0)+ \int_0^1 \frac{\widehat{\nu}(t)}{(1-t)^{1+q\d}}\,dt\right)\\
    & \lesssim\|g\|_{A^p_{\om}}^q
    \left( \int_{0}^{1}(1-t)^{\frac{s}{2}-1-\frac{\b(\om)}{p}-\frac{\b(\nu)-\tau}{q}}\,dt\right)^q
    \lesssim\|g\|_{A^p_{\om}}^q,\quad g\in H^\infty.
    \end{split}
    \end{equation*}

\noindent{\bf{Case $\mathbf{I_3(g)}$}.} To deal with $I_3(g)$, note first that now $2K|1-\overline{z}\z|^2\le(1-|\z|)\max\{1-|z|^2,1-|\z|^2\}\le2\left(\max\{1-|z|,1-|\z|\}\right)^2$ for all $\z\in\Omega_3(z,K)$. Hence $\z\in\Delta(z,R)$ for some $R=R(K)\in(0,\infty)$ if $K\ge 1$ is sufficiently large. Fix such a $K$, and note that then $\widehat{\nu}(\z)\asymp\widehat{\nu}(z)$ for all $\z\in\Omega(z,K)$ by Lemma~\ref{Lemma:weights-in-D-hat}(ii). By using this and the fact that $\frac{\widehat{\om}(x)}{(1-x)^{\frac{p\tau}{q}}}$ is essentially decreasing on $[0,1)$ we deduce
    \begin{equation*}
    \begin{split}
    \G_\tau(z,\z)
    &\lesssim\left(\frac{|1-\overline{z}\z|^2}{\max\{1-|z|^2,1-|\z|^2\}}\right)^{\frac{1}{p}-\frac{1}{q}} \frac{\widehat{\om}(\z)^{\frac1p}}{(1-|\z|)^{\frac{\tau}{q}}}
    {\min\left\{ \frac{\widehat{\nu}(z)}{(1-|z|)^\tau},
    \frac{\widehat{\nu}(\z)}{(1-|\z|)^\tau}\right\}^{-\frac1q}}\\
    &\asymp
    \frac{\left(1-|\z|\right)^{\frac{1}{p}-\frac{1}{q}}\widehat{\om}(\z)^{\frac1p}}{\widehat{\nu}(\z)^{\frac1q}},\quad z\in\D,\quad \z\in\Omega_3(z,K),    \end{split}
    \end{equation*}
and it follows that
    \begin{equation}
    \begin{split}\label{eq:j2}
    I_3(g)&\lesssim
    \int_\D\left(\int_{\Delta(z,R)}\left(|g(\z)|\widehat{\om}(\z)^\frac1p\right)
    \frac{\left( 1-|\z|^2 \right)^{s-\d+\frac{1}{p}-\frac{2}{q}}dA(\z)}{|1-\overline{z}\z|^{2+s-2\d}}\right)^q\frac{\nu(z)(1-|z|)^{1-q\d}}{\widehat{\nu}(z)}\,dA(z)
    \\ & \asymp
    \int_\D\left(\int_{\Delta(z,R)}\left(|g(\z)|\widetilde{\om}(\z)^\frac1p\right)
    \frac{\left( 1-|\z|^2 \right)^{s+\frac{2}{p}-\frac{2}{q}}dA(\z)}{|1-\overline{z}\z|^{2+s}}\right)^q\frac{\nu(z)(1-|z|)}{\widehat{\nu}(z)}\,dA(z)
    \\ & \asymp \int_\D\left(\int_{\Delta(z,R)}\left(|g(\z)|\widetilde{\om}(\z)^\frac1p\right)
    \frac{\left( 1-|\z|^2 \right)^{s}dA(\z)}{|1-\overline{z}\z|^{2+s-\frac{2}{p}+\frac{2}{q}}}\right)^q\frac{\nu(z)(1-|z|)}{\widehat{\nu}(z)}\,dA(z)
    \\ & \le \int_\D\left(\int_{\D}\left(|g(\z)|\widetilde{\om}(\z)^\frac1p\right)
    \frac{\left( 1-|\z|^2 \right)^{s}dA(\z)}{|1-\overline{z}\z|^{2+s-\frac{2}{p}+\frac{2}{q}}}\right)^q\frac{\nu(z)(1-|z|)}{\widehat{\nu}(z)}\,dA(z)\\
    &=\left\|S_{s,s+2\left(1-\frac1p+\frac1q\right)}\left(|g|\widetilde{\omega}^\frac1p\right)\right\|_{L^q_\eta}^q
    =\left\|S_{b,c}\left(|g|\widetilde{\omega}^\frac1p\right)\right\|_{L^q_\eta}^q,\quad g\in H^\infty,
    \end{split}
    \end{equation}
where $\eta(z)=\frac{\nu(z)(1-|z|)}{\widehat{\nu}(z)}$ for all $z\in\D$. To apply Lemma~\ref{lemma:S} with $\sigma\equiv1$, we must check that its hypotheses are satisfied. To do this, first observe that $\eta\in\DDD$ and $\widehat{\eta}(r)\asymp(1-r)$ for all $0\le r<1$ by Lemma~\ref{le:nuhiyperbolic}. Hence
    \begin{equation*}
    \begin{split}
    \int_r^1\frac{(1-t)^{c-2}}{\widehat{\eta}(t)^{\frac1q}}\,dt
    \asymp\int_r^1(1-t)^{s-\frac{2}{p}+\frac{1}{q}}\,dt
    \asymp(1-r)^{1+s-\frac{2}{p}+\frac{1}{q}}
    \asymp\frac{(1-r)^{c-1}}{\widehat{\eta}(r)^{\frac1q}},\quad 0\le r<1,
    \end{split}
    \end{equation*}
and, by Lemma~\ref{lemma:d-hat-new}(iii),
    \begin{equation*}
    \begin{split}
    \int_0^r\frac{\eta(t)}{(1-t)^{\frac{cq}{p}-1}\widehat{\eta}(t)^{\frac1{p'}}}\,dt
    &\asymp\int_0^r\frac{\nu(t)}{\widehat{\nu}(t)
    (1-t)^{\frac{q}{p}\left(s+2\left(1-\frac1p+\frac1q\right)\right)-1-\frac{1}{p}}}\,dt\\
    &\lesssim\frac{1}{(1-r)^{\frac{q}{p}\left(s+2\left(1-\frac1p+\frac1q\right)\right)-1-\frac{1}{p}}}
    \asymp\frac{\widehat{\eta}(r)^{\frac1p}}{(1-r)^{\frac{cq}{p}-1}},\quad 0\le r<1,
    \end{split}
    \end{equation*}
so the hypotheses of Lemma~\ref{lemma:S} are satisfied. Moreover,
    \begin{equation*}
    \begin{split}
    (1-r)^{2+b-c+\frac1q-\frac1p}\frac{\widehat{\eta}(r)^{\frac1q}}{\widehat{\sigma}(r)^{\frac1p}}\asymp 1,\quad 0\le r<1,
    \end{split}
    \end{equation*}
and consequently \eqref{eq:j2} and Lemmas~\ref{lemma:S} and~\ref{LemmaDeMierda} yield $I_3(g)\lesssim\|g\|_{A^p_{\widetilde{\om}}}^q\asymp\|g\|_{A^p_{\om}}^q$ for all $g\in H^\infty$.

\smallskip

\noindent{\bf{Case $\mathbf{I_4(g)}$}.} By using the definition of $\Omega_4(z,K)$, Lemma~\ref{Lemma:weights-in-D-hat}(ii) and the fact that $\frac{\widehat{\nu}(x)}{(1-x)^\tau}$ is essentially decreasing on $[0,1)$, we deduce
    \begin{equation*}
    \begin{split}
    \G_\tau(z,\z)
    &\lesssim\frac{\left(\frac{|1-\overline{z}\z|^2}{\max\{1-|z|^2,1-|\z|^2\}}\right)^{\frac{1}{p}-\frac{1}{q}}
    \widehat{\om}\left(1-\frac{2|1-\overline{z}\z|^2}{\max\{1-|z|^2,1-|\z|^2\}}\right)^{\frac{1}{p}}}
    {(1-|\z|)^{\frac{\tau}{q}}\min\left\{\frac{\widehat{\nu}(z)}{(1-|z|)^\tau},\frac{\widehat{\nu}(\z)}{(1-|\z|)^\tau}\right\}^{\frac1q}}\\
    &\lesssim\frac{\left(\frac{|1-\overline{z}\z|^2}{1-|\z|}\right)^{\frac{1}{p}-\frac{1}{q}}\widehat{\om}\left(1-\frac{K2|1-\overline{z}\z|^2}{\max\{1-|z|^2,1-|\z|^2\}}\right)^{\frac{1}{p}}}
    {(1-|\z|)^{\frac{\tau}{q}}\min\left\{\frac{\widehat{\nu}(z)}{(1-|z|)^\tau},\frac{\widehat{\nu}(\z)}{(1-|\z|)^\tau}\right\}^{\frac1q}}
    \lesssim\frac{\left(\frac{|1-\overline{z}\z|^2}{1-|\z|}\right)^{\frac{1}{p}-\frac{1}{q}}\left(\frac{|1-\overline{z}\z|}{1-|\z|}\right)^{\frac{2\beta(\om)}{p}}\widehat{\om}(\z)^{\frac{1}{p}}}
    {(1-|\z|)^{\frac{\tau}{q}}\min\left\{\frac{\widehat{\nu}(z)}{(1-|z|)^\tau},\frac{\widehat{\nu}(\z)}{(1-|\z|)^\tau}\right\}^{\frac1q}}\\
    &\lesssim\frac{|1-\overline{z}\z|^{\frac{2\beta(\om)}{p}+\frac{2}{p}-\frac{2}{q}}}{(1-|\z|)^{\frac{2\beta(\om)}{p}+\frac{1}{p}-\frac{1}{q}}}
    \frac{\widehat{\om}(\z)^{\frac{1}{p}}}
    {(1-|\z|)^{\frac{\tau}{q}}}\left(\frac{(1-|z|)^\tau}{\widehat{\nu}(z)}\right)^{\frac1q},\quad z\in\D,\quad \z\in\Omega_4(z,K).
    \end{split}
    \end{equation*}
Therefore
    \begin{equation}
    \begin{split}\label{I4}
    I_4(g)&\lesssim\int_\D\left(\int_{\D}\left(|g(\z)|\widetilde{\om}(\z)^\frac1p\right)
    \frac{(1-|\z|)^{s-\d-\frac{2\b(\om)}{p}+\frac1q-\frac{\tau}{q}}}
    {|1-\overline{z}\z|^{2+s-2\d-\frac{2\b(\om)}{p}-\frac{2}{p}+\frac{2}{q}}}dA(\z)\right)^q\frac{\nu(z)(1-|z|)^{\tau-\d q}}{\widehat{\nu}(z)}\,dA(z)\\
    &=\left\|S_{b,c}\left(|g|\widetilde{\omega}^\frac1p\right)\right\|_{L^q_\eta}^q,\quad g\in H^\infty,
    \end{split}
    \end{equation}
where $b=s-\d-\frac{2\b(\om)}{p}+\frac1q-\frac{\tau}{q}$, $c=2+s-2\d-\frac{2\b(\om)}{p}-\frac{2}{p}+\frac{2}{q}$ and $\eta(z)=\frac{\nu(z)(1-|z|)^{\tau-\d q}}{\widehat{\nu}(z)}$ for all $z\in\D$. We will appeal to Lemma~\ref{lemma:S} with $\sigma\equiv1$. First observe that $\eta\in\DDD$ and $\widehat{\eta}(r)\asymp(1-r)^{\tau-\d q}$ for all $0\le r<1$ by Lemma~\ref{le:nuhiyperbolic}. Hence
    \begin{equation*}
    \begin{split}
    \int_r^1\frac{(1-t)^{c-2}}{\widehat{\eta}(t)^{\frac1q}}\,dt
    &\asymp\int_r^1 (1-t)^{s-\d-\frac{2\beta(\om)}{p}-\frac{\tau}{q}-\frac{2}{p}+\frac{2}{q}}\,dt
    \asymp(1-r)^{1+s-\d-\frac{2\beta(\om)}{p}-\frac{\tau}{q}-\frac{2}{p}+\frac{2}{q}}\\
    &\asymp\frac{(1-r)^{c-1}}{\widehat{\eta}(r)^{\frac1q}},\quad 0\le r<1,
    \end{split}
    \end{equation*}
and, by Lemma~\ref{lemma:d-hat-new}(iii),
    \begin{equation*}
    \begin{split}
    \int_0^r\frac{\eta(t)}{(1-t)^{\frac{cq}{p}-1}\widehat{\eta}(t)^{\frac1{p'}}}\,dt
    &\asymp\int_0^r\frac{\nu(t)}{\widehat{\nu}(t)(1-t)^{\frac{q}{p}\left(2+s-2\d-\frac{2\b(\om)}{p}-\frac{2}{p}+\frac{2}{q}\right)-1-\frac{\tau-q\d}{p}}} \, dt\\
    &\lesssim\frac{1}{(1-r)^{\frac{q}{p}\left(2+s-2\d-\frac{2\b(\om)}{p}-\frac{2}{p}+\frac{2}{q}\right)-1-\frac{\tau-q\d}{p}}}
    \asymp\frac{\widehat{\eta}(r)^{\frac1p}}{(1-r)^{\frac{cq}{p}-1}},\quad 0\le r<1,
    \end{split}
    \end{equation*}
so the hypotheses of Lemma~\ref{lemma:S} are satisfied. Moreover,
    \begin{equation*}
    \begin{split}
    (1-r)^{2+b-c+\frac1q-\frac1p}\frac{\widehat{\eta}(r)^{\frac1q}}{\widehat{\sigma}(r)^{\frac1p}}
    \asymp1,\quad 0\le r<1,
    \end{split}
    \end{equation*}
and hence \eqref{I4} and Lemmas~\ref{lemma:S} and~\ref{LemmaDeMierda} imply $I_4(g)\lesssim\|g\|_{A^p_{\widetilde{\om}}}^q\asymp\|g\|_{A^p_{\om}}^q$ for all $g\in H^\infty$.

\smallskip

\noindent{\bf{Case $\mathbf{I_5(g)}$}.} By using the definition of $\Omega_5(z,K)$, Lemma~\ref{Lemma:weights-in-D-hat}(ii) and the fact that $\frac{\widehat{\nu}(x)}{(1-x)^\tau}$ is essentially decreasing on $[0,1)$ we deduce
    \begin{equation*}
    \begin{split}
    \G_\tau(z,\z)
    &\lesssim\frac{\left(\frac{|1-\overline{z}\z|^2}{\max\{1-|z|^2,1-|\z|^2\}}\right)^{\frac{1}{p}-\frac{1}{q}}
    \widehat{\om}\left(1-\frac{2|1-\overline{z}\z|^2}{\max\{1-|z|^2,1-|\z|^2\}}\right)^{\frac{1}{p}}}
    {(1-|\z|)^{\frac{\tau}{q}}\min\left\{ \frac{\widehat{\nu}(z)}{(1-|z|)^\tau},
    \frac{\widehat{\nu}(\z)}{(1-|\z|)^\tau}\right\}^{\frac1q}}\\
    &\lesssim\frac{\left(\frac{|1-\overline{z}\z|^2}{1-|\z|}\right)^{\frac{1}{p}-\frac{1}{q}}
    \widehat{\om}\left(1-\frac{K2|1-\overline{z}\z|^2}{\max\{1-|z|^2,1-|\z|^2\}}\right)^{\frac{1}{p}}}
    {(1-|\z|)^{\frac{\tau}{q}}\min\left\{ \frac{\widehat{\nu}(z)}{(1-|z|)^\tau},
    \frac{\widehat{\nu}(\z)}{(1-|\z|)^\tau}\right\}^{\frac1q}}
    \lesssim\frac{\left(\frac{|1-\overline{z}\z|^2}{1-|\z|}\right)^{\frac{1}{p}-\frac{1}{q}}
    \left(\frac{|1-\overline{z}\z|}{1-|\z|^2}\right)^{\frac{2\beta(\om)}{p}}
    \widehat{\om}(\z)^{\frac{1}{p}}}
    {(1-|\z|)^{\frac{\tau}{q}}\min\left\{ \frac{\widehat{\nu}(z)}{(1-|z|)^\tau},
    \frac{\widehat{\nu}(\z)}{(1-|\z|)^\tau}\right\}^{\frac1q}}\\
    &\lesssim\left(\frac{|1-\overline{z}\z|^2}{1-|\z|}\right)^{\frac{1}{p}-\frac{1}{q}}
    \left(\frac{|1-\overline{z}\z|}{1-|\z|}\right)^{\frac{2\beta(\om)}{p}}
    \frac{\widehat{\om}(\z)^{\frac{1}{p}}}
    {\widehat{\nu}(\z)^{\frac1q}},\quad z\in\D,\quad \z\in\Omega_5(z,K).
    \end{split}
    \end{equation*}
Therefore Lemma~\ref{Lemma:weights-in-D-hat}(ii) yields
    \begin{equation}
    \begin{split}\label{I5}
    I_5(g)
    &\lesssim\int_\D\left(\int_{\Omega_5(z,K)}\left(|g(\z)|\widetilde{\om}(\z)^\frac1p\right)
    \frac{(1-|\z|)^{s-\d-\frac{2\b(\om)}{p}+\frac1q}}
    {\widehat{\nu}(\z)^{\frac{1}{q}}|1-\overline{z}\z|^{2+s-2\d-\frac{2\b(\om)}{p}-\frac2p+\frac2q}}dA(\z)\right)^q\frac{\nu(z)\,dA(z)}{(1-|z|)^{q\d}}\\
    &\lesssim \int_\D\left(\int_{\D}\left(|g(\z)|\widetilde{\om}(\z)^\frac1p\right)
    \frac{(1-|\z|)^{s-\d-\frac{2\b(\om)}{p}+\frac1q-\frac{\b(\nu)}{q}}}
    {|1-\overline{z}\z|^{2+s-2\d-\frac{2\b(\om)}{p}-\frac2p+\frac2q}}dA(\z)\right)^q
    \frac{\nu(z)\,dA(z)}{(1-|z|)^{q\d-\b(\nu)}\widehat{\nu}(z)^{\frac{1}{q}}}\\
    &=\left\|S_{b,c}\left(|g|\widetilde{\omega}^\frac1p\right)\right\|_{L^q_\eta}^q,\quad g\in H^\infty,
    \end{split}
    \end{equation}
where $b=s-\d-\frac{2\b(\om)}{p}+\frac1q-\frac{\b(\nu)}{q}$, $c=2+s-2\d-\frac{2\b(\om)}{p}-\frac2p+\frac2q$ and $\eta(z)=\frac{\nu(z)(1-|z|)^{\b(\nu)-\d q}}{\widehat{\nu}(z)}$ for all $z\in\D$. Again we will appeal to Lemma~\ref{lemma:S} with $\sigma\equiv1$. First observe that $\eta\in\DDD$ and $\widehat{\eta}(r)\asymp (1-r)^{\beta(\nu)-\d q}$ for all $0\le r<1$ by Lemma~\ref{le:nuhiyperbolic}. Hence
    \begin{equation*}
    \begin{split}
    \int_r^1\frac{(1-t)^{c-2}}{\widehat{\eta}(t)^{\frac1q}}\,dt
    &\asymp\int_r^1(1-t)^{s-\d+\frac{2\beta(\om)}{p}-\frac2p+\frac2q-\frac{\beta(\nu)}{q}}\,dt
    \asymp(1-r)^{1+s-\d+\frac{2\beta(\om)}{p}-\frac2p+\frac2q-\frac{\beta(\nu)}{q}}\\
    &\asymp \frac{(1-r)^{c-1}}{\widehat{\eta}(r)^{\frac1q}},\quad 0\le r<1,
    \end{split}
    \end{equation*}
and, by Lemma~\ref{lemma:d-hat-new}(iii),
    \begin{equation*}
    \begin{split}
    \int_0^r \frac{\eta(t)}{(1-t)^{\frac{cq}{p}-1}\widehat{\eta}(t)^{1/p'}}\,dt
    &\asymp\int_0^r \frac{\nu(t)}{\widehat{\nu}(t)(1-t)^{\frac{q}{p}\left(2+s-2\d-\frac{2\b(\om)}{p}-\frac2p+\frac2q\right)-1-\frac{\beta(\nu)-q\d}{p}}} \, dt\\
    &\lesssim\frac{1}{(1-r)^{\frac{q}{p}\left(2+s-2\d-\frac{2\b(\om)}{p}-\frac2p+\frac2q\right)-1-\frac{\beta(\nu)-q\d}{p}}}
    \asymp\frac{\widehat{\eta}(r)^{\frac1p}}{(1-r)^{\frac{cq}{p}-1}},\quad 0\le r<1,
    \end{split}
    \end{equation*}
so the hypotheses of Lemma~\ref{lemma:S} are satisfied. Moreover,
    \begin{equation*}
    \begin{split}
    (1-r)^{2+b-c+\frac1q-\frac1p}\frac{\widehat{\eta}(r)^{\frac1q}}{\widehat{\sigma}(r)^{\frac1p}}
    \asymp1,\quad 0\le r<1,
    \end{split}
    \end{equation*}
and hence \eqref{I5} together with Lemmas~\ref{lemma:S} and \ref{LemmaDeMierda} imply $I_5(g)\lesssim\|g\|_{A^p_{\widetilde{\om}}}^q\asymp\|g\|_{A^p_{\om}}^q$ for all $g\in H^\infty$.
This finishes the proof of the proposition.
\end{proof}

In order to prove the necessity part of Theorem~\ref{th:Hankelqbiggerp} some definitions are needed. For $\eta>-1$ and a radial weight $\om$, let $b_{z,\omega}^\eta=B_z^\eta/\|B_z^\eta\|_{A^p_\omega}$ for $z\in\D$, where $B_z^\eta(\z)=(1-\bar{z}\z)^{-(2+\eta)}$. For each $f\in L^1_\nu$, define
    $$
    g_{z,\omega}^\eta(\zeta)=\frac{P_\nu(\overline{f}b_{z,\omega}^\eta)(\zeta)}{b_{z,\omega}^\eta(\zeta)},\quad \z\in\D,
    $$
and note that $g_{z,\omega}^\eta$ is a well-defined analytic function in $\D$ because the standard Bergman kernel $b_{z,\omega}^\eta$ has no zeros.
If $\nu,\omega$ are weights, $\eta>-1$ and $0<p,q<\infty$, let us consider the global mean oscillation
    $$
    \|fb_{z,\omega}^\eta-\overline{g_{z,\omega}^\eta (z)}b_{z,\omega}^\eta\|_{L^q_\nu},\quad z\in\D.
    $$

\begin{proposition}\label{pr:Hankelnecessity}
Let $1<p\le q<\infty$, $f\in L^q_\nu$, $\om\in\DD$,  $\nu\in B_q$ a radial weight and $\g(z)=\g_{\om,\nu,p,q}(z)=\frac{\widehat{\nu}(z)^\frac1q(1-|z|)^{\frac1q}}{\widehat{\om}(z)^\frac1p(1-|z|)^{\frac1p}}$. If $H_f^\nu,H_{\overline{f}}^\nu:A^p_\om\to L^q_\nu$ are bounded, then there exists $\eta_0=\eta_0(\nu,\om)>-1$ such that
    \begin{equation*}
    \begin{split}
    \sup_{z\in\D}\|fb_{z,\omega}^\eta-\overline{g_{z,\omega}^\eta (z)}b_{z,\omega}^\eta\|_{L^q_\nu}
    &\le\|H_f^{\nu}\|_{A^p_\om\to L^q_\nu}+\|P_\eta\|_{L^q_\nu\to L^q_\nu}\left(\|H_f^{\nu}\|_{A^p_\om\to L^q_\nu}
    +\|H_{\overline{f}}^\nu\|_{A^p_\om\to L^q_\nu}\right).
    \end{split}
    \end{equation*}
for each $\eta\ge \eta_0$. Moreover, there exists $r_0=r_0(\nu)>0$ such that for each fixed $r\ge r_0$ and $\eta\ge \eta_0$,
    \begin{equation*}
    \begin{split}
    \sup_{z\in\D}\|fb_{z,\omega}^\eta-\overline{g_{z,\omega}^\eta (z)}b_{z,\omega}^\eta\|_{L^q_\nu}
    &\gtrsim\sup_{z\in\D}
    \g(z)\left(
   \frac{1}{\nu(\Delta(z,r))} \int_{\Delta(z,r)}|f(\z)-\overline{g_{z,\omega}^\eta(z)}|^q\nu(\z)\,dA(\z)\right)^{\frac{1}{q}}.
    \end{split}
    \end{equation*}
\end{proposition}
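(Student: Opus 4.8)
The plan is to establish the two inequalities separately, using throughout that $\nu\in B_q$ forces $\nu\in\DDD$ (Proposition~\ref{pr:bqDDD}) and that, for $\eta$ large, both the kernel projection $P_\nu$ and the standard projection $P_\eta$ are bounded on $L^q_\nu$. Abbreviate $b_z=b_{z,\om}^\eta$, $g_z=g_{z,\om}^\eta$, and set $A_z=fb_z-\overline{g_z(z)}b_z=(f-\overline{g_z(z)})b_z$. Since $\|b_z\|_{A^p_\om}=1$, the whole argument reduces the global mean oscillation $\|A_z\|_{L^q_\nu}$ to the two Hankel norms by exploiting the defining identities $H_f^\nu(b_z)=fb_z-P_\nu(fb_z)$ and, because $P_\nu(\overline f b_z)=g_zb_z$, the relation $H_{\overline f}^\nu(b_z)=(\overline f-g_z)b_z$.

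For the upper bound I would first split $A_z=H_f^\nu(b_z)+B_z$ with $B_z=P_\nu(fb_z)-\overline{g_z(z)}b_z$, which is analytic and lies in $L^q_\nu$ because $P_\nu$ is bounded there. As $B_z$ is analytic, $P_\eta B_z=B_z$, hence $B_z=P_\eta A_z-P_\eta(H_f^\nu(b_z))$. The algebraic heart is the decomposition $A_z=(f-\overline{g_z})b_z+(\overline{g_z}-\overline{g_z(z)})b_z$, whose first summand equals $\overline{H_{\overline f}^\nu(b_z)}\,b_z/\overline{b_z}$ and therefore has $L^q_\nu$-norm exactly $\|H_{\overline f}^\nu(b_z)\|_{L^q_\nu}$, while the second is annihilated by $P_\eta$. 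The vanishing rests on the reproducing identity
$$P_\eta(\overline h\,b_z)=\overline{h(z)}\,b_z,\qquad h\in\H(\D),$$
which follows from the Hermitian symmetry of the standard kernel via $P_\eta(\overline h B_z^\eta)(w)=\overline{\langle hB_w^\eta,B_z^\eta\rangle_{A^2_\eta}}=\overline{h(z)B_w^\eta(z)}=\overline{h(z)}B_z^\eta(w)$. Taking $h=g_z$ gives $P_\eta A_z=P_\eta\big((f-\overline{g_z})b_z\big)$, so $B_z=P_\eta\big((f-\overline{g_z})b_z\big)-P_\eta(H_f^\nu(b_z))$ and finally
$$\|A_z\|_{L^q_\nu}\le\|H_f^\nu(b_z)\|_{L^q_\nu}+\|P_\eta\|\big(\|H_{\overline f}^\nu(b_z)\|_{L^q_\nu}+\|H_f^\nu(b_z)\|_{L^q_\nu}\big),$$
which is the asserted estimate after bounding each Hankel output by the corresponding operator norm.

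For the lower bound I would discard everything outside $\Delta(z,r)$ and control $|b_z|$ there. For $\eta\ge\eta_0$ large, Lemma~\ref{Lemma:weights-in-D-hat}(iv) yields $\|B_z^\eta\|_{A^p_\om}\asymp\widehat{\om}(z)^{\frac1p}(1-|z|)^{\frac1p-(2+\eta)}$, and since $|1-\overline z\z|\asymp 1-|z|$ on $\Delta(z,r)$ one obtains $|b_z(\z)|\asymp\widehat{\om}(z)^{-\frac1p}(1-|z|)^{-\frac1p}$ there. Consequently
$$\|A_z\|_{L^q_\nu}^q\ge\int_{\Delta(z,r)}|f(\z)-\overline{g_z(z)}|^q|b_z(\z)|^q\nu(\z)\,dA(\z)\gtrsim\frac{1}{\widehat{\om}(z)^{\frac qp}(1-|z|)^{\frac qp}}\int_{\Delta(z,r)}|f(\z)-\overline{g_z(z)}|^q\nu(\z)\,dA(\z).$$
Combining this with $\nu(\Delta(z,r))\asymp\widehat{\nu}(z)(1-|z|)$ from \eqref{8} (valid as $\nu\in\DDD$ and $r\ge r_0$) and the definition of $\gamma$ shows $\gamma(z)^q/\nu(\Delta(z,r))\asymp\widehat{\om}(z)^{-\frac qp}(1-|z|)^{-\frac qp}$, which is precisely the prefactor above; taking the supremum over $z\in\D$ and a $q$-th root yields the claim.

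The main obstacle is the interplay of the two projections: $g_z$ is manufactured from the inexplicit kernel projection $P_\nu$, whereas all estimates are performed with the standard projection $P_\eta$, so one must know that $P_\nu$ and $P_\eta$ are simultaneously bounded on $L^q_\nu$ (Proposition~\ref{pr:bqDDD} together with Bekoll\'e--Bonami) and must justify the reproducing identity $P_\eta(\overline{g_z}b_z)=\overline{g_z(z)}b_z$ for the possibly non-$A^2_\eta$ function $g_z$, for instance by first treating bounded $f$ and passing to the limit. Choosing $\eta_0$ large enough to secure both the boundedness of $P_\eta$ on $L^q_\nu$ and the asymptotics for $\|B_z^\eta\|_{A^p_\om}$ then closes the argument.
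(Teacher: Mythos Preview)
Your argument is correct and follows essentially the same route as the paper's proof: both split off $H_f^\nu(b_z)$, use the reproducing identity $P_\eta(\overline{h}\,b_z)=\overline{h(z)}\,b_z$ to convert the remaining analytic piece into $P_\eta$ applied to something of the form $P_\nu(fb_z)-\overline{g_z}\,b_z$, and then bound this by the two Hankel norms via $|(f-\overline{g_z})b_z|=|H_{\overline f}^\nu(b_z)|$; the lower bound is likewise identical. The only minor difference is in justifying the reproducing identity for $h=g_z$: the paper argues directly that $g_z\in A^q_\nu\subset A^1_\eta$ once $\eta>\frac{\beta(\nu)}{q}-1$ (using Proposition~\ref{pr:bqDDD} and Lemma~\ref{Lemma:weights-in-D-hat}(ii)), which is cleaner than your suggested approximation by bounded symbols.
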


\begin{proof}
The definition of the Hankel operator along with triangle inequality gives
    \begin{equation*}
    \begin{split}
    \|fb_{z,\omega}^\eta-\overline{g_{z,\omega}^\eta (z)}b_{z,\omega}^\eta\|_{L^q_\nu}
    &\le\|H_f^{\nu}(b_{z,\omega}^\eta)\|_{L^q_\nu}+\|P_\nu(fb_{z,\omega}^\eta)-\overline{g_{z,\omega}^\eta(z)}b_{z,\omega}^\eta\|_{L^q_\nu}\\
    &\le\|H_f^{\nu}\|_{A^p_\om\to L^q_\nu}\|b_{z,\omega}^\eta\|_{A^p_\om}+\|P_\nu(fb_{z,\omega}^\eta)-\overline{g_{z,\omega}^\eta(z)}b_{z,\omega}^\eta\|_{L^q_\nu}\\
    &=\|H_f^{\nu}\|_{A^p_\om\to L^q_\nu}+\|P_\nu(fb_{z,\omega}^\eta)-\overline{g_{z,\omega}^\eta(z)}b_{z,\omega}^\eta\|_{L^q_\nu}.
    \end{split}
    \end{equation*}
If $g\in A^1_\eta$, then the reproducing formula for the standard weighted Bergman projection yields $\overline{g(z)}b_{z,\omega}^\eta = P_\eta(\overline{g}b_{z,\omega}^\eta)$. Since $\nu\in B_q$ is radial and $f\in L^q_\nu$, we have $\nu\in\DDD$ and $P_\nu(fb^\eta_z)\in A^q_\nu$ by Proposition~\ref{pr:bqDDD}. Therefore $g_z^\eta\in A^q_\nu$ for all $z\in\D$. Moreover, $A^q_\nu\subset A^q_\eta\subset A^1_\eta$ if $\eta>\frac{\beta(\nu)}{q}-1$ by Lemma~\ref{Lemma:weights-in-D-hat}(ii). It follows that
    $$
    \|P_\nu(fb_{z,\omega}^\eta)-\overline{g_{z,\omega}^\eta(z)}b_{z,\omega}^\eta\|_{L^q_\nu}
    =\|P_\nu(fb_{z,\omega}^\eta)-P_\eta(\overline{g_{z,\omega}^\eta}b_{z,\omega}^\eta)\|_{L^q_\nu}=\|P_\eta(P_\nu(fb_{z,\omega}^\eta)-\overline{g_{z,\omega}^\eta}b_{z,\omega}^\eta)\|_{L^q_\nu},\quad z\in\D.
    $$
By \cite{BB}, there exists $\eta_1=\eta_1(\nu)>\frac{\beta(\nu)}{q}-1$ such that $P_\eta:L^q_\nu\to L^q_\nu$ is bounded if $\eta\ge\eta_1$. Therefore
    $$
    \|P_\nu(fb_{z,\omega}^\eta)-\overline{g_{z,\omega}^\eta(z)}b_{z,\omega}^\eta\|_{L^q_\nu}
    \le\|P_\eta\|_{L^q_\nu\to L^q_\nu}\|P_\nu(fb_{z,\omega}^\eta)-\overline{g_{z,\omega}^\eta}b_{z,\omega}^\eta\|_{L^q_\nu},\quad z\in\D,\quad \eta\ge \eta_1.
    $$
The triangle inequality yields
    \begin{equation*}
    \begin{split}
    \|P_\nu(fb_{z,\omega}^\eta)-\overline{g_{z,\omega}^\eta}b_{z,\omega}^\eta\|_{L^q_\nu}
    &\le\|fb_{z,\omega}^\eta-P_\nu(fb_{z,\omega}^\eta)\|_{L^q_\nu}+\|fb_{z,\omega}^\eta-\overline{g_{z,\omega}^\eta}b_{z,\omega}^\eta\|_{L^q_\nu}
   \\ &  =\|H_f^{\nu}(b_{z,\omega}^\eta)\|_{L^q_\nu}+\|\overline{f}b_{z,\omega}^\eta-g_{z,\omega}^\eta b_{z,\omega}^\eta\|_{L^q_\nu}\\
    &\le\|H_f^{\nu}\|_{A^p_\om\to L^q_\nu}\|b_{z,\omega}^\eta\|_{A^p_\om}
    +\|\overline{f}b_{z,\omega}^\eta-P_\nu(\overline{f}b_{z,\omega}^\eta)\|_{L^q_\nu}\\
    &=\|H_f^{\nu}\|_{A^p_\om\to L^q_\nu}
    +\|H_{\overline{f}}^\nu(b_{z,\omega}^\eta)\|_{L^q_\nu}
    \le\|H_f^{\nu}\|_{A^p_\om\to L^q_\nu}
    +\|H_{\overline{f}}^\nu\|_{A^p_\om\to L^q_\nu}.
    \end{split}
    \end{equation*}
By combining the above estimates we deduce
    \begin{equation*}
    \begin{split}
    \|fb_{z,\omega}^\eta-\overline{g_{z,\omega}^\eta (z)}b_{z,\omega}^\eta\|_{L^q_\nu}
    &\le\|H_f^{\nu}\|_{A^p_\om\to L^q_\nu}+\|P_\eta\|_{L^q_\nu\to L^q_\nu}\left(\|H_f^{\nu}\|_{A^p_\om\to L^q_\nu}
    +\|H_{\overline{f}}^\nu\|_{A^p_\om\to L^q_\nu}\right),
    \end{split}
    \end{equation*}
for any $\eta\ge \eta_1(\nu)$.

To see the second one, first observe that \cite[Corollary~2]{PR2016/1} and Lemma~\ref{Lemma:weights-in-D-hat}(ii) give
    \begin{equation*}
    \begin{split}
    \|B^\eta_z\|_{A^p_\om}^p
    &\asymp\int_0^{|z|}\frac{\widehat{\om}(t)}{(1-t)^{p(2+\eta)}}\,dt
    \lesssim\frac{\widehat{\om}(|z|)}{(1-|z|)^{\beta(\om)}} \int_0^{|z|}\frac{1}{(1-t)^{p(2+\eta)-\beta(\om)}}\,dt\\
    &\asymp\frac{\widehat{\om}(z)}{(1-|z|)^{p(2+\eta)-1}},\quad |z|\to1^-,
    \end{split}
    \end{equation*}
provided $\eta>\frac{\b(\om)+1}{p}-2$. Moreover, by \eqref{8} there exists $r_0=r_0(\nu)>0$ such that $(1-|z|)\widehat{\nu}(z)\asymp \nu(\Delta(z,r_0))$ for any $r\ge r_0$. Hence, for each $r\ge r_0$ we have
    \begin{equation*}
    \begin{split}
    \|fb_{z,\omega}^\eta-\overline{g_{z,\omega}^\eta (z)}b_{z,\omega}^\eta\|_{L^q_\nu}^q
    &\ge\int_{\Delta(z,r)}|f(\z)-\overline{g_{z,\omega}^\eta(z)}|^q|b^\eta_z(\z)|^q\nu(\z)\,dA(\z)\\
    &\asymp\frac{1}{\|B_z^\eta\|_{A^p_\om}^q(1-|z|)^{q(2+\eta)}}\int_{\Delta(z,r)}|f(\z)-\overline{g_{z,\omega}^\eta(z)}|^q\nu(\z)\,dA(\z)\\
    &\asymp\frac{1}{\widehat{\om}(z)^\frac{q}{p}(1-|z|)^{\frac{q}{p}}}\int_{\Delta(z,r)}|f(\z)-\overline{g_{z,\omega}^\eta(z)}|^q\nu(\z)\,dA(\z),
    \\ & \asymp \frac{\widehat{\nu}(z)(1-|z|)}{\widehat{\om}(z)^{\frac{q}{p}}(1-|z|)^{\frac{q}{p}}}
    \frac{1}{\nu(\Delta(z,r))}\int_{\Delta(z,r)}|f(\z)-\overline{g_{z,\omega}^\eta(z)}|^q\nu(\z)dA(\z).
    \end{split}
    \end{equation*}
The second claim for $\eta_0=\max\{\eta_1,\frac{\b(\om)+1}{p}-2\}$ is now proved.
\end{proof}

\begin{Prf}{\em{ of Theorem~\ref{th:Hankelqbiggerp}.}}
If $H_{f}^\nu,H_{\overline{f}}^\nu:A^p_\om\to L^q_\nu$ are bounded, then $f\in\BMO(\Delta)$ by Proposition~\ref{pr:Hankelnecessity} and Theorem~\ref{th:BMOdecom}.

Conversely, let $f\in\BMO(\Delta)$. Then $f$ can be decomposed as $f=f_1+f_2$, where $f_1\in\BA(\Delta)$ and $f_2\in\BO(\Delta)$, by Theorem~\ref{th:BMOdecom}(ii). Proposition~\ref{pr:SufBopart} shows that $H_{f_2}^\nu,H_{\overline{f_{2}}}^\nu:A^p_\om\to L^q_\nu$ are bounded.
Moreover, since $\nu\in B_q$ is radial, $\nu\in\DDD$ and $P_\nu: L^q_\nu\to L^q_\nu$ is bounded by Proposition~\ref{pr:bqDDD}.
Therefore Lemma~\ref{ba} yields
    $$
    \| H_{f_1}^\nu(g)\|^q_{L^q_\nu}
    \le\|f_1g\|_{L^q_\nu}+ \|P_\nu(f_1g)\|_{L^q_\nu}
    \lesssim\|f_1g\|_{L^q_\nu}
    \lesssim\|g\|_{A^p_\om}\quad g\in H^\infty.
    $$
It follows that $H_{f}^\nu,H_{\overline{f}}^\nu:A^p_\om\to L^q_\nu$ are bounded.
\end{Prf}

\section{Anti-analytic symbols}
\label{Anti-analytic symbols}

Recall that the space $\Bg$ consists of $f\in\H(\D)$ such that
    $$
    \|f\|_{\Bg}=\sup_{z\in\D}|f'(z)|(1-|z|)\gamma(z)+|f(0)|<\infty,
    $$
where $\g(z)=\frac{\widehat{\nu}(z)^\frac1q(1-|z|)^{\frac1q}}{\widehat{\om}(z)^\frac1p(1-|z|)^{\frac1p}}$ for all $z\in\D$.

\begin{proposition}\label{pr:ana}
Let $1<p\le q<\infty$, $\om,\nu\in\DDD$ and $r\ge r_0$, where $r_0=r_0(\nu)>0$ is that of Theorem~\ref{th:BMOdecom}(i). Then $\BMO(\Delta)\cap\H(\D)=\BMO(\Delta)_{\om,\nu,p,q,r}\cap\H(\D)=\Bg$.
\end{proposition}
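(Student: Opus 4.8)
The plan is to establish the two equalities separately. The first, $\BMO(\Delta)\cap\H(\D)=\BMO(\Delta)_{\om,\nu,p,q,r}\cap\H(\D)$, is just the $r$-independence of $\BMO(\Delta)$ for $r\ge r_0$, already established (cf.\ Theorem~\ref{th:BMOdecom}, whose conditions for $r\ge r_0$ are equivalent to the $r$-free condition (iv)). So I would fix any $r\ge r_0$ and work with $\BMO(\Delta)=\BMO(\Delta)_{\om,\nu,p,q,r}$. The substantial claim is the second equality, and for $f\in\H(\D)$ it amounts to the norm comparison
$$\sup_{z\in\D}\gamma(z)\,\MO_{\nu,q,r}(f)(z)\asymp\sup_{z\in\D}\gamma(z)(1-|z|)|f'(z)|;$$
the term $|f(0)|$ in $\|\cdot\|_{\Bg}$ is automatically finite for analytic $f$ and only promotes the right-hand seminorm to a norm, so it plays no role in the set-theoretic identity. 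Throughout I would use that $\om,\nu\in\DDD$, whence $\gamma(u)\asymp\gamma(z)$ whenever $\beta(u,z)\le r$ (Lemma~\ref{Lemma:weights-in-D-hat}(ii)) and $\nu(\Delta(z,r))\asymp\widehat{\nu}(z)(1-|z|)$ by \eqref{8}.

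For the inclusion $\Bg\subset\BMO(\Delta)\cap\H(\D)$ I would bound the mean oscillation by the pointwise oscillation. By Lemma~\ref{le:BMOr} with $\lambda=f(z)$,
$$\MO_{\nu,q,r}(f)(z)\le 2\left(\frac{1}{\nu(\Delta(z,r))}\int_{\Delta(z,r)}|f(\z)-f(z)|^q\nu(\z)\,dA(\z)\right)^{1/q}\le 2\sup_{\beta(\z,z)<r}|f(\z)-f(z)|.$$
Estimating the last quantity by integrating $f'$ along the hyperbolic geodesic from $z$ to $\z$, and writing $C=\sup_{u}\gamma(u)(1-|u|)|f'(u)|$, one gets $|f(\z)-f(z)|\lesssim\int_{\mathrm{geo}}|f'(u)|(1-|u|)\,d\beta(u)\le C\int_{\mathrm{geo}}\frac{d\beta(u)}{\gamma(u)}\lesssim\frac{C\,\beta(z,\z)}{\gamma(z)}\lesssim\frac{C\,r}{\gamma(z)}$, where the arc-length comparison $|du|\asymp(1-|u|)\,d\beta(u)$ and $\gamma(u)\asymp\gamma(z)$ on $\Delta(z,r)$ are used. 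Multiplying by $\gamma(z)$ and taking the supremum yields $\|f\|_{\BMO(\Delta)}\lesssim\|f\|_{\Bg}$.

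The reverse inclusion $\BMO(\Delta)\cap\H(\D)\subset\Bg$ requires the pointwise lower bound $(1-|z|)|f'(z)|\lesssim\MO_{\nu,q,r}(f)(z)$; after multiplying by $\gamma(z)$ and taking suprema this gives $\|f\|_{\Bg}\lesssim\|f\|_{\BMO(\Delta)}$. I would recover $f'(z)$ by a Bergman-type reproducing identity on a Euclidean disc $D=D(z,\rho)$ with $\rho\asymp 1-|z|$ and $D\subset\Delta(z,r)$: for any $\lambda\in\C$, orthogonality of the monomials gives $f'(z)=c\rho^{-4}\int_D(f(\z)-\lambda)\overline{(\z-z)}\,dA(\z)$, whence by H\"older $(1-|z|)|f'(z)|\lesssim\big(\frac{1}{A(D)}\int_D|f-\lambda|^q\,dA\big)^{1/q}$. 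Taking $\lambda=\fgrnu(z)$, it then remains to dominate this Lebesgue average over $D$ by the $\nu$-average over $\Delta(z,r)$.

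The main obstacle is precisely this last comparison, since $\nu\in\DDD$ may vanish on large sets and need not be bounded below on $D(z,\rho)$. This is where the \emph{radiality} of $\nu$ is indispensable: along each circular arc meeting $\Delta(z,r)$ the weight $\nu$ is constant, and the nondegeneracy of $\nu$-mass guaranteed by \eqref{8} for nested radii (which rules out concentration of $\nu(\Delta(z,r))$ on the extreme ``edge'' radii of $\Delta(z,r)$) forces the mass to spread over a tangential extent $\asymp 1-|z|$. Combined with the subharmonicity of $|f-\fgrnu(z)|^q$, this yields $\frac{1}{A(D)}\int_D|f-\fgrnu(z)|^q\,dA\lesssim\frac{1}{\nu(\Delta(z,r))}\int_{\Delta(z,r)}|f-\fgrnu(z)|^q\nu\,dA$ and completes the lower bound. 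I expect the careful bookkeeping of this radial spread, uniformly in $z$, to be the only genuinely delicate point; the two reductions above are routine consequences of the $\DDD$-machinery already developed.
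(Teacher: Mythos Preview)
Your argument for $\Bg\subset\BMO(\Delta)\cap\H(\D)$ is correct and in fact more elementary than the paper's: the paper verifies the Berezin-transform condition of Theorem~\ref{th:BMOdecom}(iv) with $\lambda_z=f(z)$ via integral estimates and an external lemma, whereas your geodesic integration together with $\gamma(u)\asymp\gamma(z)$ on $\Delta(z,r)$ gives the local oscillation bound directly.

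The reverse inclusion, however, has a genuine gap. The pointwise comparison you aim for,
\[
\frac{1}{A(D)}\int_{D}|f-\lambda|^q\,dA\ \lesssim\ \frac{1}{\nu(\Delta(z,r))}\int_{\Delta(z,r)}|f-\lambda|^q\,\nu\,dA,
\]
fails for general radial $\nu\in\DDD$, and neither radiality nor subharmonicity rescues it. Take $\nu$ supported on thin annuli near $t_n=1-2^{-n}$ (so $\widehat\nu(t)\asymp 1-t$ and $\nu\in\DDD$), fix $z$ in a gap with $1-|z|\asymp 2^{-n}$, and let $a$ lie on the arc $\text{supp}(\nu)\cap\Delta(z,r)$ with $\arg a=\arg z$, so $|z-a|\asymp 1-|z|$. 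For $g(w)=(w-a)^N$ and $\lambda=0$, the Cauchy estimate forces the Lebesgue average over $D(z,\rho)$ to satisfy $(\text{Leb-avg})^{1/q}\gtrsim\rho|g'(z)|\asymp N(1-|z|)^{N}$, while on the arc $|w-a|$ ranges only over $[0,C(1-|z|)]$, giving $(\nu\text{-avg})^{1/q}\asymp(1-|z|)^{N}/N^{1/q}$. The ratio is $\asymp N^{1+1/q}$, unbounded in $N$. (If several thin annuli meet $\Delta(z,r)$, place an $N$-fold zero on each; the same blow-up occurs.) Thus the pointwise inequality $(1-|z|)|f'(z)|\lesssim\MO_{\nu,q,r}(f)(z)$ is false in general, and your route cannot close.

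The paper avoids this obstacle entirely. From $f\in\BMO(\Delta)$ one has the Berezin condition of Theorem~\ref{th:BMOdecom}; since $f$ is analytic, the integrand is $|h_z(\zeta)|^q(1-|\zeta|)^\sigma\nu(\zeta)$ with $h_z$ holomorphic, and Lemma~\ref{LemmaDeMierda} replaces $\nu$ by $\widehat\nu/(1-|\cdot|)$ in that integral. Because $\widehat\nu$ (unlike $\nu$) is essentially constant on $\Delta(z,r)$, restricting the integral to $\Delta(z,r)$ yields the unweighted Lebesgue average $\frac{\gamma(z)^q}{|\Delta(z,r)|}\int_{\Delta(z,r)}|f-\lambda_z|^q\,dA$, from which the Cauchy/reproducing argument you describe legitimately extracts $(1-|z|)|f'(z)|$. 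The missing idea in your proof is precisely this passage $\nu\rightsquigarrow\widehat\nu$ via Lemma~\ref{LemmaDeMierda}, enabled by the analyticity of $f$; once you have the $\widehat\nu$-weighted (equivalently Lebesgue) local average, your final step goes through.
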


\begin{proof}
Let first $f\in\Bg$. By Theorem~\ref{th:BMOdecom}(iv) to deduce $f\in\BMO(\Delta)$ it is enough to prove
    \begin{equation*}
    \sup_{z\in\D}\frac{(1-|z|)^{c+1}\gamma(z)^q}{\widehat{\nu}(z)}
    \int_\D|f(\z)-f(z)|^q\frac{(1-|\z|^2)^\sigma}{|1-z\overline{\z}|^{2+c+\sigma}}\nu(\z)\,dA(\z)<\infty
    \end{equation*}
for some  $\sigma>0$ and
    \begin{equation}
    \begin{split}\label{numbers}
    c>2\frac{q}{p}\left(\b(\om)+1\right)+\sigma+\max\left\{2\b(\nu),\gamma(\nu)\right\}.
    \end{split}
    \end{equation}
Since $f\in \H(\D)$, the function $(f(\z)-f(z))(1-\z\overline{z})^{-\frac{2+c+\sigma}{q}}$ is an analytic function in $\z$ for each $z\in\D$. Therefore Lemma~\ref{LemmaDeMierda} shows that \eqref{eq:hol1} is equivalent to
    \begin{equation}\label{eq:hol1}
    \sup_{z\in\D}\frac{(1-|z|)^{c+1}\gamma(z)^q}{\widehat{\nu}(z)}
    \int_\D |f(\z)-f(z)|^q\frac{(1-|\z|^2)^{\sigma-1}}{|1-z\overline{\z}|^{2+c+\sigma}}\widehat{\nu}(\z)\,dA(\z)<\infty.
    \end{equation}
Further, Lemma~\ref{Lemma:weights-in-D-hat}(ii) yields
    \begin{equation*}
    \begin{split}
    &\frac{(1-|z|)^{c+1}\gamma(z)^q}{\widehat{\nu}(z)}
    \int_\D |f(\z)-f(z)|^q\frac{(1-|\z|^2)^{\sigma-1}}{|1-z\overline{\z}|^{2+c+\sigma}}\widehat{\nu}(\z)\,dA(\z)\\
    &\lesssim
    (1-|z|)^{c+1}\gamma(z)^q\int_{\D\setminus D(0,|z|)}|f(\z)-f(z)|^q\frac{(1-|\z|^2)^{\sigma-1}}{|1-z\overline{\z}|^{2+c+\sigma}}\,dA(\z)\\
    &\quad+(1-|z|)^{c+1-\beta(\nu)}\gamma(z)^q
    \int_{D(0,|z|)}|f(\z)-f(z)|^q\frac{(1-|\z|^2)^{\sigma+\beta(\nu)-1}}{|1-z\overline{\z}|^{2+c+\sigma}}\,dA(\z)\\
    &\le(1-|z|)^{c+1}\gamma(z)^q\int_{\D}|f(\z)-f(z)|^q\frac{(1-|\z|^2)^{\sigma-1}}{|1-z\overline{\z}|^{2+c+\sigma}}\,dA(\z)\\
    &\quad+(1-|z|)^{c+1-\beta(\nu)}\gamma(z)^q\int_{\D}|f(\z)-f(z)|^q \frac{(1-|\z|^2)^{\sigma+\beta(\nu)-1}}{|1-z\overline{\z}|^{2+c+\sigma}}\,dA(\z)\\
    &=I_1(z)+I_2(z),\quad z\in\D.
    \end{split}
    \end{equation*}
Fix $\sigma>\max\left\{0,1-\frac{q}{p}(1+\a(\om)) +q\b(\nu) \right\}$ and $c$ satisfying \eqref{numbers}. Then
    $$
    c>\max\left \{ \b(\nu)-1, -2+\b(\nu)+\frac{q}{p}(1+\b(\om))-q\a(\nu)\right\}.
    $$
Therefore, \cite[Lemma~7]{PZMathAnn15} together with Lemmas~\ref{Lemma:weights-in-D-hat}(ii) and~\ref{Lemma:weights-in-D} gives
    \begin{equation*}
    \begin{split}
    I_1(z) &\lesssim (1-|z|)^{c+1}\gamma(z)^q \int_{\D} |f'(\z)|^q \frac{(1-|\z|^2)^{\sigma+q-1}}{|1-z\overline{\z}|^{2+c+\sigma}}\,dA(\z)\\
    &\lesssim\|f\|^q_{\Bg}(1-|z|)^{c+1}\gamma(z)^q \int_{\D} \gamma(\z)^{-q} \frac{(1-|\z|^2)^{\sigma-1}}{|1-z\overline{\z}|^{2+c+\sigma}}\,dA(\z)\\
    &\asymp\|f\|^q_{\Bg}(1-|z|)^{c+1}\gamma(z)^q
    \int_0^1\frac{\widehat{\om}(s)^{\frac{q}{p}}}{\widehat{\nu}(s)}\frac{(1-s)^{\frac{q}{p}+\sigma-2}}{(1-s|z|)^{1+\sigma+c}}\,ds\\
    &\lesssim\|f\|^q_{\Bg}(1-|z|)^{c+1}\gamma(z)^q\frac{\widehat{\om}(z)^{\frac{q}{p}}(1-|z|)^{\a(\nu)}}
    {(1-|z|)^{\frac{q}{p}\beta(\om)}\widehat{\nu}(z)}
    \int_0^{|z|}\frac{ds}{(1-s)^{3+c-\frac{q}{p}-\frac{q}{p}\beta(\om)+\a(\nu)}}\\
    &\quad+\|f\|^q_{\Bg}(1-|z|)^{-\sigma}\gamma(z)^q
    \frac{\widehat{\om}(z)^{\frac{q}{p}}(1-|z|)^{\b(\nu)}}{(1-|z|)^{\frac{q}{p}\a(\om)}\widehat{\nu}(z)}
    \int_{|z|}^{1}(1-s)^{\frac{q}{p}+\sigma-2+\frac{q}{p}\a(\om)-\b(\nu)}\,ds\\
    &\lesssim\|f\|^q_{\Bg}(1-|z|)^{-1+\frac{q}{p}}\gamma(z)^q\frac{\widehat{\om}(z)^{\frac{q}{p}}}{\widehat{\nu}(z)}
    \asymp\|f\|^q_{\Bg}<\infty,\quad z\in\D,
    \end{split}
    \end{equation*}
and
    \begin{equation*}
    \begin{split}
    I_2(z)
    &\lesssim(1-|z|)^{c+1-\b(\nu)}\gamma(z)^q\int_{\D}|f'(\z)|^q\frac{(1-|\z|^2)^{\sigma+\b(\nu)+q-1}}{|1-z\overline{\z}|^{2+c+\sigma}}\,dA(\z)\\
    &\lesssim\|f\|^q_{\Bg}(1-|z|)^{c+1-\b(\nu)}\gamma(z)^q
    \int_{\D}\gamma(\z)^{-q} \frac{(1-|\z|^2)^{\b(\nu)+\sigma-1}}{|1-z\overline{\z}|^{2+c+\sigma}}\,dA(\z)\\
    &\asymp\|f\|^q_{\Bg}(1-|z|)^{c+1-\b(\nu)}\gamma(z)^q
    \int_0^1\frac{\widehat{\om}(s)^{\frac{q}{p}}}{\widehat{\nu}(s)}\frac{(1-s)^{\b(\nu)+\frac{q}{p}+\sigma-2}}{(1-s|z|)^{1+\sigma+c}}\,ds\\
    &\lesssim\|f\|^q_{\Bg}(1-|z|)^{c+1-\b(\nu)}\gamma(z)^q\frac{\widehat{\om}(z)^{\frac{q}{p}}(1-|z|)^{\a(\nu)}}{(1-|z|)^{\frac{q}{p}\beta(\om)}\widehat{\nu}(z)^q}
    \int_0^{|z|}\frac{ds}{(1-s)^{3+c-\b(\nu)-\frac{q}{p}-\frac{q}{p}\beta(\om)+\a(\nu)}}\\
    &\quad+\|f\|^q_{\Bg}(1-|z|)^{-\sigma-\b(\nu)}\gamma(z)^q
    \frac{\widehat{\om}(z)^{\frac{q}{p}}(1-|z|)^{\b(\nu)}}{(1-|z|)^{\frac{q}{p}\a(\om)}\widehat{\nu}(z)}
    \int_{|z|}^{1}(1-s)^{\b(\nu)+\frac{q}{p}+\sigma-2+\frac{q}{p}\a(\om)-\b(\nu)}\,ds\\
    &\lesssim\|f\|^q_{\Bg}(1-|z|)^{-1+\frac{q}{p}}\gamma(z)^q\frac{\widehat{\om}(z)^{\frac{q}{p}}}{\widehat{\nu}(z)^q}
    \asymp\|f\|^q_{\Bg}<\infty,\quad z\in\D.
    \end{split}
    \end{equation*}
By combining these estimates we deduce $f\in\BMO(\Delta)$, and thus $\Bg\subset \H(\D)\cap \BMO(\Delta)$.

Assume now that $f\in\H(\D)\cap \BMO(\Delta)$. Then \eqref{eq:hol1} holds for some $\sigma>1$ and $c$ satisfying \eqref{numbers}. Therefore \eqref{8} implies
    \begin{equation*}
    \begin{split}
    \infty &>\sup\frac{(1-|z|)^{c+1}\gamma(z)^q}{\widehat{\nu}(z)}\int_\D |f(\z)-f(z)|^q \frac{(1-|\z|^2)^{\sigma-1}}{|1-z\overline{\z}|^{2+c+\sigma}}\widehat{\nu}(\z)dA(\z)\\
    &\gtrsim \frac{\gamma(z)^q}{(1-|z|)^2 \widehat{\nu}(z)}\int_{\Delta(z,r)} |f(\z)-f(z)|^q \widehat{\nu}(\z)\,dA(\z)\\
    &\asymp \frac{\gamma(z)^q}{|\Delta(z,r)|}\int_{\Delta(z,r)} |f(\z)-f(z)|^q \,dA(\z),\quad z\in\D.
    \end{split}
    \end{equation*}
By arguing as in \cite[1653--1654]{PZZ} we deduce $\H(\D)\cap\BMO(\Delta)\subset\Bg$.
\end{proof}

The space $\Bg$ consists of constant functions only if $\limsup_{|z|\to 1^-}((1-|z|)\gamma(|z|))^{-1}=0$.
Moreover, $\Bg$ is a subset of the disc algebra if $((1-x)\gamma(x))^{-1}\in L^1(0,1)$, and $\Bg$ coincides with a Bloch-type space if $\gamma$ is decreasing.

\medskip

\begin{Prf}{\em{ of Theorem~\ref{th:analytic}.}}
Since $f\in A^1_\nu$, the operator $H^{\nu}_{\overline{f}}$ is densely defined. If $H^{\nu}_{\overline{f}}:A^p_\om \to L^q_\nu$ is bounded, choosing $g\equiv1$ it follows that $f\in A^q_\nu$, and therefore $f\in\Bg$ by Theorem~\ref{th:Hankelqbiggerp} and Proposition~\ref{pr:ana}.

Conversely, assume $f\in\Bg$. Since $\nu\in B_q$ is radial, Proposition~\ref{pr:bqDDD} implies $\nu\in\DDD$. Therefore
Lemmas~\ref{Lemma:weights-in-D-hat}(ii) and~\ref{Lemma:weights-in-D} yield
    \begin{equation*}
    \begin{split}
    \|f\|^q_{A^q_\nu}
    &\lesssim\int_{\D}\left(\int_0^{|z|}\left|f'\left(s\frac{z}{|z|}\right)\right|\,ds \right)^q\nu(z)\,dA(z)+|f(0)|^q\\
    &\lesssim\|f\|^q_{\Bg}\left(1+\int_0^1\left(\int_0^t \frac{ds}{(1-s)\gamma(s)}\right)^q\nu(t)\,dt\right)\\
    &\lesssim\|f\|^q_{\Bg}\left(1+\int_0^1
    \left(\int_0^t\frac{\widehat{\om}(s)^{\frac1p}}{\widehat{\nu}(s)^{\frac1q}(1-s)^{1+\frac{1}{q}-\frac{1}{p}}}\,ds\right)^q\nu(t)\,dt\right)\\
    &\lesssim \|f\|^q_{\Bg}\left(1+\int_0^1\frac{\widehat{\om}(t)^{\frac{q}{p}}}{\widehat{\nu}(t)(1-t)^{\frac{q\a(\om)}{p}-\b(\nu)}}
    \left(\int_0^t \frac{ds}{(1-s)^{1+\frac{1+\b(\nu)}{q}-\frac{1+\a(\om)}{p}}}\right)^q\nu(t)\,dt\right)
    \end{split}
    \end{equation*}
for all $f\in\H(\D)$. If $\frac{1+\b(\nu)}{q}-\frac{1+\a(\om)}{p}>0$, Lemma~\ref{lemma:d-hat-new}(ii) gives
    \begin{equation*}
    \begin{split}
    \|f\|^q_{A^q_\nu}
    &\lesssim\|f\|^q_{\Bg}\left(1+\int_0^1\frac{\widehat{\om}(t)^{\frac{q}{p}}(1-t)^{\frac{q}{p}-1}}{\widehat{\nu}(t)}\nu(t)\,dt\right)\\
    &\lesssim\|f\|^q_{\Bg}\left(1+\widehat{\om}(0)^{\frac{q}{p}-1}\int_0^1\frac{\widehat{\om}(t)\nu(t)}{\widehat{\nu}(t)}\,dt\right)
    \lesssim\|f\|^q_{\Bg}.
    \end{split}
    \end{equation*}
If $\frac{1+\b(\nu)}{q}-\frac{1+\a(\om)}{p}=0$, then Lemmas~\ref{Lemma:weights-in-D} and ~\ref{lemma:d-hat-new}(ii) yield
    \begin{equation*}
    \begin{split}
    \|f\|^q_{A^q_\nu}
    &\lesssim\|f\|^q_{\Bg}\left(1+\int_0^1
    \frac{\widehat{\om}(t)^{\frac{q}{p}}(1-t)^{\frac{q}{p}-1}}{\widehat{\nu}(t)}\log\frac{e}{1-t}\nu(t)\,dt\right)\\
    &\lesssim\|f\|^q_{\Bg}\widehat{\om}(0)^{\frac{q}{p}}
    \int_0^1\frac{(1-t)^{\a(\om)\frac{q}{p}+\frac{q}{p}-1}}{\widehat{\nu}(t)}\log\frac{e}{1-t}\nu(t)\,dt\\
    &\lesssim\|f\|^q_{\Bg}\int_0^1\frac{(1-t)^{\a(\om)\frac{q}{2p}+\frac{q}{p}-1}}{\widehat{\nu}(t)}\nu(t)\,dt
    \lesssim\|f\|^q_{\Bg}.
    \end{split}
    \end{equation*}
Finally, if $\frac{1+\b(\nu)}{q}-\frac{1+\a(\om)}{p}<0$, then Lemma~\ref{lemma:d-hat-new}(ii) gives
    \begin{equation*}
    \begin{split}
    \|f\|^q_{A^q_\nu}
    &\lesssim \|f\|^q_{\Bg}\left(1+\int_0^1\frac{\widehat{\om}(t)^{\frac{q}{p}}(1-t)^{\frac{q}{p}-1}}{\widehat{\nu}(t)}\nu(t)\,dt\right)
    \lesssim\|f\|^q_{\Bg}.
    \end{split}
    \end{equation*}
Therefore $f\in A^q_\nu$, and thus $\Bg\subset A^q_\nu$. This together with Theorem~\ref{th:Hankelqbiggerp} and Proposition~\ref{pr:ana} finishes the proof.
\end{Prf}

\bibliographystyle{amsplain}

\begin{thebibliography}{00}

\bibitem{AFP88}         J.~Arazy, S.~D.~Fisher and J.~Peetre,  Hankel operators on weighted Bergman spaces,
                        Amer. J. Math. 110 (1988), no. 6, 989--1053.

\bibitem{Arrousi}       H.~Arrousi, Function and Operator theory on Large Bergman spaces,
                        PhD. Thesis, Univ. of Barcelona (2016).

\bibitem{AxlerDuke}     S.~Axler, The Bergman space, the Bloch space, and commutators of multiplication operators,
                        Duke Math. J. 53 (1986), no. 2, 315--332.

\bibitem{Bek}           D.~Bekoll\'e,
                        In\'egalit\'es \'a poids pour le projecteur de Bergman dans la boule unit\'e de
                        $C^n$,
                        [Weighted inequalities for the Bergman projection in the unit ball of $C^n$]
                        Studia Math. 71 (1981/82), no. 3, 305--323.

\bibitem{BB}            D.~Bekoll\'e and A.~Bonami,
                        In\'egalit\'es \'a poids pour le noyau de
                        Bergman,
                        (French) C. R. Acad. Sci. Paris Sér. A-B 286 (1978), no. 18, 775--778.


\bibitem{BBCZ}          D.~Bekoll\'e,  C.~Berger, L.~A.~Coburn and K.~Zhu,
                        BMO in the Bergman metric on bounded symmetric domains,
                        J. Funct. Anal. 93 (1990), no. 2, 310--350.

\bibitem{BCZ1}          C.~Berger, L.~A.~Coburn and K.~Zhu,
                        BMO on the Bergman spaces of the classical domains,
                        Bull. Amer. Math. Soc. (N.S.) 17 (1987), no. 1, 133--136.

\bibitem{BCZ2}          C.~Berger, L.~A.~Coburn and K.~Zhu,
                        Function theory on Cartan domains and the Berezin-Toeplitz symbol calculus,
                        Amer. J. Math. 110 (1988), no. 5, 921--953.

\bibitem{CP2}           O.~Constantin and J.~A.~Pel\'aez,
                        Boundedness of the Bergman projection on $L^p$-spaces with exponential weights,
                        Bull. Sci. Math. 139 (2015), 245--268.

\bibitem{D1}            M.~R. Dostani{\'c},
                        Unboundedness of the Bergman projections on $L^p$-spaces with exponential weights,
                        Proc. Edinb. Math. Soc. (2) 47 (2004), 111--117.

\bibitem{GalPauAASF}    P.~Galanopoulos and J.~Pau,
                        Hankel operators on large weighted Bergman spaces,
                        Ann. Acad. Sci. Fenn. Math. 37 (2012), no. 2, 635--648.

\bibitem{PZMathAnn15}   J.~Pau and R.~Zhao,
                        Weak factorization and Hankel forms for weighted Bergman spaces on the unit ball,
                        Math. Ann. 363 (2015), no. 1-2, 363--383.

\bibitem{PZZ}           J.~Pau, R.~Zhao and K.~Zhu,
                        Weighted BMO and Hankel operators between Bergman spaces,
                        Indiana Univ. Math. J. 65 (2016), no. 5, 1639--1673.

\bibitem{PelSum14}      J~A.~ Pel\'aez,
                        Small weighted Bergman spaces,
                        Proceedings of the summer school in complex and harmonic analysis, and related topics, (2016).

\bibitem{PR2014Memoirs} J.~A.~Pel\'aez and J.~R\"atty\"a,
                        Weighted Bergman spaces induced by rapidly increasing weights,
                        Mem.~Amer.~Math.~Soc.~227 (2014).


\bibitem{PR2015/2}      J.~A.~Pel\'aez and J.~R\"atty\"a,
                        Embedding theorems for Bergman spaces via harmonic analysis,
                        Math. Ann. 362 (2015), no. 1-2, 205--239.

\bibitem{PR2016/1}      J.~A.~Pel\'aez and J.~R\"atty\"a,
                        Two weight inequality for Bergman projection,
                        J. Math. Pures Appl. 105 (2016), 102--130.

\bibitem{PRS18}         J.~A.~Pel\'aez, K.~Sierra and J.~R\"atty\"a, Atomic decomposition and Carleson measures for weighted
                          Mixed norm spaces, https://arxiv.org/abs/1709.07239

\bibitem{Peralakernels} A.~Per\"al\"a,
                        Vanishing Bergman Kernels on the Disk,
                        J. Geom. Anal. 28 (2018), no. 2, 1716--1727.


\bibitem{ZhaoIEOT}      R.~Zhao,
                        Generalization of Schur's test and its application to a class of integral operators on the unit ball of $\mathbb{C}^n$,
                        Integral Equations Operator Theory 82 (2015), no. 4, 519--532.

\bibitem{ZZMF08}        R.~Zhao and K.~Zhu,  Theory of Bergman spaces in the unit ball of $\mathbb{C}^n$,
                        Mem. Soc. Math. Fr. (N.S.) no. 115 (2008), vi+103 pp. (2009).


\bibitem{ZhuTAMS87}     K.~Zhu,
                        VMO, ESV, and Toeplitz operators on the Bergman space,
                        Trans. Amer. Math. Soc. 302 (1987), no. 2, 617--646.

\bibitem{Zhu1992}       K. Zhu,
                        BMO and Hankel operators on Bergman spaces,
                        Pacific J. Math. 155 (1992), no. 2, 377--395.

\bibitem{Zhu}           K.~Zhu,
                        Operator Theory in Function Spaces,
                        Second Edition, Math. Surveys and Monographs, Vol. 138, American Mathematical
                        Society: Providence, Rhode Island, 2007.

\end{thebibliography}

\end{document}